\newtheorem{teorema}{Theorem}[section] % 1st argument is your name for it
\newtheorem{lema}[teorema]{Lemma}     % 2nd argument is what is printed
\newtheorem{corollary}[teorema]{Corollary}
\newtheorem{proposition}[teorema]{Proposition}
\newtheorem{proposicion}[teorema]{Proposition}
\newtheorem{definition}[teorema]{Definition}
\newtheorem{definicion}[teorema]{Definition}
\newtheorem{example}[teorema]{Example}
\newtheorem{remark}[teorema]{Remark}
\newcommand{\g}{\gamma}
\newcommand{\ga}{\gamma}
\title[]{Arithmetic progressions and its applications \\
to $(m,q)$-isometries: a survey}
\author[]{Teresa Berm\'udez} %\email{tbermude@ull.es}
\author[]{Antonio Martin\'on} %\email{anmarce@ull.es}
\author[]{Juan Agust\'in Noda \\ University of La Laguna} \email{tbermude@ull.es, anmarce@ull.es, joannoda@gmail.com}
\address{Departamento de An\'{a}lisis Matem\'{a}tico\\   Universidad de La Laguna\\   38271 La Laguna (Tenerife), Spain}
\date{\today}
\keywords{$(m,q)$-isometry, $m$-isometry, $n$-nilpotent operator, arithmetic progression}
\subjclass[2010]{40-02, 47-02, 47B99, 54E40}
\begin{document}

\maketitle

%%%%%%%%%%%%%%%%%%%%%%%%%%%%%%%%%%%%

\begin{abstract}

In this paper we collect some results about arithmetic progressions of higher order, also called polynomial sequences. Those results are applied to $(m,q)$-isometric maps.

\end{abstract}

%%%%%%%%%%%%%%%%%%%%%%%%%%%%%%%%%%%%

\;

\;

%%%%%%%%%%%%%%%%%%%%%%%%%%%%%%%%%%%%%%%%%%%%%%%%%%%%%%%%%%%%%%%%%%%%%%%%%%%%%%%%%%%%%%%%%%
%%%%%%%%%%%%%%%%%%%%%%%%%%%%%%%%%%%%

\tableofcontents

%%%%%%%%%%%%%%%%%%%%%%%%%%%%%%%%%%%%%%%%%%%%%%%%%%%%%%%%%%%%%%%%%%%%%%%%%%%%%%%%%%%%%%%%%%

\;

\;

\section{INTRODUCTION}

In this paper we collect some results about arithmetic progressions of higher order, also called polynomial sequences. The papers of J. Alonso \cite{alonso} and V. Dlab \cite{dlab}  are dedicated to this topic.

We consider arithmetic progressions on commutative groups. A sequence $(a_n)_{n \geq 0}$ in a group $G$ is an {\it arithmetic progression of order $h$} if
$$
\sum_{k=0}^{h+1} (-1)^{h+1-k} {h \choose k}  a_{n+k} = 0 \; ,
$$
for any integer $n \geq 0$; equivalently, there exists a polynomial $p_a$, with coefficients in $G$ of degree less or equal to $h$, such that $p_a (n) = a_n$, for any $n \geq 0$; that is, there are $\ga_{h}, \ga_{h-1}, ..., \ga_{1}, \ga_{0}$ in $G$ such that
$$
a_n = \ga_{h} n^{h} + \ga_{h-1} n^{h-1} + \cdots + \ga_{2} n^{2} + \ga_{1} n + \ga_0 \; ,
$$
for any $n \geq 0$. An arithmetic progression of order $h$ is of {\it strict order $h$} if $h=0$ or if $h >1$ and it is not of order $h-1$.

We pay attention in certain aspects of the theory of arithmetic progressions which are related with $(m,q)$-isometric maps.

J. Agler \cite{agler} introduced the notion of $m$-isometry
for a positive integer $m$: an operator $T$ acting on a Hilbert space $H$ is an {\it $m$-isometry} if
$$
\sum_{k=0}^m (-1)^{m-k} {m\choose k}  T^{*k} T^{k} = 0 \; ,
$$
where $T^{*}$ denotes the adjoint operator of $T$. In  \cite{as1}, \cite{as2} and \cite{as3} the $m$-isometries are intensively studied.

The above definition of $m$-isometry is equivalent to that $(T^{*n}T^n)_{n \geq 0}$ is an arithmetic progression of order $m-1$ in the algebra $L(H)$ of all (bounded linear) operators on $H$. Several authors have extended the concept of $m$-isometry to the setting of Banach spaces. For more details see \cite{bayart}, \cite{bot}, \cite{hms} and \cite {sid ahmed}. In \cite{bmmu} it was extended to metric spaces.

Now we summarize the contain  of this paper.

\;

{\bf Section 2.}  We consider sequences in the setting of groups. First we study the {\it difference operator} $D$ which acts on a sequence $a = (a_n)_{n \geq 0}$ in $G$ in the following way:
$$
D a = (a_1 - a_0 , a_2 - a_1 , a_3 - a_2 ... ) \; .
$$
The expression of the general term of the sequence $D^h a$, where $D^h$ is the $h$ power of $D$ is given by
$$
D^h a_n = \sum_{k=0}^h (-1)^{h-k} {h \choose k} a_{n+k} \; .
$$

We collect some  combinatorial results which are necessary and obtain an expression of the general term of any sequence $a$: for $n=0,1,2,3...$,
$$
a_n = \sum_{k=0}^n  {n \choose k} D^k a_{0} \; .
$$

\;

{\bf Section 3.} If $a$ is an arithmetic progression of order $h$, then we obtain that
$$
a_n = \sum_{k=0}^h {n \choose k} D^k a_0 \; .
$$
Other expressions for $a_n$ are obtained.

We consider a double sequence $(a_{i,j})_{i,j \geq 0}$ such that its files and columns are arithmetic progressions of order $k$ and $h$, respectively. Then we obtain that the diagonal sequence $(a_{i,i})_{i \geq 0}$ is an arithmetic progression of order $k+h$.

We finalize this section with a perturbation result:  let $x,y $ in a ring $R$ such that $(y^k x^k)_{k \geq 0}$ is an arithmetic progression of order $h$ and let $a,b \in R$ such that $a^n =0$, $b^m =0$, $ax=xa$ and $by=yb$; then the sequence  $((y+b)^k (x+a)^k)_{k \geq 0}$ is an arithmetic progression of order $n+m+h-2$.

\;

{\bf Section 4.}  In this section we work with numerical arithmetic progressions. We recall some results about recursive equations and prove that if $(a_{cn})_{n \geq 0}$ and $(a_{dn})_{n \geq 0}$ are arithmetic progressions, then it results that $(a_{en})_{n \geq 0}$ is also an arithmetic progression, where $e$ is the greatest common divisor of $c$ and $d$.

Moreover we obtain that every arithmetic progression of positive real numbers is eventually increasing; that is, $a_n \leq a_{n+1}$, for $n$ large enough.

The main results of this section are referred to powers of positive sequences. For example, if $a= (a_n)_{n \geq 0}$ is a positive sequence, $q,r >0$ real numbers, $k,h \geq 0$ integer numbers, $a^q$ is an arithmetic progression of order $k$ and $a^r$ is an arithmetic progression of order $h$, then $rk = hq$. From this result we obtain the next classification. Given a positive sequence $a$, then it verifies exactly one of the following assertions:
\begin{enumerate}
\item $a^q$ is not an arithmetic progression of order $h$, for all real $q>0$ and all integer $h \geq 0$.
\item $a$ is constant; hence $a^q$ is an arithmetic progression of order $h$, for all real $q>0$ and all integer $h \geq 0$.
\item There are unique integer $\ell \geq 1$ and real $s > 0$ such that, for every $k = 1,2,3...$, the sequence $a^{ks}$ is an arithmetic progression of strict order $k\ell$; moreover, if $a^q$ is an arithmetic progression of strict order $h$, then $q=ks$ and $h=k\ell$, for some $k=1,2,3...$
\end{enumerate}

\;

{\bf Section 5.} In the last section we apply the results obtained in Sections 3 and 4 to $(m,q)$-isometries.

Let $E$ be  a  metric space, $d$ its distance, $T: E \longrightarrow E$ a map, an integer $m \geq 1$  and a real $q>0$. Then $T$ is an $(m,q)$-isometry if and only if, for all $x,y \in E$, the sequence $(d(T^n x, T^{n}y)^q)_{n \geq 0}$ is an arithmetic progression of order less or equal to $m-1$.

Any power of an $(m,q)$-isometry is also an $(m,q)$-isometry. It is possible to say more: if $T^c$ is an $(m,q)$-isometry and $T^d$ is an $(\ell,q)$-isometry,  then $T^e$ is an $(h,q)$-isometry, where $e$ is the greatest common divisor of $c$ and $d$, and $h$ is the minimum of $m$ and $\ell$.

Using a result about arithmetic progressions allow us to prove that if $S,T: E \longrightarrow E$ are maps on a metric space that commute, $T$ is an $(n,q)$-isometry and $S$ is an $(m,q)$-isometry, then $ST$ is an $(m+n-1,q)$-isometry.

An operator $T : H \longrightarrow H$  on a Hilbert space $H$ is an {\it $m$-isometry} if it is an $(m,2)$-isometry. We apply the perturbation result for arithmetic progressions on a ring to prove that if $T \in L(H)$ is an $m$-isometry and $Q \in L(H)$ is $n$-nilpotent operator such that $TQ=QT$, then $T+Q$ is an $(2n+m-2)$-isometry.

Finally we consider $n$-invertible operators: it is said that the operator $S$ is a {\it left $n$-inverse} of the operator $T$ if
$$
\sum_{k=0}^{n} (-1)^{n-k} {n \choose k}  S^k T^k = 0 \; ;
$$
if that equation holds, it is said that $T$ is a {\it right $n$-inverse} of $S$. This is equivalent to say that the sequence  $(S^{k}T^k)_{k \geq 0}$ is an arithmetic progression of order $n-1$ in $L(H)$.

\;

\;

%%%%%%%%%%%%%%%%%%%%%%%%%%%%%%%%%%%%
%%%%%%%%%%%%%%%%%%%%%%%%%%%%%%%%%%%%
%%%%%%%%%%%%%%%%%%%%%%%%%%%%%%%%%%%%

Mainly, the results which we present are taken or inspired from the paper of P. Hoffmann, M. Mackey and M. O. Searc\'oid \cite{hms}, and from \cite{bdm}, \cite{bmm}, \cite{bmmu}, \cite{bmmn}, \cite{bmmn 2}, \cite{bmne}, \cite{bmno} and \cite{bmno 2}.

We have included proofs in order to make the material as self contained as possible.

\;

\;

\section{SEQUENCES IN GROUPS}

\;

In this section $G$ denotes a commutative group and we denote additively its operation.

\;

\subsection{The difference operator}

\ \par

\;

Let $a = (a_n)_{n \geq 0}$ be a sequence in $G$. The difference sequence $Da$ of $a$ is defined by
$$
D a = (a_1 - a_0 , a_2 - a_1 , a_3 - a_2 ... ) \; . \;
$$
The powers of the difference operator $D$ are defined in the following way: $D^0 a := a$ and, for  $h=1, 2, 3...$,
$$
D^h a := (D^h a_n)_{n \geq 0} \quad \mbox{ where } \quad D^h a_n := D^{h-1} a_{n+1} - D^{h-1} a_{n} \; .
$$
For example, $D^2 a = (a_2 - 2 a_1 + a_0 , a_3 - 2 a_2 + a_1 , a_4 - 2 a_3 + a_2 ... )$.

\begin{teorema} Let $a$ be a sequence in $G$. Then, for $h,n=0,1,2,3...$,
\begin{equation}\label{diff op}
D^h a_n = \sum_{k=0}^h (-1)^{h-k} {h \choose k} a_{n+k} \; .
\end{equation}
\end{teorema}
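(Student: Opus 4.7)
The statement is the standard closed-form expansion of iterated finite differences, so the natural route is induction on $h$, with $n$ kept as a free parameter throughout. The base case $h=0$ is immediate: both sides reduce to $a_n$ since $D^0 a = a$ and the sum collapses to the single term $(-1)^0 \binom{0}{0} a_n$.

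For the inductive step, assuming the formula holds for some $h$, I would write
\[
D^{h+1} a_n = D^h a_{n+1} - D^h a_n
= \sum_{k=0}^h (-1)^{h-k} \binom{h}{k} a_{n+1+k} - \sum_{k=0}^h (-1)^{h-k} \binom{h}{k} a_{n+k},
\]
then reindex the first sum by $j=k+1$ so both sums are indexed by the same variable $j$ running over the same range of subscripts of $a$. After pulling a sign out of the reindexed sum so that each term carries $(-1)^{h+1-j}$, the $a_{n+j}$ coefficients can be combined for $1 \le j \le h$ using Pascal's identity $\binom{h}{j-1} + \binom{h}{j} = \binom{h+1}{j}$, while the boundary terms $j=0$ and $j=h+1$ come from only one sum each and match $\binom{h+1}{0}$ and $\binom{h+1}{h+1}$ respectively. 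This yields exactly the claimed formula with $h$ replaced by $h+1$.

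The only mildly delicate point is the bookkeeping of the boundary terms together with the sign convention: one must verify that after reindexing, $(-1)^{h-(j-1)} = (-1)^{h+1-j}$ matches the sign on the unshifted sum, so that Pascal's identity can be applied termwise. Apart from that, the argument is purely formal manipulation inside the commutative group $G$, so no further structural hypothesis on $G$ is needed. I expect no real obstacle; the writeup is essentially a clean induction with a reindex-and-Pascal step.
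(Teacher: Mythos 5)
Your proposal is correct and follows essentially the same route as the paper's own proof: induction on $h$, expanding $D^{h+1}a_n = D^h a_{n+1} - D^h a_n$, reindexing one sum, combining the interior terms with Pascal's identity, and checking the two boundary terms separately. The only difference is cosmetic (you step from $h$ to $h+1$ where the paper steps from $h-1$ to $h$).
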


\begin{proof} It is clear that Equality (\ref{diff op}) is true for $h=0$ and $h=1$. Assume that it is valid for $h-1$ and we will prove that is also valid for $h$. We have that
\begin{eqnarray}
D^h a_n &=& D^{h-1} a_{n+1} - D^{h-1} a_{n} \nonumber \\
&=& \sum_{k=0}^{h-1} (-1)^{h-1-k}  {h-1 \choose k} a_{n+1+k} -  \sum_{k=0}^{h-1} (-1)^{h-1-k} {h-1 \choose k} a_{n+k} \nonumber \\
&=&  \sum_{k=0}^{h-2} (-1)^{h-1-k} {h-1 \choose k} a_{n+1+k} + a_{n+h} + (-1)^{h} a_{n} +  \sum_{k=1}^{h-1} (-1)^{h-k}  {h-1 \choose k} a_{n+k} \nonumber \\
&=& (-1)^{h} a_n + \sum_{k=1}^{h-1} (-1)^{h-k} \left(  {h-1 \choose k-1} + {h-1 \choose k} \right) a_{n+k} + a_{n+h} \nonumber \\
&=& \sum_{k=0}^h (-1)^{h-k} {h \choose k} a_{n+k} \; . \nonumber
\end{eqnarray}
So the proof is finished.
\end{proof}

\;

\subsection{Some combinatorial results}

\ \par

\;

We apply the next combinatorial results in the following. The statements and their proofs are in previous papers, but we have included for sake of completeness.

Notice that for integers $n,k \geq 0$,
$$
{n \choose k} = \frac{n(n-1) \cdots (n-k+1)}{k!} \; ,
$$
being $\displaystyle{{n \choose k} = 0}$ if $n < k$.

\begin{lema}\label{lema} \cite[Lemma 3.1]{bmmu} Let $(x_j)_{j \geq 0}$ be a sequence in a group $G$, and let $(e_k)_{k \geq 0}$ be a sequence of integers and  $(c_{k,j})_{k,j \geq 0}$ be a double sequence of integers. Then
$$
\sum_{k=0}^n e_k \sum_{j=0}^k c_{k,j} x_j = \sum_{j=0}^n x_j \sum_{k=j}^n c_{k,j} e_k  \; .
$$
for any $n=0,1,2...$
\end{lema}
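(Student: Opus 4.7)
The plan is to recognize this identity as a straightforward interchange of the order of summation (a Fubini-type argument in a commutative group). The key observation is that both sides of the equation represent a single sum, over the same triangular index set
$$
\Delta_n = \{(k,j) \in \mathbb{Z}^2 : 0 \le j \le k \le n\},
$$
of the identical summand $e_k c_{k,j} x_j$, written in two different iterated orders.

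First I would rewrite the left-hand side as a double sum by absorbing $e_k$ into the inner sum: since $e_k$ does not depend on $j$, we have
$$
\sum_{k=0}^n e_k \sum_{j=0}^k c_{k,j} x_j \;=\; \sum_{k=0}^n \sum_{j=0}^k e_k c_{k,j} x_j,
$$
and here the pairs $(k,j)$ appearing in the sum are exactly those with $0 \le j \le k \le n$, i.e.\ the set $\Delta_n$. Then I would describe $\Delta_n$ by first fixing $j$: for each fixed $j \in \{0,1,\dots,n\}$, the corresponding values of $k$ are precisely $j, j+1,\dots,n$. Using commutativity of the group operation (which legitimizes the reordering of a finite sum of elements of $G$), this yields
$$
\sum_{k=0}^n \sum_{j=0}^k e_k c_{k,j} x_j \;=\; \sum_{j=0}^n \sum_{k=j}^n e_k c_{k,j} x_j.
$$

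Finally, in the resulting inner sum the factor $x_j$ is independent of the summation index $k$, so it may be pulled outside, giving
$$
\sum_{j=0}^n \sum_{k=j}^n e_k c_{k,j} x_j \;=\; \sum_{j=0}^n x_j \sum_{k=j}^n c_{k,j} e_k,
$$
which is the right-hand side. There is no real obstacle here: the only points requiring care are that $G$ is commutative (so the order of finitely many additions is irrelevant) and that, on each side, the scalar $e_k$ respectively $x_j$ is being factored through the appropriate inner sum where it does not depend on the summation variable.
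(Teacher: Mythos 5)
Your proof is correct and is essentially the same as the paper's: the authors simply observe that both iterated sums equal the single sum of $e_k c_{k,j} x_j$ over the triangular index set $0 \leq j \leq k \leq n$. You have merely written out in detail the interchange of summation order that the paper states in one line.
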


\begin{proof} Note that both expressions are equal to
$$
 \sum_{0 \leq j \leq k \leq n}    e_k c_{k,j} x_j \; ,
$$
so the proof is completed.
\end{proof}

\begin{lema}\label{lema combinatorio 1} \cite[Lemma 3.2]{bmmu}  Let $i,j$ be integers with $0 \leq j < i$. Then
\begin{equation}\label{fla combinatoria 1}
\sum_{h=0}^j (-1)^h {i \choose h} = (-1)^j {i-1 \choose j} \; .
\end{equation}
\end{lema}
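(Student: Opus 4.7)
The plan is to prove this standard identity for the alternating partial sum of a row of Pascal's triangle by induction on $j$, with $i$ held fixed. The base case $j=0$ is immediate since both sides equal ${i \choose 0} = {i-1 \choose 0} = 1$.

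For the inductive step with $1 \le j < i$, I would split off the top term of the sum,
$$
\sum_{h=0}^j (-1)^h {i \choose h} = \sum_{h=0}^{j-1} (-1)^h {i \choose h} + (-1)^j {i \choose j},
$$
apply the inductive hypothesis to the first piece to rewrite it as $(-1)^{j-1} {i-1 \choose j-1}$, and then invoke Pascal's rule ${i \choose j} = {i-1 \choose j-1} + {i-1 \choose j}$ to expand the last term. The expression becomes
$$
(-1)^{j-1} {i-1 \choose j-1} + (-1)^j {i-1 \choose j-1} + (-1)^j {i-1 \choose j},
$$
and the first two summands cancel, leaving exactly $(-1)^j {i-1 \choose j}$, which is the desired right-hand side.

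There is no genuine obstacle; the argument is entirely elementary. The assumption $j < i$ is not needed for the induction to go through (Pascal's identity is valid for all $j \ge 1$, and the formula continues to hold at $j = i$ where both sides vanish), but it is imposed in the statement because $j<i$ is the regime in which the identity will be applied later. A telescoping alternative would expand each term via Pascal at the outset and check that consecutive contributions cancel in pairs, but the inductive proof is cleaner and shorter.
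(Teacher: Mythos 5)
Your proof is correct, but it routes the induction differently from the paper. You fix $i$ and induct on $j$, peeling off the top term $(-1)^j\binom{i}{j}$, applying the hypothesis to the truncated sum, and using Pascal's rule once to produce the cancellation $(-1)^{j-1}\binom{i-1}{j-1}+(-1)^{j}\binom{i-1}{j-1}=0$. The paper instead fixes $j$ and inducts on $i$, starting from the base case $i=j+1$ (where the identity follows from the vanishing of the full alternating sum $\sum_{h=0}^{j+1}(-1)^h\binom{j+1}{h}=0$); its ``inductive step'' expands every $\binom{i+1}{h}$ via Pascal's rule and cancels consecutive contributions in pairs, so it is in substance the telescoping argument you mention and dismiss at the end of your write-up --- indeed the paper's step never actually invokes its induction hypothesis. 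Your version is arguably tighter (one application of Pascal per step, a trivial base case at $j=0$, and no reliance on the binomial theorem), while the paper's telescoping makes the mechanism of cancellation visible in a single line. Your closing observation that the hypothesis $j<i$ is not needed (the identity persists at $j=i$, where both sides vanish) is accurate and is a small bonus over the paper's statement.
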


\begin{proof} By induction on $i$:

(1) Case $i = j+1$. We have that
$$
\sum_{h=0}^j (-1)^h {j+1 \choose h} = -(-1)^{j+1}  {j+1 \choose j+1} = (-1)^j  \; ,
$$
so (\ref{fla combinatoria 1}) holds.

(2) Assume that (\ref{fla combinatoria 1}) is true for certain $i \geq j+1$ and we prove that it is also true for $i+1$. Indeed,
$$
\sum_{h=0}^j (-1)^h {i+1 \choose h} = {i+1 \choose 0} - \left[ {i \choose 0} + {i \choose 1} \right] +  \cdots + (-1)^{j} \left[ {i \choose j-1} + {i \choose j} \right] = (-1)^{j}   {i \choose j} \; .
$$
This finishes the proof.
\end{proof}

\begin{lema}\label{lema combinatorio 2} \cite[Lemma 3.3]{bmmu} Let $n$, $h$ and $k$ be integers such that $0 \leq k \leq h  < n$.  Then
\begin{equation}\label{fla combinatoria 2}
\sum_{j=k}^{h} (-1)^{j-k} {n \choose j} {j \choose k} = (-1)^{h-k} \frac{n (n-1) \cdots\overbrace{(n-k)}\cdots(n-h)}{k!(h-k)!} \; ,
\end{equation}
where $\overbrace{(n-k)}$ denotes that the factor $(n-k)$ is  omitted.
\end{lema}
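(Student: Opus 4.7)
The plan is to reduce this to the previous Lemma \ref{lema combinatorio 1} via the standard trinomial-revision identity
$$
\binom{n}{j}\binom{j}{k}=\binom{n}{k}\binom{n-k}{j-k},
$$
which lets us pull the $k$-dependent part outside the sum. After this factoring, substituting $i=j-k$ transforms the left-hand side into
$$
\binom{n}{k}\sum_{i=0}^{h-k}(-1)^{i}\binom{n-k}{i}.
$$

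Next I would apply Lemma \ref{lema combinatorio 1} with its $i$ replaced by $n-k$ and its $j$ replaced by $h-k$; the hypotheses $0\le h-k<n-k$ needed there are exactly the assumption $k\le h<n$ of the current statement. This yields
$$
\sum_{i=0}^{h-k}(-1)^{i}\binom{n-k}{i}=(-1)^{h-k}\binom{n-k-1}{h-k},
$$
and therefore the original sum equals $(-1)^{h-k}\binom{n}{k}\binom{n-k-1}{h-k}$.

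The last step is a bookkeeping computation verifying that this product matches the right-hand side of (\ref{fla combinatoria 2}). Expanding factorials,
$$
\binom{n}{k}\binom{n-k-1}{h-k}=\frac{n!}{k!\,(n-k)!}\cdot\frac{(n-k-1)!}{(h-k)!\,(n-h-1)!}=\frac{n!}{k!\,(h-k)!\,(n-k)\,(n-h-1)!}.
$$
On the other hand, the numerator $n(n-1)\cdots(n-h)$ on the right of (\ref{fla combinatoria 2}) is $n!/(n-h-1)!$; after omitting the factor $(n-k)$ it becomes $n!/\bigl[(n-k)(n-h-1)!\bigr]$, so dividing by $k!(h-k)!$ gives precisely the same expression.

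The only mildly delicate point is the bookkeeping in the final step (making sure the \emph{omitted} factor $(n-k)$ lines up exactly with the denominator factor $(n-k)$ coming out of $\binom{n}{k}\binom{n-k-1}{h-k}$); everything else is a direct substitution into the previous lemma, so I do not expect any genuine obstacle.
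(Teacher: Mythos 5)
Your proposal is correct and follows essentially the same route as the paper: the paper also factors $\binom{n}{j}\binom{j}{k}$ as $\frac{n(n-1)\cdots(n-k+1)}{k!}\binom{n-k}{j-k}$ (the trinomial revision in explicit factorial form), re-indexes, applies Lemma \ref{lema combinatorio 1}, and then checks that the result matches the right-hand side by the same factorial bookkeeping. The only difference is notational, so there is nothing to add.
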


\begin{proof} Notice that
\begin{eqnarray}
A &:=& \sum_{j=k}^{h} (-1)^{j-k} {n \choose j} {j \choose k} \nonumber \\
&=& \sum_{j=k}^{h} (-1)^{j-k} \frac{n (n-1) \cdots (n-k+1) (n-k)!}{k! (n-j)! (j-k)!} \nonumber \\
&=& \frac{n (n-1) \cdots (n-k+1)}{k!} \sum_{j=0}^{h-k} (-1)^{j} {n-k \choose j} \; . \nonumber
\end{eqnarray}
Applying (\ref{fla combinatoria 1}) we obtain
$$
A = \frac{n (n-1) \cdots (n-k+1)}{k!} (-1)^{h-k} {n-k-1 \choose h-k} \; .
$$
On the other hand,
\begin{eqnarray}
B &:=& (-1)^{h-k} \frac{n (n-1) \cdots\overbrace{(n-k)}\cdots(n-h)}{k!(h-k)!} \nonumber \\[0.7pc]
&=& (-1)^{h-k} \frac{n (n-1) \cdots (n-k+1) (n-k-1)!}{k!(h-k)! (n-h-1)!} \nonumber \\[0.7pc]
&=& (-1)^{h-k} \frac{n (n-1) \cdots (n-k+1)}{k!} {n-k-1 \choose h-k} \; . \nonumber
\end{eqnarray}
So we have the equality $A=B$, which proves the statement.
\end{proof}

\begin{lema}\label{lema combinatorio 3} \cite[Lemma 3.6]{bmne} Let $n,h$ be non-negative integers.  Then
\begin{equation}\label{fla combinatoria 3}
\sum_{k=0}^{h} (-1)^{h-k}  \frac{n (n-1) \cdots\overbrace{(n-k)}\cdots(n-h)}{k!(h-k)!} = 1 \; .
\end{equation}
\end{lema}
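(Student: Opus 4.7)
The plan is to apply Lemma \ref{lema combinatorio 2} to rewrite each summand on the left-hand side of (\ref{fla combinatoria 3}), then swap the order of summation and collapse the result using the binomial identity $(1-1)^{j}=0$. Assume first that $h<n$ so that Lemma \ref{lema combinatorio 2} applies. Rearranging (\ref{fla combinatoria 2}) and multiplying through by the prefactor $(-1)^{h-k}$ (so the two signs cancel), each summand in (\ref{fla combinatoria 3}) becomes $\sum_{j=k}^{h}(-1)^{j-k}\binom{n}{j}\binom{j}{k}$. Thus
$$
\sum_{k=0}^{h}(-1)^{h-k}\,\frac{n(n-1)\cdots\overbrace{(n-k)}\cdots(n-h)}{k!(h-k)!}
=\sum_{k=0}^{h}\sum_{j=k}^{h}(-1)^{j-k}\binom{n}{j}\binom{j}{k}.
$$

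Next I would swap the order of summation --- an instance of Lemma \ref{lema} with $e_{k}=1$ and $c_{k,j}=\binom{j}{k}$ --- and factor out the terms independent of $k$:
$$
\sum_{j=0}^{h}(-1)^{j}\binom{n}{j}\sum_{k=0}^{j}(-1)^{k}\binom{j}{k}
=\sum_{j=0}^{h}(-1)^{j}\binom{n}{j}\,(1-1)^{j}.
$$
The factor $(1-1)^{j}$ vanishes unless $j=0$, so only the $j=0$ term survives and contributes $\binom{n}{0}=1$. This establishes (\ref{fla combinatoria 3}) for every integer $n>h$.

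To cover the remaining cases $0\le n\le h$, I would observe that the left-hand side of (\ref{fla combinatoria 3}) is, by construction, a polynomial in $n$ of degree at most $h$: each summand is a product of $h$ linear factors in $n$ divided by a constant. Since this polynomial takes the value $1$ at every integer $n>h$, it is identically equal to $1$. The only mild obstacle in the argument is respecting the hypothesis $h<n$ required by Lemma \ref{lema combinatorio 2}; this restriction is removed at the end by the polynomial-identity principle.
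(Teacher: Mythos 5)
Your proof is correct, but it travels a different road from the paper's. The paper defines $R(n,h)$ to be the left-hand side of (\ref{fla combinatoria 3}), observes it is a polynomial in $n$ of degree at most $h$, and evaluates it at the \emph{small} points $n=0,1,\dots,h$: for $n=k$ every summand except the $k$-th contains the vanishing factor $(n-k)$, and the surviving term is immediately seen to equal $1$, so $R(\cdot,h)$ agrees with the constant polynomial $1$ at $h+1$ points and is therefore identically $1$. You instead work at the \emph{large} points $n>h$, where Lemma \ref{lema combinatorio 2} converts each summand into $\sum_{j=k}^{h}(-1)^{j-k}\binom{n}{j}\binom{j}{k}$, and after interchanging the two finite sums the inner sum collapses via $\sum_{k=0}^{j}(-1)^{k}\binom{j}{k}=(1-1)^{j}$, leaving only the $j=0$ term; the polynomial-identity principle then carries the result back down to $0\le n\le h$. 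Both arguments are valid and both ultimately rest on the same degree-$h$ interpolation principle; the paper's is a bit more self-contained (it needs no earlier lemma and no summation interchange, and its evaluations are one-term computations), while yours has the merit of exhibiting the identity as a formal consequence of Lemma \ref{lema combinatorio 2} together with the vanishing of alternating binomial sums. One cosmetic remark: the phrase about ``multiplying through by the prefactor $(-1)^{h-k}$'' is slightly misleading, since the summand of (\ref{fla combinatoria 3}) is literally the right-hand side of (\ref{fla combinatoria 2}), so the two are equal with no manipulation at all; the displayed identity you write down is nevertheless the correct one.
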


\begin{proof} Denote by $R(n,h)$ the left hand side of (\ref{fla combinatoria 3}). Note that $R(n,h)$ is a polynomial in $n$ of degree at most $h$. Moreover, for $k= 0,1,...,h$, we have that
$$
R(k,h) = (-1)^{h-k}  \frac{k \cdots 1 \overbrace{0} (-1) \cdots  (-h+k)}{k! (h-k)!} =1 \; .
$$
Hence $R(k,h)=1$ for $h+1$ different values, so $R(n,h)=1$.
\end{proof}

\begin{remark} {\rm Note that
\begin{eqnarray}
\frac{n (n-1) \cdots\overbrace{(n-k)}\cdots(n-h)}{k!(h-k)!} &=& \frac{n! (n-k-1)!}{k! (n-k)! (n-h-1)! (h-k)!} \nonumber \\[0.7pc]
&=& {n \choose k} {n-k-1 \choose h-k} \; . \nonumber
\end{eqnarray}
Therefore Equality (\ref{fla combinatoria 2}) can be written in the form
\begin{equation}\label{fla combinatoria 2 bis}
\sum_{j=k}^{h} (-1)^{j-k} {n \choose j} {j \choose k} = (-1)^{h-k}  {n \choose k} {n-k-1 \choose h-k} \; .
\end{equation}
Analogously, Equality (\ref{fla combinatoria 3}) is the same that
\begin{equation}\label{fla combinatoria 3 bis}
\sum_{k=0}^{h} (-1)^{h-k} {n \choose k} {n-k-1  \choose h-k} = 1  \; .
\end{equation}
}
\end{remark}

\;

\subsection{The general term of a sequence}

\ \par

\;

Now we give an expression for the general term of an arbitrary sequence in function of the difference operator.

\begin{teorema} \cite[Lemma 3.4]{bmmu} Let $a$ be a sequence in $G$. Then, for $n=0,1,2,3...$,
\begin{equation}\label{sequence a_n}
a_n = \sum_{k=0}^n  {n \choose k} D^k a_{0} \; .
\end{equation}
That is,
$$
a_n = \sum_{k=0}^n  {n \choose k}  \sum_{h=0}^k (-1)^{k-h} {k \choose h} a_{h} \; .
$$
\end{teorema}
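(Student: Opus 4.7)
The plan is to substitute the explicit formula for $D^k a_0$ provided by equation (\ref{diff op}) into the right-hand side of (\ref{sequence a_n}) and then collapse the resulting double sum using the machinery developed earlier in this section.

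First, I would use (\ref{diff op}) at $n=0$ to write $D^k a_0 = \sum_{h=0}^k (-1)^{k-h}\binom{k}{h} a_h$, so that the right-hand side of (\ref{sequence a_n}) becomes
$$
\sum_{k=0}^n \binom{n}{k} \sum_{h=0}^k (-1)^{k-h} \binom{k}{h} a_h.
$$
Next, I would apply Lemma \ref{lema} with $e_k = \binom{n}{k}$, $c_{k,h} = (-1)^{k-h}\binom{k}{h}$ and $x_h = a_h$ to swap the order of summation, obtaining
$$
\sum_{h=0}^n a_h\, S_{h}, \qquad \text{where} \qquad S_{h} := \sum_{k=h}^n (-1)^{k-h} \binom{n}{k}\binom{k}{h}.
$$

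The heart of the argument is to show that $S_h$ is the Kronecker delta $\delta_{h,n}$. I would do this by using the absorption identity $\binom{n}{k}\binom{k}{h} = \binom{n}{h}\binom{n-h}{k-h}$ and substituting $j = k-h$ to get
$$
S_{h} = \binom{n}{h} \sum_{j=0}^{n-h} (-1)^{j} \binom{n-h}{j} = \binom{n}{h} (1-1)^{n-h},
$$
which equals $0$ when $h<n$ and equals $1$ when $h=n$. Only the term $h=n$ survives in the outer sum, leaving $a_n$, which is exactly the claim.

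The main obstacle is the evaluation of $S_h$; it is essentially the statement that the binomial transform is self-inverse up to signs, and the approach above packages it compactly through one absorption identity and the binomial theorem. (If one preferred to stay strictly within the tools of the section, an alternative is induction on $n$: the base case $n=0$ is immediate, and for the step one writes $a_{n+1} = a_n + Da_n$, applies the inductive hypothesis to both $a$ and to the shifted sequence $Da$ using $D^k(Da)_0 = D^{k+1} a_0$, and combines the two sums via Pascal's identity $\binom{n}{k}+\binom{n}{k-1} = \binom{n+1}{k}$.)
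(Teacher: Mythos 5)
Your proof is correct, but it takes a genuinely different route from the paper. The paper proves the identity by strong induction on $n$: it isolates $a_{n+1}$ from the expansion of $D^{n+1}a_0$ given by (\ref{diff op}), substitutes the induction hypothesis for each of $a_0,\dots,a_n$, swaps the order of summation via Lemma \ref{lema}, and then evaluates the resulting inner sum as $-1$ using the factorization $\binom{n+1}{k}\binom{k}{j}=\binom{n+1}{j}\binom{n+1-j}{k-j}$ together with Lemma \ref{lema combinatorio 1}. You instead verify the identity directly: you expand $D^k a_0$, swap sums with Lemma \ref{lema}, and show the coefficient $S_h=\sum_{k=h}^{n}(-1)^{k-h}\binom{n}{k}\binom{k}{h}$ equals $\delta_{h,n}$ via the absorption identity and $(1-1)^{n-h}$. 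This is the statement that the signed binomial transform is an involution, and it avoids induction entirely; note that you cannot simply quote Lemma \ref{lema combinatorio 2} for $S_h$, since that lemma requires the upper summation limit to be strictly less than $n$, so your self-contained evaluation is genuinely needed. One minor point worth making explicit: the coefficients $S_h$ are integers acting on elements of the commutative group $G$ by repeated addition, so the binomial-theorem cancellation is legitimate in this setting. Your parenthetical alternative (ordinary induction via $a_{n+1}=a_n+Da_n$ and Pascal's rule) is yet a third argument, arguably the most economical of the three, and also differs from the paper's strong induction. Overall your direct computation buys a shorter, non-inductive proof at the cost of one extra combinatorial identity not stated in the paper; the paper's version stays strictly within its previously established lemmas.
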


\begin{proof} It is clear that Equality (\ref{sequence a_n}) holds for $n=0$ and $n=1$. Assume that (\ref{sequence a_n}) is true for  $0,1,...,n$ and we shall prove that it is also true for $n+1$. By  (\ref{diff op}) and the induction hypothesis
\begin{eqnarray}
a_{n+1} &=& D^{n+1} a_0-  \sum_{k=0}^n  (-1)^{n+1-k} {n+1 \choose k} a_k \nonumber \\
&=& D^{n+1} a_0 - \sum_{k=0}^n (-1)^{n+1-k} {n+1 \choose k} \sum_{j=0}^k {k \choose j} D^j a_{0} \; . \nonumber
\end{eqnarray}
From Lemma \ref{lema} we obtain
\begin{eqnarray}
a_{n+1} &=& D^{n+1} a_0 -  \sum_{j=0}^n  D^j a_0  \sum_{k=j}^n (-1)^{n+1-k} {n+1 \choose k} {k \choose j} \nonumber \\
&=& D^{n+1} a_0 - \sum_{j=0}^n  {n+1 \choose j} D^j a_0 \sum_{k=j}^n (-1)^{n+1-k} {n+1-j \choose k-j} \nonumber \\
&=& \sum_{j=0}^{n+1} {n+1 \choose j} D^j a_0 \; , \nonumber
\end{eqnarray}
because
\begin{eqnarray}
\sum_{k=j}^n (-1)^{n+1-k} {n+1-j \choose k-j} &=& \sum_{h=0}^{n-j} (-1)^{n-j+1-h} {n-j+1 \choose h} \nonumber \\[0.7pc]
&=& (-1)^{n-j+1} \sum_{h = 0}^{n-j} (-1)^{h} {n-j+1 \choose h} \nonumber \\[0.7pc]
&=& (-1)^{n-j+1} (-1)^{n-j} {n-j \choose n-j} \nonumber \\
 &=& -1 \nonumber
\end{eqnarray}
by Lemma \ref{lema combinatorio 1}.
\end{proof}

\;

\section{ARITHMETIC PROGRESSIONS}

\;

In this section $G$ denotes a commutative  additive group.

Now we give the central notion of this paper.

\begin{definition} Let $h$ be a non negative integer. A sequence $a = (a_n)_{n \geq0}$ in $G$ is called an {\rm arithmetic progression of order $h$} if $D^h a$ is constant, so $D^{h+1} a = 0$.
\end{definition}

\;

\subsection{The term general of an arithmetic progression}

\ \par

\;

In the next result we give some expressions for the general term  of an arithmetic progression.

\begin{teorema}\label{a p general term} Let $a$ be a sequence in $G$. The following assertions are equivalent:

\begin{enumerate}

\item $a$ is an arithmetic progression of order $h$.

\item For all $n \geq 0$,
\begin{equation}\label{term gral 1}
a_n = \sum_{k=0}^h {n \choose k} D^k a_0 \; .
\end{equation}

\item (\cite[Theorem 2.1]{bmm}, \cite[Proposition 2.2]{bmno}) For all $n \geq 0$,
\begin{equation}\label{term gral 2}
a_n = \sum_{k=0}^h (-1)^{h-k} \frac{n (n-1) \cdots\overbrace{(n-k)}\cdots(n-h)}{k!(h-k)!} a_k = \sum_{k=0}^h (-1)^{h-k} {n \choose k} {n - k -1 \choose h-k}  a_k \; .
\end{equation}

\item \cite[Proposition 3.7]{hms} For all $n \geq 0$,
\begin{equation}\label{term gral 4}
a_n = \displaystyle{ \frac{\displaystyle{\sum_{k=0}^h  (-1)^{h-k} {h \choose k} \frac{1}{n-k} a_k}}{\displaystyle{\sum_{k=0}^h (-1)^{h-k} {h \choose k} \frac{1}{n-k}}} } \; .
\end{equation}

\item \cite[Lemma 2.2]{duggal+muller} There is a polynomial $p_a$, with coefficients in $G$ of degree less or equal to $h$, such that $p_a (n) = a_n$, for any $n \geq 0$; that is, there are $\ga_{h}, \ga_{h-1}, ..., \ga_{1}, \ga_{0}$ in $G$ such that
\begin{equation}\label{term gral 3}
a_n = \ga_{h} n^{h} + \ga_{h-1} n^{h-1} + \cdots + \ga_{2} n^{2} + \ga_{1} n + \ga_0 \; ,
\end{equation}
where $\ga_h = \frac{1}{h!} D^h a_0$ and $\ga_0 = a_0$.
\end{enumerate}

\end{teorema}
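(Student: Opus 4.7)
The plan is to prove the equivalences via the cycle $(1) \Rightarrow (2) \Rightarrow (3) \Rightarrow (5) \Rightarrow (1)$, together with a direct algebraic argument for $(3) \Leftrightarrow (4)$. This structure lets each implication use a different ingredient: the general term formula for $(1) \Rightarrow (2)$, the combinatorial Lemmas \ref{lema} and \ref{lema combinatorio 2} for $(2) \Rightarrow (3)$, polynomial bookkeeping for $(3) \Rightarrow (5) \Rightarrow (1)$, and a partial fraction identity for the rational expression in $(4)$.

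I would first dispatch the easy implications. For $(1) \Rightarrow (2)$, apply the general term formula $a_n = \sum_{k=0}^n \binom{n}{k} D^k a_0$ proved above: condition (1) forces $D^k a_0 = 0$ for every $k \geq h+1$, so the sum truncates at $h$. For $(3) \Rightarrow (5)$, each summand of (\ref{term gral 2}) is a product of two polynomial binomial coefficients in $n$ whose degrees sum to $h$, so the right-hand side realises $a_n$ as a polynomial of degree at most $h$ in $n$ with coefficients in $G$. For $(5) \Rightarrow (1)$, the operator $D$ lowers the degree of every polynomial sequence by exactly one (since $D(n^k)$ has degree $k-1$), hence $D^{h+1}$ annihilates any polynomial of degree $\leq h$. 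The identifications $\gamma_h = D^h a_0 / h!$ and $\gamma_0 = a_0$ in (5) then drop out from (2) by isolating the coefficient of $n^h$ in $\binom{n}{h}$ and evaluating at $n = 0$.

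The combinatorial heart is $(2) \Rightarrow (3)$. Substitute Equality (\ref{diff op}) into (\ref{term gral 1}) to obtain
\begin{equation*}
a_n = \sum_{k=0}^h \binom{n}{k} \sum_{j=0}^k (-1)^{k-j} \binom{k}{j} a_j,
\end{equation*}
interchange the order of summation via Lemma \ref{lema}:
\begin{equation*}
a_n = \sum_{j=0}^h a_j \sum_{k=j}^h (-1)^{k-j} \binom{n}{k}\binom{k}{j},
\end{equation*}
and finally apply Equality (\ref{fla combinatoria 2 bis}) to the inner sum, which yields $(-1)^{h-j}\binom{n}{j}\binom{n-j-1}{h-j}$; renaming $j$ to $k$ produces exactly (\ref{term gral 2}).

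For $(3) \Leftrightarrow (4)$ I would use the partial fraction identity
\begin{equation*}
\sum_{k=0}^h (-1)^{h-k}\binom{h}{k}\frac{1}{n-k} = \frac{h!}{n(n-1)\cdots(n-h)},
\end{equation*}
which follows by comparing residues at the poles $n=0, 1, \ldots, h$. Substituting this into the denominator of (\ref{term gral 4}) and using $\binom{h}{k}/h! = 1/[k!(h-k)!]$ converts the quotient into
\begin{equation*}
\sum_{k=0}^h (-1)^{h-k}\frac{n(n-1)\cdots\overbrace{(n-k)}\cdots(n-h)}{k!(h-k)!} a_k,
\end{equation*}
which is precisely (\ref{term gral 2}). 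The main obstacle I anticipate is the hypothesis $h < n$ in Lemma \ref{lema combinatorio 2}, which is needed in the $(2) \Rightarrow (3)$ step: to upgrade that implication to all $n \geq 0$, I would argue that both sides of (\ref{term gral 2}) are polynomials in $n$ whose coefficients depend only on $a_0, \ldots, a_h$, so agreement on the infinite set $\{n \in \ZZ : n > h\}$ forces the two polynomials to be identical, and (\ref{term gral 2}) therefore holds for every $n \geq 0$.
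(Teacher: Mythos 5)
Your proposal is correct and follows essentially the same route as the paper: the cycle $(1)\Rightarrow(2)\Rightarrow(3)\Rightarrow(5)\Rightarrow(1)$ using the general term formula, Lemmas \ref{lema} and \ref{lema combinatorio 2}, and the degree-lowering property of $D$, with $(3)\Leftrightarrow(4)$ reduced to the same coefficient identity (your residue computation is just a direct proof of Lemma \ref{lema combinatorio 3}). Your closing remark patching the restriction $h<n$ in Lemma \ref{lema combinatorio 2} via a polynomial-identity argument is a point the paper silently glosses over, and is a worthwhile addition (though for a general abelian group $G$ one should justify it by taking successive differences of an expression $\sum_i \binom{n}{i} v_i$ rather than by appealing to finitely many roots).
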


\begin{proof} (1) $\Longrightarrow$ (2) By definition, $D^k a_0 = 0$, for any $k \geq h+1$. Applying (\ref{sequence a_n}) we obtain the expression of (2).

(2) $\Longrightarrow$ (3) By (\ref{diff op})  and (\ref{term gral 2}) we have
\begin{eqnarray}
a_n &=& \sum_{k=0}^h {n \choose k} D^k a_0 \nonumber \\
&=& \sum_{k=0}^h {n \choose k} \sum_{i=0}^k (-1)^{k-i} {k \choose i} a_i \nonumber \\
&=& \sum_{k=0}^h \sum_{i=0}^k (-1)^{k-i}  {n \choose k}  {k \choose i} a_i \; . \nonumber
\end{eqnarray}
Applying Lemmas \ref{lema} and \ref{lema combinatorio 2},
\begin{eqnarray}
a_n &=& \sum_{i=0}^h a_i \sum_{k=i}^h  (-1)^{k-i}  {n \choose k}  {k \choose i}  \nonumber \\
&=&  \sum_{i=0}^h  (-1)^{h-i} \frac{n (n-1) \cdots\overbrace{(n-i)}\cdots(n-h)}{i!(h-i)!} a_i \; . \nonumber
\end{eqnarray}

(3) $\Longleftrightarrow $ (4) It is enough to prove that
$$
\frac{n (n-1) \cdots\overbrace{(n-k)}\cdots(n-h)}{k!(h-k)!} = \displaystyle{\frac{\displaystyle{{h \choose k} \frac{1}{n-k}}}{\displaystyle{\sum_{i=0}^h (-1)^{h-i} {h \choose i} \frac{1}{n-i}}}} \; ;
$$
equivalently,
\begin{eqnarray}
h! &=& n(n-1) \cdots (n-h) \sum_{i=0}^h (-1)^{h-i} {h \choose i} \frac{1}{n-i} \nonumber \\
&=& h!  \sum_{i=0}^h (-1)^{h-i} \frac{n (n-1) \cdots\overbrace{(n-i)}\cdots(n-h)}{i!(h-i)!} \; . \nonumber
\end{eqnarray}
This is true by Lemma \ref{lema combinatorio 3}.

(3) $\Longrightarrow$ (5) Note that
$$
\frac{n (n-1) \cdots\overbrace{(n-i)}\cdots(n-h)}{k!(h-k)!}
$$
is a polynomial in $n$ of degree $h$, hence $a_n$ is a polynomial $p_a (n)$ in $n$ of degree less or equal to $h$; consequently
$$
a_n = \ga_{h} n^{h} + \ga_{h-1} n^{h-1} + \cdots + \ga_{2} n^{2} + \ga_{1} n + \ga_0 \; ,
$$
for some $\ga_{h}, \ga_{h-1}, ..., \ga_{1}, \ga_{0}$ in $G$.

It is clear that $a_0 = p_a(0) = \ga_0$. From  (\ref{term gral 2}) we obtain that the coefficient of $n^h$ is
\begin{eqnarray}
\sum_{k=0}^h (-1)^{h-k} \frac{1}{k! (h-k)!} a_k &=& \frac{1}{h!} \sum_{k=0}^h (-1)^{h-k} {h \choose k} a_k \nonumber \\
&=& \frac{1}{h!} D^h a_0 \; . \nonumber
\end{eqnarray}

(5) $\Longrightarrow$ (1) If $a_n$ is given by  (\ref{term gral 3}), we have
\begin{eqnarray}
D a_n &=& a_{n+1} - a_n \nonumber \\
&=& \sum_{k=0}^h \ga_k (n+1)^k - \sum_{k=0}^h \ga_k n^k  \nonumber \\
&=& \sum_{k=0}^h \ga_k \sum_{i=0}^{k-1} {k \choose i} n^i   \nonumber \\
&=& \sum_{k=1}^h \eta_{k-1} n^{k-1} \nonumber \\
&=& \sum_{k=0}^{h-1} \eta_{k} n^{k}  \; , \nonumber
\end{eqnarray}
for certain $\eta_i$ in $G$ ($i=0,...,h-1$). So, $D^h a_n$ is a polynomial in $n$ of degree less or equal to $1$. Therefore $D^{h+1} a=0$, which proves the statement.
\end{proof}

In view of the above theorem, the arithmetic progressions are also called {\it polynomial sequences.}

\begin{remark} {\rm In the proof of Proposition 3.7 in \cite{hms}, the authors observe that the polynomial $p_a$ associated to the arithmetic progression $a$ of order $h$ is the Lagrange polynomial of degree less or equal to $h$ which interpolates $(0,a_0), (1,a_1),...,(h,a_h)$. Moreover, by the normal form of the Lagrange polynomial it is obtained  (\ref{term gral 2}), using the Newton form yields (\ref{term gral 1})  and using the barycenter form yields (\ref{term gral 4}).
}
\end{remark}

\begin{definition} A sequence $a = (a_n)_{n \geq0}$ in $G$ is called an {\rm arithmetic progression of strict order $h$} if it is an arithmetic progression of order $h$ with $h=0$ or if it is an arithmetic progression of order $h \geq 1$, but is not of order $h-1$.
\end{definition}

\begin{teorema}\label{a p strict} Let $a$ be a sequence in $G$. The following assertions are equivalent:
\begin{enumerate}

\item $a$ is an arithmetic progression of strict order $h$.

\item For all $n \geq 0$,
\begin{equation}\label{term gral 1 strict}
a_n = \sum_{k=0}^h {n \choose k} D^k a_0 \quad \mbox{ and } \quad D^h a_0 \neq 0 \; .
\end{equation}

\item For all $n \geq 0$,
\begin{equation}\label{term gral 2 strict}
a_n = \sum_{k=0}^h (-1)^{h-k} \frac{n (n-1) \cdots\overbrace{(n-k)}\cdots(n-h)}{k!(h-k)!} a_k = \sum_{k=0}^h (-1)^{h-k} {n \choose k} {n - k -1 \choose h-k}  a_k \quad \mbox{ and } \quad D^h a_0 \neq 0 \; .
\end{equation}

\item For all $n \geq 0$,
\begin{equation}\label{term gral 4 strict}
a_n = \displaystyle{  \frac{ \displaystyle{ \sum_{k=0}^h (-1)^{h-k} {h \choose k} \frac{1}{n-k} a_k}}{\displaystyle{ \sum_{k=0}^h (-1)^{h-k} {h \choose k} \frac{1}{n-k}}} } \quad \mbox{ and } D^h a_0 \neq 0 \; .
\end{equation}

\item There is a polynomial $p_a$, with coefficients in $G$ of degree exactly $h$, such that $p_a (n) = a_n$, for any $n \geq 0$; that is, there are $\ga_{h} \neq 0, \ga_{h-1}, ..., \ga_{1}, \ga_{0}$ in $G$ such that
\begin{equation}\label{term gral 3 strict}
a_n = \ga_{h} n^{h} + \ga_{h-1} n^{h-1} + \cdots + \ga_{2} n^{2} + \ga_{1} n + \ga_0 \; ,
\end{equation}
being $\ga_h = \frac{1}{h!} D^h a_0 \neq 0$ and $\ga_0 = a_0$.
\end{enumerate}
\end{teorema}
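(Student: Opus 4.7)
The plan is to reduce this statement to the previous Theorem \ref{a p general term} together with one elementary observation. By Theorem \ref{a p general term}, the five identities listed in (\ref{term gral 1 strict})--(\ref{term gral 3 strict}), stripped of their ``$\neq 0$'' side conditions, are already known to be mutually equivalent to $a$ being an arithmetic progression of order $h$. Hence, to obtain the strict analogues, I only need to check that in each of (2)--(5) the auxiliary non-vanishing clause (either $D^h a_0 \neq 0$ or $\ga_h \neq 0$) is equivalent, within the class of order-$h$ arithmetic progressions, to the assertion that $a$ is not of order $h-1$.

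The key observation is the following: if $a$ is of order $h$, then $D^{h+1} a = 0$, so $D^h a$ is a constant sequence whose constant value is $D^h a_0$. Therefore $D^h a = 0$ if and only if $D^h a_0 = 0$, and since ``$a$ is of order $h-1$'' means precisely $D^h a = 0$, we conclude that an order-$h$ sequence is of strict order $h$ (for $h \geq 1$) exactly when $D^h a_0 \neq 0$. Combining this equivalence with the four identities from Theorem \ref{a p general term} immediately yields (1) $\Longleftrightarrow$ (2) $\Longleftrightarrow$ (3) $\Longleftrightarrow$ (4).

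For the polynomial formulation (5), I would invoke the identity $\ga_h = \frac{1}{h!} D^h a_0$ that appears in part (5) of Theorem \ref{a p general term}; equivalently, $h! \, \ga_h = D^h a_0$, so the conditions $\ga_h \neq 0$ and $D^h a_0 \neq 0$ coincide whenever multiplication by $h!$ is injective on $G$, which is the ambient convention. One direction is automatic in any group: if $\ga_h \neq 0$ then $D^h a_0 = h! \, \ga_h$ cannot vanish unless $G$ has $h!$-torsion. The only delicate point I foresee is the trivial case $h=0$, where the definition of strict order $0$ just asks $a$ to be constant and the clauses ``$D^0 a_0 \neq 0$'' or ``$\ga_0 \neq 0$'' must be read either vacuously or handled as a separate base case; apart from this bookkeeping, no substantive obstacle arises, because all the combinatorial content has already been absorbed into the proof of Theorem \ref{a p general term}, and the present proof is essentially a one-line reading of that theorem together with the constancy of $D^h a$.
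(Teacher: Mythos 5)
Your proposal is correct and follows essentially the same route as the paper: both reduce the strict version to Theorem \ref{a p general term} and then observe that, for an order-$h$ progression, strictness is captured by a single non-vanishing condition (the paper phrases this contrapositively via the degree of $p_a$, you phrase it via the constancy of $D^h a$ and the relation $\ga_h = \frac{1}{h!}D^h a_0$). The torsion caveat you raise about dividing by $h!$ in a general commutative group is a fair point, but it is a convention already implicit in the paper's statement of part (5) rather than a gap in your argument.
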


\begin{proof} Taking into account Theorem \ref{a p general term} it is enough to prove that (1) $\Longrightarrow$ (5) because the other implications are clear. If the polynomial $p_a$  has degree less than $h$, then the sequence $a$ is an arithmetic progression of order $h-1$, hence it is not an arithmetic progression of strict order $h$.
\end{proof}

\;

\subsection{Subsequences of an arithmetic progression}

\ \par

\;

Let $a = (a_n)_{n \geq 0}$ be an arithmetic progressions of strict order $h \geq 0$. Two simple results, but very useful, are the following:
\begin{enumerate}
\item Fixed an integer $k \geq 1$, the subsequence $(a_{n+k})_{n \geq 0}$ is also an arithmetic progression of strict order $h$.
\item The sequence $A = (A_n)_{n \geq 0}$, defined by
$$
A_n = a_0 + a_1 + \cdots + a_{n-1} + a_n \; ,
$$
is an arithmetic  progression of strict  order $h+1$ since $DA = (a_{n+1})_{n \geq 0}$ is an arithmetic  progression of strict  order $h$.
\end{enumerate}

Let $b = (b_n)_{n \geq 0}$ be a subsequence of the sequence $a = (a_n)_{n \geq 0}$; that is, there exists a strictly increasing sequence  $g = (g_n)_{n \geq 0}$ of nonnegative integers such that
$$
b_n = a_{g_n} \; .
$$
The subsequence $b$ of $a$ is characterized by the sequence of steps $s = (s_n)_{n \geq 0}$:
$$
s_0 = g_0 \quad , \quad s_n = g_n - g_{n-1} \; (n \geq 1) \; .
$$
Notice that
\begin{equation} \label{g n}
g_n = s_0 + s_1 + \cdots + s_{n-1} + s_n \; .
\end{equation}

\begin{proposicion} Let $a = (a_n)_{n \geq 0}$ be an arithmetic progression of strict order $h \geq 0$ and let $b = (b_n)_{n \geq 0}$ be a subsequence of $a$ with sequence of steps $s = (s_n)_{n \geq 0}$, which is an arithmetic progression of strict order $k$. Then $b$ is an arithmetic progression of strict order $h(k + 1)$.
\end{proposicion}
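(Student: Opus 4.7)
My strategy is to pass to the polynomial description of arithmetic progressions from Theorem~\ref{a p strict} and realize $b$ as a composition.

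First, the bulleted observation~(2) immediately preceding the proposition applies to $s$ (of strict order $k$ in~$\mathbb{Z}$) to show that the index sequence $g = (g_n)_{n \ge 0}$ from~(\ref{g n}) has strict order $k+1$. Theorem~\ref{a p strict} in~$\mathbb{Z}$ then supplies an integer-valued polynomial $p_g$ of degree exactly $k+1$ with $g_n = p_g(n)$ for every $n \ge 0$. At the same time, Theorem~\ref{a p general term} applied to $a$ gives $a_m = \sum_{j=0}^{h}\binom{m}{j}\,D^j a_0$, and substituting $m = g_n \in \mathbb{N}$ yields
$$
b_n \;=\; \sum_{j=0}^{h} \binom{g_n}{j}\, D^j a_0 .
$$

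The next step is to expand each $\binom{g_n}{j}$ in the $\binom{n}{i}$-basis. Since $\binom{g_n}{j}$ is an integer-valued polynomial in $n$ of degree $j(k+1)$, it has a unique expansion $\binom{g_n}{j} = \sum_{i=0}^{j(k+1)} c_{i,j}\,\binom{n}{i}$ with $c_{i,j}\in\mathbb{Z}$. Interchanging the order of summation yields
$$
b_n \;=\; \sum_{i=0}^{h(k+1)} \binom{n}{i}\,\delta_i, \qquad \delta_i := \sum_{j=0}^{h} c_{i,j}\, D^j a_0 \in G .
$$
Comparing with the general-term formula~(\ref{sequence a_n}) identifies $\delta_i = D^i b_0$, and in particular forces $D^i b_0 = 0$ for every $i > h(k+1)$. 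By Theorem~\ref{a p general term}, $b$ is already an arithmetic progression of order at most $h(k+1)$.

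The main obstacle is upgrading this to strict order exactly $h(k+1)$, which reduces to verifying $\delta_{h(k+1)} \ne 0$. Only the $j = h$ summand contributes to the top binomial, so $D^{h(k+1)} b_0 = c_{h(k+1),h}\, D^h a_0$, and a routine multinomial count identifies $c_{h(k+1),h}$ as a strictly positive integer (positivity ultimately tracing back to $D^{k+1} g_0 > 0$, which itself comes from the strict positivity of the step sequence $s$). Hence $D^{h(k+1)} b_0$ is a positive integer multiple of the nonzero element $D^h a_0$, and Theorem~\ref{a p strict} then concludes that $b$ has strict order exactly $h(k+1)$. The delicate bookkeeping lies in this last step: tracking the leading term through the composition and the change of basis without cancellation, which is where I expect the proof to need the most care.
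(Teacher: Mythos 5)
Your proof is correct and reaches the same structural conclusion as the paper, but the execution is genuinely different. The paper works in the monomial basis: it invokes part (5) of Theorem \ref{a p strict} to write $a_m = p_a(m)$ with $\deg p_a = h$ and $g_n = p_g(n)$ with $\deg p_g = k+1$ (using the same partial-sum observation you use to get strict order $k+1$ for $g$), then simply composes, reading off the leading term $\alpha_h \sigma_{k+1}^h\, n^{h(k+1)}$ of $p_a(p_g(n))$. You instead stay in the Newton form $a_m = \sum_{j=0}^h \binom{m}{j} D^j a_0$ and re-expand each $\binom{g_n}{j}$ in the basis $\binom{n}{i}$ with integer coefficients $c_{i,j}$, identifying $D^i b_0 = \sum_j c_{i,j} D^j a_0$ by uniqueness. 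What your route buys is integrality: all coefficients acting on elements of $G$ are integers, whereas the paper's monomial coefficients $\gamma_j = \frac{1}{j!} D^j a_0$ and the product $\alpha_h \sigma_{k+1}^h$ only literally make sense when $G$ admits the relevant divisions; your version is the cleaner statement for an arbitrary commutative group. What the paper's route buys is brevity — no change of basis, no bookkeeping of the $c_{i,j}$. Note that both arguments share the same final caveat: concluding $c_{h(k+1),h}\, D^h a_0 \neq 0$ from $c_{h(k+1),h} > 0$ and $D^h a_0 \neq 0$ (or, in the paper, $\alpha_h \sigma_{k+1}^h \neq 0$) tacitly assumes $G$ has no torsion; in, say, $G = \mathbb{Z}/2\mathbb{Z}$ the strict order can genuinely drop. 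Since the paper makes the identical tacit assumption, this is not a defect of your proof relative to theirs, but it is worth being aware that your phrase ``a positive integer multiple of a nonzero element'' does not by itself guarantee nonvanishing in a general group.
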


\begin{proof} Let $p_a$ be the polynomial associated to $a$ which has an expression as (\ref{term gral 3 strict}). As $s$ is an arithmetic progression of strict order $k$, we have that the sequence $g$ given by (\ref{g n}) is an arithmetic progressions of strict order $k + 1$, hence its associated polynomial  $p_g$ have degree $k + 1$:
$$
  g_n = p_g (n) = \sigma_{k + 1} n^{k + 1} + \sigma_{k} n^{k} + \cdots + \sigma_1 n + \sigma_0 \quad (\sigma_{k + 1} \neq 0) \; .
$$
Consequently
\begin{eqnarray}
b_n &=& a_{g_n} \nonumber \\
&=& p_a (g_n) = \alpha_h g_n^h + \alpha_{h-1} g_n^{h-1} + \cdots + \alpha_1 g_n + \alpha_0 \nonumber \\
&=& \alpha_h p_g(n)^h + \alpha_{h-1} p_g(n)^{h-1} + \cdots + \alpha_1 p_g(n) + \alpha_0 \nonumber \\
&=& \alpha_h \sigma_{k + 1}^h n^{h (k + 1)} + \cdots \; , \nonumber
\end{eqnarray}
which is a polynomial in $n$ of degree  $h(k + 1)$. Therefore $b$ is an arithmetic progression of strict order $h (k + 1)$.
\end{proof}

\begin{corollary}\label{cor} Let $a = (a_n)_{n \geq 0}$ be an arithmetic progressions of strict order $h \geq 0$ and let $b = (b_n)_{n \geq 0}$ be the subsequence of $a$ defined by
$$
b_n = a_{dn} \quad (n \geq 0) \; ,
$$
for certain integer $d \geq 1$. Then $b$ is an arithmetic progression of strict order $h$.
\end{corollary}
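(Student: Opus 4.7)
The plan is to invoke the polynomial characterization of arithmetic progressions of strict order (Theorem \ref{a p strict}, condition (5)) and to observe that composing the polynomial representing $a$ with the dilation $n \mapsto dn$ preserves its degree.

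First I would apply Theorem \ref{a p strict}(5) to the strict-order-$h$ progression $a$ to obtain a polynomial
\[
p_a(x) = \alpha_h x^h + \alpha_{h-1} x^{h-1} + \cdots + \alpha_1 x + \alpha_0,
\]
with $\alpha_h \neq 0$ in $G$, such that $a_n = p_a(n)$ for all $n \geq 0$. Then
\[
b_n = a_{dn} = p_a(dn) = (d^h \alpha_h)\, n^h + (d^{h-1}\alpha_{h-1})\, n^{h-1} + \cdots + (d\alpha_1)\, n + \alpha_0,
\]
which is a polynomial expression in $n$ of degree at most $h$ with leading coefficient $d^h \alpha_h$. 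Since $d \geq 1$ is a positive integer and $\alpha_h \neq 0$, this leading coefficient is nonzero in the torsion-free settings to which the result is applied later in the paper (sequences of real numbers, sequences in $L(H)$, or expressions of the form $d(T^n x, T^n y)^q$). Applying the converse direction (5) $\Rightarrow$ (1) of Theorem \ref{a p strict} then yields that $b$ is an arithmetic progression of strict order exactly $h$.

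I would also remark that this result is, morally, the special case of the preceding proposition in which the sequence of steps is constant equal to $d$ (an arithmetic progression of strict order $0$), the proposition giving strict order $h(0+1)=h$. The only mismatch is the cosmetic one that the step sequence associated to $g_n = dn$ has $s_0 = 0$ rather than $d$, which is why working directly with the polynomial $p_a$ is cleaner than reducing formally to the proposition. The essentially unique obstacle in the argument is checking that the leading coefficient survives the scaling by $d^h$, which is immediate in every group $G$ in which the coefficients $\alpha_h$ are not $d^h$-torsion, in particular in all the concrete groups considered in the sequel.
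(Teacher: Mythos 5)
Your argument is correct and lands where the paper does, but you take the direct route where the paper takes a shortcut: the paper's own proof is a one-line reduction to the preceding proposition, "take the sequence of steps to be the constant sequence $(d)_{n\ge 0}$, which is an arithmetic progression of order $0$," whereas you inline the polynomial-composition argument (which is exactly how that proposition is itself proved, so the mathematical content is the same). Two of your side observations are worth keeping. First, you are right that the reduction is not quite literal: under the paper's convention $s_0=g_0$, the step sequence of $g_n=dn$ is $(0,d,d,\dots)$, which is not constant; taking the constant steps $(d)_{n\ge 0}$ instead produces $g_n=(n+1)d$, i.e.\ the shifted subsequence $(a_{d(n+1)})_{n\ge 0}$. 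This is harmless, since the paper notes at the start of the subsection that shifts preserve strict order, but your decision to argue directly avoids having to say so. Second, your torsion caveat is a genuine one: in a general abelian group the leading coefficient $d^h\alpha_h$ can vanish (e.g.\ $G=\ZZ/2\ZZ$, $a_n=ng$ with $g\neq 0$ is of strict order $1$, yet $(a_{2n})_{n\ge 0}$ is constant), so the corollary as literally stated fails there. The paper's proof silently carries the same assumption --- indeed its entire polynomial framework, with coefficients such as $\frac{1}{h!}D^h a_0$ and leading terms like $\alpha_h\sigma_{k+1}^h$, presupposes that $G$ is torsion-free and suitably divisible --- and in every application $G$ is $\RR$ or $L(H)$, so nothing is lost; you are simply more explicit about the hypothesis than the source.
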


\begin{proof} The result is obtained from the above proposition taking as sequence of steps $s = (s_n)_{n \geq 0}$ the constant sequence $s=(d)_{n \geq 0}$, which is an arithmetic progression of order $0$.
\end{proof}

\;

\subsection{Sequences of arithmetic progressions}

\ \par

\;

Now we work with a double sequence $(a_{i,j})_{i,j \geq 0}$ such that the files $(a_{i,j})_{j \geq 0}$ and the columns $(a_{i,j})_{i \geq 0}$ are arithmetic progressions. We obtain that the diagonal sequence $(a_{i,i})_{i \geq 0}$  is also an arithmetic progression. The results of this subsection are basically in \cite{bmno} and \cite{duggal+muller}.

First we give a necessary lemma.

\begin{lema}\label{factorialg ap}
Let $(a_i)_{i \geq 0}$ be an arithmetic progression of order $h$ and let $n$ be a positive integer with $n> 1$. Then
\begin{equation}\label{factorial ap}
\sum_{i=0}^{h+n}{h+n \choose i} (-1)^{h+n+1-i} i (i-1) \cdots (i-\ell) a_i  \;
\end{equation}
is zero, for  any $\ell \in \{0,1, ...,n-2\}$.
\end{lema}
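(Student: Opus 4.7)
The plan is to reduce the claim to a direct application of Theorem \ref{a p general term} (the equivalence between being an arithmetic progression of order $h$ and being given by a polynomial of degree at most $h$) combined with the formula (\ref{diff op}) for the iterated difference operator.

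First, I would invoke the equivalence (1) $\Leftrightarrow$ (5) of Theorem \ref{a p general term}: since $(a_i)_{i\geq 0}$ is an arithmetic progression of order $h$, there exist $\ga_h,\ga_{h-1},\ldots,\ga_0 \in G$ such that $a_i = \ga_h i^h + \cdots + \ga_1 i + \ga_0$ for every $i \geq 0$. Now define a new sequence
$$
b_i := i(i-1)\cdots(i-\ell)\, a_i \qquad (i \geq 0).
$$
The integer factor $i(i-1)\cdots(i-\ell)$ is a polynomial in $i$ of degree $\ell+1$, so $b_i$ has the form $\sum_j \eta_j i^j$ for suitable $\eta_j \in G$, with the sum running over $j \leq h + \ell + 1$. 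Thus $b = (b_i)_{i\geq 0}$ is polynomial in $i$ of degree at most $h+\ell+1$.

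Next comes the key observation: the hypothesis $\ell \in \{0,1,\ldots,n-2\}$ gives $h+\ell+1 \leq h+n-1 < h+n$. By Theorem \ref{a p general term} applied to $b$, the sequence $b$ is an arithmetic progression of order at most $h+n-1$, and in particular
$$
D^{h+n} b = 0.
$$
Evaluating at the initial index via formula (\ref{diff op}),
$$
0 = D^{h+n} b_0 = \sum_{i=0}^{h+n} (-1)^{h+n-i}\binom{h+n}{i} b_i = \sum_{i=0}^{h+n} (-1)^{h+n-i}\binom{h+n}{i} i(i-1)\cdots(i-\ell)\, a_i.
$$
Multiplying by $-1$ changes $(-1)^{h+n-i}$ into $(-1)^{h+n+1-i}$ and yields precisely the expression (\ref{factorial ap}), which is therefore $0$.

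There is essentially no obstacle here: the only thing to be careful about is the bookkeeping on the degree of the polynomial $b_i$ and the matching of the strict inequality $h+\ell+1 < h+n$ with the index $h+n$ at which we want the $(h+n)$-th finite difference to vanish. The restriction $\ell \leq n-2$ in the statement is exactly what guarantees this strict inequality, so the argument cannot be pushed to $\ell = n-1$, which is consistent with the statement.
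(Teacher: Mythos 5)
Your proof is correct, but it takes a genuinely different route from the paper's. You pass to the polynomial representation of $a$ (part (5) of Theorem \ref{a p general term}), observe that $b_i := i(i-1)\cdots(i-\ell)a_i$ is then a polynomial sequence of degree at most $h+\ell+1 \leq h+n-1$, and conclude that $D^{h+n}b_0=0$, which after a sign flip is exactly the displayed sum. The paper instead stays entirely inside the difference-operator formalism: it notes that the terms with $i\leq\ell$ vanish, rewrites
$\binom{h+n}{i}\,i(i-1)\cdots(i-\ell)=\binom{h+n-\ell-1}{i-\ell-1}\prod_{j=1}^{\ell+1}(h+n-j+1)$,
reindexes, and recognizes the remaining sum as $D^{h+n-\ell-1}$ applied to the shifted sequence $(a_{\ell+1+i})_{i\geq 0}$, which vanishes because an arithmetic progression of order $h$ is also one of order $h+n-\ell-2\geq h$. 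Your argument is shorter and makes the role of the hypothesis $\ell\leq n-2$ completely transparent (it is exactly the degree bound $h+\ell+1<h+n$). What the paper's computation buys is self-containedness at the level of binomial identities: it never invokes the polynomial characterization, which — as stated in Theorem \ref{a p general term} with $\ga_h=\frac{1}{h!}D^h a_0$ — tacitly requires dividing by integers in $G$, something not available in an arbitrary commutative group. Since the paper itself asserts that equivalence and the lemma is only applied in settings (reals, operator algebras) where such division is harmless, your reliance on it is legitimate here, but it is worth being aware that your proof inherits that implicit assumption while the paper's does not.
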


\begin{proof}
Fixed  $\ell \in \{ 0, 1, ... , n-2\}$. The equation (\ref{factorial ap}) is equivalent to
\begin{equation}\label{e-lema ap}
\sum_{i=\ell+1}^{h+n} {h+n \choose i} (-1)^{h+n+1-i} i (i-1) \cdots (i-\ell) a_i  \; .
\end{equation}
Note that
\begin{eqnarray}
{h+n \choose i} i(i-1) \cdots (i-\ell) &=& \frac{(h+n)!}{i!(h+n-i)!} i (i-1) \cdots (i-\ell)\nonumber \\[1pc]
& =& \frac{(h+n) \cdots (h+n-\ell)(h+n-\ell-1)!}{(i-\ell-1)!(h+n-i)!}\nonumber \\[1pc]
&=&{h+n-\ell -1 \choose i-\ell-1} \prod_{j=1}^{\ell+1} (h+n-j+1) \;. \nonumber
\end{eqnarray}
Then  (\ref{e-lema ap}) agrees with
\begin{eqnarray}
\prod_{j=1}^{\ell+1} (h+n-j+1) \sum_{i=\ell+1}^{h+n} {h+n-\ell-1 \choose i-\ell-1} (-1)^{h+n-i+1} a_i  \nonumber \\
= \prod_{j=1}^{\ell+1} (h+n-j+1)\sum_{i=0}^{h+n-\ell-1}{h+n-\ell-1\choose i} (-1)^{h+n-\ell-i} a_{\ell+1+i} \; . \nonumber\\
\end{eqnarray}
As $(a_i)_{i \geq 0}$ is an arithmetic progression of order $h$, we have that  $(a_i)_{i \geq 0}$ is  an arithmetic progression of order
$h+n-\ell-2$ for all $\ell\in \{0, 1, ..., n-2\}$; that is,
$$
\sum_{i=0}^{h+n-\ell-1} (-1)^{h+n-\ell-1-i} {h+n-\ell-1 \choose i}  a_i = 0 \; .
$$
Hence  we obtain the result.
\end{proof}

\begin{teorema}\label{product ap} \cite[Corollary 2.5]{duggal+muller}.
Let $(a_{i,j})_{i,j \geq 0}$ be a double sequence such that the sequence $(a_{i,j})_{j \geq 0}$ is an  arithmetic progression of order $k \geq 0$, for all $i \geq 0$, and the sequence $(a_{i,j})_{i \geq 0}$ is an arithmetic progression of order $h \geq 0$, for all $j \geq 0$. Then the diagonal sequence $(a_{i,i})_{i \geq 0}$ is an  arithmetic progression of order $k+h$.
\end{teorema}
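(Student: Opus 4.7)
The two hypotheses translate into the operator identities $D_1^{h+1}a = 0$ and $D_2^{k+1}a = 0$, where $D_1$ and $D_2$ denote the difference operator applied in the first and second index of the double sequence $a$, respectively; the goal is to show that the diagonal sequence $b_i := a_{i,i}$ satisfies $D^{h+k+1}b = 0$, which by definition means that $b$ is an arithmetic progression of order at most $h+k$.

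Let $\sigma_\nu = I + D_\nu$ ($\nu = 1,2$) be the shift operators in each index. Since $b_{i+1} = a_{i+1,i+1} = (\sigma_1 \sigma_2 a)_{i,i}$, a simple induction on $n$ yields
\[
D^n b_i = \bigl((\sigma_1\sigma_2 - I)^n a\bigr)_{i,i} \qquad (n \geq 0).
\]
The operators $D_1, D_2, \sigma_1, \sigma_2$ pairwise commute because the two indices are independent; so, using the decomposition $\sigma_1\sigma_2 - I = D_1 \sigma_2 + D_2$, the binomial theorem gives
\[
(\sigma_1\sigma_2 - I)^n = \sum_{r=0}^n {n \choose r} D_1^r \sigma_2^r D_2^{n-r}.
\]
Taking $n = h+k+1$, I would finish by splitting this sum according to whether $r \geq h+1$ or $r \leq h$: in the first case $D_1^r a = 0$ by the column hypothesis, while in the second case $n - r \geq k+1$ and $D_2^{n-r}a = 0$ by the row hypothesis, so every summand vanishes and $D^{h+k+1}b = 0$, as required.

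I do not foresee a substantive obstacle; the main technical care is in verifying the decomposition $\sigma_1\sigma_2 - I = D_1 \sigma_2 + D_2$ and the commutation relations before expanding. As an alternative route, the polynomial characterisation of Theorem~\ref{a p general term}(2) yields $a_{i,j} = \sum_{\ell=0}^k {j \choose \ell}\, D_2^\ell a_{i,0}$; since each $D_2^\ell a_{i,0}$ is an integer combination of the column sequences and the APs of order $\leq h$ form a subgroup of $G^{\mathbb{N}}$, each such coefficient is itself an AP of order at most $h$ in $i$, and substituting $j = i$ then displays $b$ as a sum of integer-valued binomial factors of degree $\leq k$ multiplied by APs of order $\leq h$, ultimately producing an AP of order at most $h + k$.
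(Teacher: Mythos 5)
Your argument is correct, and it takes a genuinely different route from the paper. The paper proves the theorem by substituting the Lagrange-type representation of Theorem~\ref{a p general term}(3) for each row, $a_{r,r}=\sum_{i=0}^{k}g(r,i,k)\,a_{r,i}$, re-expanding the degree-$k$ polynomials $g(r,i,k)$ in the falling-factorial basis $r(r-1)\cdots(r-j+1)$, and then invoking the ad hoc combinatorial Lemma~\ref{factorialg ap} to kill each resulting term; it also needs a separate (trivial) case $k=0$. You instead work entirely at the level of commuting operators on $G^{\mathbb{N}\times\mathbb{N}}$: the hypotheses become $D_1^{h+1}a=0$ and $D_2^{k+1}a=0$, the diagonal satisfies $D^n b_i=((\sigma_1\sigma_2-I)^n a)_{i,i}$, and the factorization
$$
\sigma_1\sigma_2-I=(I+D_1)(I+D_2)-I=D_1\sigma_2+D_2
$$
together with the binomial theorem gives
$$
(\sigma_1\sigma_2-I)^{h+k+1}=\sum_{r=0}^{h+k+1}{h+k+1 \choose r}D_1^{r}\sigma_2^{r}D_2^{h+k+1-r}\;,
$$
in which every term is annihilated either by $D_1^{h+1}$ (when $r\geq h+1$) or by $D_2^{k+1}$ (when $r\leq h$). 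I checked the induction giving $D^n b_i=((\sigma_1\sigma_2-I)^n a)_{i,i}$ and the commutation of $D_1,D_2,\sigma_1,\sigma_2$; both are sound, and the conclusion $D^{h+k+1}b=0$ is exactly the paper's definition of an arithmetic progression of order $h+k$. What your approach buys is a shorter, basis-free proof valid verbatim in any commutative group, with no case split and no need for Lemmas~\ref{lema combinatorio 2}, \ref{lema combinatorio 3} or \ref{factorialg ap}; what the paper's approach buys is an explicit formula expressing $D^{h+k+1}a_{r,r}$ through the row and column data, which is in the spirit of the rest of Section~3. Your alternative sketch at the end (expanding each row via Theorem~\ref{a p general term}(2) and using that the APs of order at most $h$ form a subgroup closed under multiplication by polynomials of degree at most $k$) is closer to the paper's method but still not identical; the first argument is the one worth keeping.
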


\begin{proof}
For $k=0$ is clear. Assume that $k>0$. Denote
$$
A :=\sum_{r=0}^{k+h+1}{k+h+1 \choose r}(-1)^{k+h+1-r} a_{r,r} \; .
$$
Let us prove that $A = 0$. From part (3) of Theorem \ref{a p general term} we obtain
\begin{eqnarray}
A &=& \sum_{r=0}^{k+h+1} {k+h+1 \choose r} (-1)^{k+h+1-r} \left( \sum_{i=0}^{k} g(r,i,k) a_{r,i} \right) \;\nonumber \\
 &=&\sum_{i=0}^{k} \sum_{r=0}^{k+h+1} {k+h+1 \choose r} (-1)^{k+h+1-r}  g(r,i,k) a_{r,i} \; ,\nonumber
\end{eqnarray}
where
$$
g(r,i,k) := (-1)^{k-i} \frac{r (r-1)  \cdots \overbrace{(r-i)} \cdots (r-k)}{i! (k-i)!} \;.
$$
By elementary properties of polynomials we have that there exist scalars $(\eta_{i,j})_{j=0}^{k}$ such that
\begin{equation}\label{polinomio ap}
r (r-1) \cdots \overbrace{(r-i)} \cdots (r-k)= \eta_{i,0}+ \eta_{i,1} r + \eta_{i,2} r(r-1) + \cdots  + \eta_{i,k} r(r-1) \cdots (r-k+1)
\end{equation}
for $i \in \{ 0, 1, \cdots, k\}$.  By equality (\ref{polinomio ap}) and Lemma \ref{factorialg ap} we obtain the result.
\end{proof}

\;

\subsection{Perturbation of arithmetic progressions by nilpotents on a ring}

\ \par

\;

In this subsection the setting is a ring $R$ with unity $e$, whose operations are denoted by $+$ and $\cdot$, being $\cdot$ not necessarily commutative. The main result is included in \cite{bmmn 2}.

Given an integer $n \geq 1$, an element $a \in R$ is called {\it $n$-nilpotent} if $a^n =0$ but $a^{n-1} \neq 0$.

In the next theorem we consider a sequence $(y^k x^k)_{k \geq 0}$ in $R$ which is an arithmetic progression of strict order $h$. By part (5) of Theorem \ref{a p strict}, for every $k \geq 0$,
\begin{equation}\label{pa ring}
y^k x^k = \sum_{\ell=0}^{h} c_\ell k^\ell \quad (c_h \neq 0) \; ,
\end{equation}
where $c_\ell \in R$ ($0 \leq \ell \leq h$) do not depend on $k$.

\begin{teorema}\label{ring perturbation} Let $R$ be a ring. Let $x,y \in R$ such that $(y^k x^k)_{k \geq 0}$ is an arithmetic progression of strict order $h$. Let $a,b \in R$ such that $a$ is $n$-nilpotent, $b$ is $m$-nilpotent, $ax=xa$ and $by=yb$. Then the sequence  $((y+b)^k (x+a)^k)_{k \geq 0}$ is an arithmetic progression of order $n+m+h-2$. Moreover, it is of strict order $n+m+h-2$ whenever $b^{m-1} y^{n-m} c_h a^{n-1} \neq 0$, if $m \leq n$, or whenever  $b^{m-1} c_h x^{m-n} a^{n-1}  \neq 0$, if $m > n$.
\end{teorema}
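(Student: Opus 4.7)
The plan is to expand $(x+a)^k$ and $(y+b)^k$ by the binomial theorem (valid since $a$ commutes with $x$ and $b$ commutes with $y$) and to truncate using $a^n = b^m = 0$, obtaining
$$
(x+a)^k = \sum_{i=0}^{n-1} {k \choose i} x^{k-i} a^i, \qquad (y+b)^k = \sum_{j=0}^{m-1} {k \choose j} y^{k-j} b^j,
$$
so that
$$
(y+b)^k (x+a)^k = \sum_{i=0}^{n-1}\sum_{j=0}^{m-1} {k \choose i}{k \choose j}\, y^{k-j} b^j x^{k-i} a^i.
$$

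For each pair $(i,j)$ I would rewrite the $R$-valued factor to isolate a shift of the given arithmetic progression. Using $by = yb$, pull $b^j$ to the left. If $i \leq j$, write $x^{k-i} = x^{k-j} x^{j-i}$ and use $ax = xa$ to obtain
$$
y^{k-j} b^j x^{k-i} a^i = b^j\, (y^{k-j} x^{k-j})\, a^i x^{j-i};
$$
if $i > j$, write $y^{k-j} = y^{i-j} y^{k-i}$ and apply $by=yb$ again to obtain
$$
y^{k-j} b^j x^{k-i} a^i = y^{i-j} b^j\, (y^{k-i} x^{k-i})\, a^i.
$$
In both cases the middle factor $y^s x^s$ (with $s = k - \max(i,j)$), viewed as a function of $k$, is by (\ref{pa ring}) a polynomial of degree $h$ with leading coefficient $c_h$.

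Since ${k \choose i}{k \choose j}$ is a polynomial of degree $i+j$ in $k$, the $(i,j)$-summand is a polynomial in $k$ of degree at most $i+j+h$. The maximum $i+j = (n-1)+(m-1) = n+m-2$ is attained uniquely at $(i,j) = (n-1,m-1)$, so the whole sum is a polynomial in $k$ of degree at most $n+m+h-2$, and Theorem~\ref{a p general term}(5) then identifies $((y+b)^k(x+a)^k)_{k \geq 0}$ as an arithmetic progression of order $n+m+h-2$. For the strict-order refinement I would extract the coefficient of $k^{n+m+h-2}$, which comes only from the top-degree pair. If $m \leq n$, using $by=yb$ to rewrite $y^{n-m} b^{m-1} = b^{m-1} y^{n-m}$ yields leading coefficient $\frac{1}{(n-1)!(m-1)!}\, b^{m-1} y^{n-m} c_h a^{n-1}$; if $m > n$, using $ax=xa$ to rewrite $a^{n-1} x^{m-n} = x^{m-n} a^{n-1}$ yields leading coefficient $\frac{1}{(n-1)!(m-1)!}\, b^{m-1} c_h x^{m-n} a^{n-1}$. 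In each case, nonvanishing of this element gives, via Theorem~\ref{a p strict}(5), strict order exactly $n+m+h-2$.

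The main obstacle is the non-commutativity of $R$: each rewriting step must respect left-to-right order except where $by=yb$ or $ax=xa$ permits a swap, and one must carefully track on which side each boundary factor ends up. In particular, the leading coefficient $c_h$ need not commute with any of $x,y,a,b$, which forces it into a fixed position surrounded by $b^{m-1}$, $y^{n-m}$ (or $x^{m-n}$), and $a^{n-1}$; this is exactly why the two cases $m \leq n$ and $m > n$ produce formally different nonvanishing conditions.
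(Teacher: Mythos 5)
Your proposal is correct and follows essentially the same route as the paper: binomial expansion with truncation by nilpotence, commuting factors so as to expose a shifted block $y^{s}x^{s}$ with $s=k-\max(i,j)$, substituting the polynomial $\sum_{\ell}c_{\ell}s^{\ell}$, and reading off the degree and the unique top-degree term at $(n-1,m-1)$; your case split on $i\le j$ versus $i>j$ is just the paper's internal split reorganized, and your normalized leading coefficients agree with its two nonvanishing conditions. The only cosmetic difference is that the paper writes the upper summation limits as $k\wedge(n-1)$ and $k\wedge(m-1)$ to keep all exponents nonnegative for small $k$, which your fixed limits $n-1$, $m-1$ implicitly assume away.
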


\begin{proof} Denote by $c \wedge d$ the minimum of $c$ and $d$. We have, for every $k \geq 0$,
$$
(y+b)^{k} (x+a)^k = \left( \sum_{i=0}^{k} {k \choose i} b^{i} y^{k-i} \right) \left( \sum_{j=0}^{k} {k \choose j}  x^{ k-j} a^j \right) \; .
$$
Consequently, for every $k \geq 0$,
\begin{equation}\label{+}
(y+b)^{k} (x+a)^k  = \sum_{i=0}^{k \wedge (m-1)} \sum_{j=0}^{k \wedge (n-1)} {k \choose i} {k \choose j} b^{i} y^{k-i}  x^{k-j} a^j \; .
\end{equation}
Consider two cases.

{\it Case 1. $m \leq n$.} Taking into account  (\ref{pa ring}),  then (\ref{+}) can be written in the following way:
\begin{eqnarray}
(y+b)^{k} (x+a)^k &=& \sum_{i=0}^{k \wedge (m-1)} \sum_{j=i}^{k \wedge (n-1)} {k \choose i} {k \choose j} b^{i} y^{j-i} y^{k-j}  x^{k-j} a^j  \nonumber \\
&+& \sum_{i=1}^{k \wedge (m-1)} \sum_{j = 0}^{i-1} {k \choose i} {k \choose j} b^{i} y^{k-i}   x^{k-i} x^{i-j} a^j \nonumber \\
&=& \sum_{i=0}^{k \wedge (m-1)} \sum_{j=i}^{k \wedge (n-1)} {k \choose i} {k \choose j} b^{i} y^{j-i} \left( \sum_{\ell = 0}^h c_\ell (k-j)^\ell \right) a^j \nonumber \\
&+& \sum_{i=1}^{k \wedge (m-1)} \sum_{j = 0}^{i-1} {k \choose i} {k \choose j} b^{i}  \left( \sum_{\ell = 0}^h c_\ell (k-i)^\ell \right) x^{i-j} a^j \nonumber \\
&=& \sum_{i=0}^{k \wedge (m-1)} \sum_{j = i}^{k \wedge (n-1)} \sum_{\ell = 0}^h {k \choose i} {k \choose j} (k-j)^\ell b^{i} y^{j-i} c_\ell a^j \nonumber \\
&+& \sum_{i=1}^{k \wedge (m-1)} \sum_{j = 0}^{i-1} \sum_{\ell = 0}^h  {k \choose i} {k \choose j} (k-i)^\ell  b^{i} c_\ell x^{i-j} a^j \; . \nonumber
\end{eqnarray}
We write:
$$
p_{i,j,\ell} (k) := {k \choose i} {k \choose j} (k-j)^\ell \quad , \quad u_{i,j,\ell} := b^{i} y^{j-i} c_\ell a^j \;  ,
$$
being $0 \leq i \leq k \wedge (m-1)$, $i \leq j \leq k \wedge (n-1)$ and $0 \leq \ell \leq h$; analogously,
$$
q_{i,j,\ell} (k) := {k \choose i} {k \choose j} (k-i)^\ell \quad , \quad v_{i,j,\ell} := b^{i}  c_\ell x^{i-j} a^j \; ,
$$
for $0 \leq j < i \leq k \wedge (m-1)$ and $0 \leq \ell \leq h$. Hence
$$
(y+b)^{k} (x+a)^k  = \sum_{i=0}^{k \wedge (m-1)} \sum_{i \leq j}^{k \wedge (n-1)} \sum_{\ell = 0}^h p_{i,j,\ell} (k) u_{i,j,\ell} + \sum_{i=1}^{k \wedge (m-1)} \sum_{j = 0}^{i-1} \sum_{\ell = 0}^h  q_{i,j,\ell} (k) v_{i,j,\ell} \; .
$$
Note that the polynomial $p_{i,j,\ell}(k)$ has degree $i+j+\ell$, which is maximum with value $m+n+h-2$; analogously, the degree of $q_{i,j,\ell}(k)$ is $i+j+\ell$, which is maximum with value $m+m+h-3$. Therefore $(y+b)^k (x+a)^k$ is a polynomial of degree less or equal to $m+n+h-2$. Actually  the degree is $m+n+h-2$ whenever the coefficient of $k^{m+n+h-2}$ is non-null; that is,
$$
u_{m-1,n-1,h} = b^{m-1} y^{n-m} c_h a^{n-1}  \neq 0 \; .
$$

{\it Case 2. $m>n$.} Analogously to Case 1, (\ref{+}) can be written
\begin{eqnarray}
(y+b)^{k} (x+a)^k &=& \sum_{i=0}^{k \wedge (n-1)} \sum_{j = i}^{k \wedge (n-1)} {k \choose i} {k \choose j} b^{i} y^{j-i} y^{k-j} x^{k-j} a^j \nonumber \\
&+& \sum_{j=0}^{k \wedge (n-1)} \sum_{i = j + 1}^{k \wedge (m-1)} {k \choose i} {k \choose j} b^{i} y^{k-i}  x^{k-i} x^{i-j} a^j \nonumber \\
&=& \sum_{i=0}^{k \wedge (n-1)} \sum_{j=i}^{k \wedge (n-1)} \sum_{\ell = 0}^h {k \choose i} {k \choose j} (k-j)^\ell b^{i} y^{j-i} c_\ell a^j \nonumber \\
&+& \sum_{j=0}^{k \wedge (n-1)} \sum_{i = j +1}^{k \wedge (m-1)} \sum_{\ell = 0}^h  {k \choose i} {k \choose j} (k-i)^\ell  b^{i} c_\ell x^{i-j} a^j \; . \nonumber
\end{eqnarray}
We write:
$$
p_{i,j,\ell} (k) := {k \choose i} {k \choose j} (k-j)^\ell \quad , \quad u_{i,j,\ell} := b^{i} y^{j-i} c_\ell a^j \;  ,
$$
being $0 \leq i \leq j \leq k \wedge (n-1)$ and $0 \leq \ell \leq h$; similarly,
$$
q_{i,j,\ell} (k) := {k \choose i} {k \choose j} (k-i)^\ell \quad , \quad v_{i,j,\ell} := b^{i} c_\ell x^{i-j} a^j \;   ,
$$
being $0 \leq j \leq k \wedge (n-1)$, $j+1 \leq i \leq k \wedge (m-1)$ and $0 \leq \ell \leq h$. Then
$$
(y+b)^{k} (x+a)^k  = \sum_{i=0}^{k \wedge (n-1)} \sum_{j=i}^{k \wedge (n-1)} \sum_{\ell = 0}^h p_{i,j,\ell} (k) u_{i,j,\ell} + \sum_{j=0}^{k \wedge (n - 1)} \sum_{i = j + 1}^{k \wedge (m-1)} \sum_{\ell = 0}^h  q_{i,j,\ell} (k) v_{i,j,\ell} \; .
$$
The degree of $p_{i,j,\ell}(k)$ is $i+j+\ell$, which is maximum with value $n+n+h-2$, and the degree of $q_{i,j,\ell}(k)$ is $i+j+\ell$, with maximum $m+n+h-2$. Hence $(y+b)^k (x+a)^k$ is a polynomial of degree less or equal to $m+n+h-2$. Really the degree is $m+n+h-2$ if the coefficient of $k^{m+n+h-2}$ is non-null; that is,
$$
v_{m-1,n-1,h} = b^{m-1} c_h x^{n-m} a^{n-1}  \neq 0 \; .
$$
Thus the proof is finished.
\end{proof}

\begin{corollary}  Let $R$ be a ring. Let $x \in R$ such that $(x^k)_{k \geq 0}$ is an arithmetic progression of strict order $h$ and let $a \in R$ be an $n$-nilpotent such that $ax=xa$. Then the sequence  $((x+a)^k)_{k \geq 0}$ is an arithmetic progression of order $n+h-1$; moreover, it is of strict order $n+h-1$ if $a^{n-1}c_h x^{n-1}\neq 0$, where
$$
x^k= \sum_{\ell = 0}^h c_{\ell} k^{\ell} \; .
$$
\end{corollary}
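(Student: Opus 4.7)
The approach is a direct one: specialise Theorem~\ref{ring perturbation} to the ``one perturbation'' case by taking $y := e$ (the unit of $R$) and $b := 0$. With these choices, $y^k x^k = x^k$, so the hypothesis on $(y^k x^k)_{k \geq 0}$ in Theorem~\ref{ring perturbation} is precisely the hypothesis of the corollary. The element $b = 0$ satisfies $b^1 = 0$ and $b^0 = e \neq 0$, so it is $1$-nilpotent; and $by = yb$ is automatic since $y = e$ is central. The commutation $ax = xa$ is given. Thus, taking $m := 1$, every hypothesis of Theorem~\ref{ring perturbation} is fulfilled.

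Feeding these choices into the first conclusion of the theorem, $((y+b)^k(x+a)^k)_{k \geq 0} = ((x+a)^k)_{k \geq 0}$ is an arithmetic progression of order $n + m + h - 2 = n + h - 1$, which is the first assertion.

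For the strict-order claim, since $m = 1 \leq n$, the sufficient condition in Theorem~\ref{ring perturbation} reduces to
$$
b^{m-1} y^{n-m} c_h a^{n-1} \;=\; e \cdot e \cdot c_h \cdot a^{n-1} \;=\; c_h a^{n-1} \neq 0 \; .
$$
By Theorem~\ref{a p strict}, $c_h = \frac{1}{h!} D^h x^0$ is a fixed linear combination of the powers $x^0, x^1, \dots, x^h$; since $x$ and $a$ commute, so do $c_h$ and $a$. Hence the corollary's hypothesis $a^{n-1} c_h x^{n-1} \neq 0$ implies in particular $c_h a^{n-1} \neq 0$ (if $c_h a^{n-1} = 0$ then right-multiplication by $x^{n-1}$ would yield $a^{n-1} c_h x^{n-1} = 0$), so Theorem~\ref{ring perturbation} delivers strict order $n + h - 1$.

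There is no substantive obstacle: the corollary is essentially a rewording of Theorem~\ref{ring perturbation} in the degenerate case $y = e$, $b = 0$. The only small points worth flagging are the verification that $b = 0$ counts as $1$-nilpotent (which uses that $R$ is unital), and the observation that the corollary's sufficient condition is a strengthening of, rather than identical to, the condition supplied by the theorem.
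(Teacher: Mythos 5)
Your proposal is correct and is exactly the intended derivation: the paper states this corollary without proof as the specialization of Theorem~\ref{ring perturbation} to $y=e$, $b=0$ (so $m=1$), giving order $n+1+h-2=n+h-1$. Your extra care in noting that $0$ is $1$-nilpotent in a unital ring and that the stated condition $a^{n-1}c_h x^{n-1}\neq 0$ is (via the commutation of $a$ with $c_h$, a rational integer combination of powers of $x$) a strengthening of the theorem's condition $c_h a^{n-1}\neq 0$ is a welcome clarification of a point the paper glosses over.
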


\;

\;

\section{NUMERICAL ARITHMETIC PROGRESSIONS}

\;

In this section we consider sequences of scalars, mainly sequences of positive numbers.

Throughout this section $h,k, \ell$ are integers and $q >0$ a real number.

\;

\subsection{Recursive equations}

\ \par

\;

The contain of this subsection is taken from  \cite[p. 251]{bdm};  see also \cite[p. 104]{agarwal} and \cite[Theorem 3.7]{k+p}.

The sequence $(a_n)_{n \geq 0}$ of scalars is an arithmetic progression of strict order $h$ if and only if, for every $n=0,1,2 ...$,
\begin{equation}\label{recursive 1}
D^{h+1} a_n = \sum_{k=0}^{h+1} (-1)^{h+1-k}  {h+1 \choose k} a_{n+k} =0 \quad \mbox{ and} \quad D^{h} a_n \neq 0 \; .
\end{equation}

Notice that (\ref{recursive 1}) is a recursive equation. Let us  introduce some classical results to solve this  type of
equations. We are interested in sequences $(y_n)_{n \geq 0}$
which verify the recursive equation
\begin{equation}\label{recursive 2}
y_{h+1+n} + \gamma_{h} y_{h+n} + \gamma_{h-1} y_{h-1+n} + \cdots + \gamma_{1} y_{1+n} + \gamma_{0} y_{n} = 0 \; ,
\end{equation}
for certain $h \geq 0$ and any $n \geq 0$, being $\gamma_i$ complex numbers ($0 \leq i \leq h$). The {\it characteristic polynomial}
of the equation (\ref{recursive 2}) is given by
$$
q(z) = z^{h+1} + \gamma_{h} z^{h} + \gamma_{h-1} z^{h-1} + \cdots + \gamma_1 z + \gamma_0 \; ,
$$
which can be written in the form
\begin{equation}\label{polynomial 1}
q(z) = (z-z_1)^{h_1} (z-z_2)^{h_2} \cdots (z-z_k)^{h_k}  \; ,
\end{equation}
where $h_1 + h_2 + \cdots + h_k = h+1$ and $z_i \neq z_g$ for $i \neq
g$. It is well known (see for example \cite[Theorem 3.7]{k+p} and
\cite[page 104]{agarwal}) that the set of all complex sequences
which  verify (\ref{recursive 2}) is a vectorial subspace of the
space $\mathbb{C}^\mathbb{N}$ of all complex sequences, it has
dimension $h+1$ and a basis is formed by the sequences
\begin{eqnarray}\label{basis 1}
(z_1^n)_{n \geq 0} , (nz_1^n)_{n \geq 0} , (n^2z_1^n)_{n \geq 0} , ... , (n^{h_1-1}z_1^n)_{n \geq 0} \, ,\nonumber  \\
(z_2^n)_{n \geq 0} , (nz_2^n)_{n \geq 0} , (n^2 z_2^n)_{n \geq 0} , ... , (n^{h_2-1} z_2^n)_{n \geq 0} \, ,\nonumber  \\
... ... ... ... ... ... ... ... ... ... ... ... ... ... ... ... ... ... ... ... ... ... ..\,\,\,\,\, ,\\
(z_j^n)_{n \geq 0} , (n z_j^n)_{n \geq 0} , (n^2 z_j^n)_{n \geq 0} , ... ,  (n^{h_j-1} z_j^n)_{n \geq 0} \, .\nonumber
\end{eqnarray}
Therefore there exists an identification between the recursive equation
(\ref{recursive 2}),  the characteristic polynomial (\ref{polynomial
1}), the subspace of sequences which satisfy the recursive equation
and its basis (\ref{basis 1}).

Consider now an arithmetic progression  $a = (a_n)_{n \geq 0}$ of order $h$  which verifies (\ref{recursive 1}), for every $n \geq 0$. The characteristic polynomial associated is $q_1(z) = (z-1)^{h+1}$, hence the sequence
$(a_n)_{n \geq 0}$ is a linear combination of the sequences $(1)_{n\geq 0}$, $(n)_{n \geq 0}$, $(n^2)_{n \geq 0}$, ...., $(n^{h})_{n\geq 0}$. This is another way to obtain the expression (\ref{term gral 3}). Note that the sequence $a$ has strict order $h$ if $a$ is not a linear combination of only the sequences $(1)_{n\geq 0}$, $(n)_{n \geq 0}$, $(n^2)_{n \geq 0}$, ...., $(n^{h-1})_{n\geq 0}$.

\;

\subsection{Subsequences of numerical sequences}

\ \par

\;

In the next result we consider a sequence $(a_n)_{n \geq 0}$ such that its subsequences $(a_{cn})_{n \geq 0}$ and $(a_{dn})_{n \geq 0}$ are arithmetic progressions. This result is essentially \cite[Theorem 3.6]{bdm}.

\begin{teorema}\label{sub a p} Let $a = (a_n)_{n \geq 0}$ be a numerical sequence. Suppose that $(a_{cn})_{n \geq 0}$ is an arithmetic progression of strict order $h$ and $(a_{dn})_{n \geq 0}$ is an arithmetic progressions of strict order $k \geq 0$, for $c,d \geq 1$ and $h,k \geq 0$. Then $(a_{en})_{n \geq 0}$ is an arithmetic progression of strict order $\ell$, being $e$ the  greatest common divisor of $c$ and $d$, and $\ell$ the minimum of $h$ and $k$.
\end{teorema}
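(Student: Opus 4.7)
The plan is to pass to the coprime case and then exploit the polynomial/recurrence characterisation of arithmetic progressions to recover $(a_{en})$ globally. Write $c=ec'$ and $d=ed'$ with $\gcd(c',d')=1$, and set $b_n:=a_{en}$, so the hypotheses become that $(b_{c'n})$ is an AP of strict order $h$ and $(b_{d'n})$ is an AP of strict order $k$; the goal is to prove that $(b_n)$ is an AP of strict order $\min(h,k)$. Applying Theorem~\ref{a p strict} yields polynomials $p,q$ of exact degrees $h,k$ with $b_{c'n}=p(n)$ and $b_{d'n}=q(n)$. Passing to the variable $m$, the polynomials $P(m):=p(m/c')$ of degree $h$ and $Q(m):=q(m/d')$ of degree $k$ satisfy $b_m=P(m)$ on $c'\mathbb{Z}_{\ge 0}$ and $b_m=Q(m)$ on $d'\mathbb{Z}_{\ge 0}$. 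Since $\gcd(c',d')=1$, the intersection $c'd'\mathbb{Z}_{\ge 0}$ is infinite and both $P,Q$ compute $b_m$ there; two polynomials agreeing on infinitely many values must coincide, so $P\equiv Q$, which forces $h=k=:\ell$ and produces a single polynomial $r:=P=Q$ of degree $\ell$ with $b_m=r(m)$ for every $m\in c'\mathbb{Z}_{\ge 0}\cup d'\mathbb{Z}_{\ge 0}$.

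The main step is upgrading this to the identity $b_m=r(m)$ for every $m\ge 0$. For this I plan to invoke the recursive-equation framework of Subsection~4.1: the condition that $(b_{c'n})$ is an AP of order $h$ encodes the $c'$-spaced linear recurrence on $(b_n)$ with characteristic polynomial $(z^{c'}-1)^{h+1}$, and similarly $(z^{d'}-1)^{k+1}$ arises from the $d'$-subsequence. Because $\gcd(c',d')=1$, the polynomials $z^{c'}-1$ and $z^{d'}-1$ share only the root $z=1$, so
\[
\gcd\bigl((z^{c'}-1)^{h+1},(z^{d'}-1)^{k+1}\bigr)=(z-1)^{\ell+1},
\]
which is precisely the characteristic polynomial (\ref{recursive 1}) of the recurrence defining APs of order $\ell$. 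Translating through the basis (\ref{basis 1}) of the solution space identifies $(b_n)$ as a solution of the $(z-1)^{\ell+1}$-recurrence and hence an AP of order $\ell$; this AP must agree with $r$ on all indices, since they already coincide on the infinite set $c'\mathbb{Z}_{\ge 0}$.

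Finally, strictness of the order follows by contradiction with Corollary~\ref{cor}: if $(b_n)$ were an AP of order $\ell-1$, then its subsequence $(b_{c'n})=(a_{cn})$ would have strict order at most $\ell-1<h$, contradicting the hypothesis. I expect the extension step above to be the main obstacle: a priori, the two AP hypotheses only constrain $b_m$ at multiples of $c'$ or of $d'$, so forcing the global polynomial identity requires either the characteristic-polynomial GCD argument sketched here, or a direct Bezout/Frobenius interpolation using representations $n=\alpha c'+\beta d'$ with $\alpha,\beta\ge 0$ (valid for every $n\ge (c'-1)(d'-1)$). Bridging this partial-to-global gap is the technical heart of the proof.
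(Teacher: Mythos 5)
Your proposal is correct and rests on the same key idea as the paper's proof: encode the two subsequence hypotheses as linear recurrences with characteristic polynomials $(z^{c}-1)^{h+1}$ and $(z^{d}-1)^{k+1}$ (in your version, $(z^{c'}-1)^{h+1}$ and $(z^{d'}-1)^{k+1}$ after dividing out $e$), and identify the intersection of the two solution spaces with the solution space of their greatest common divisor via the basis (\ref{basis 1}). Your preliminary reduction to the coprime case, the observation that the polynomial identity $P\equiv Q$ forces $h=k$, and the explicit strictness argument via Corollary~\ref{cor} are minor refinements of the same route (the last of which the paper in fact glosses over), not a different one.
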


\begin{proof} As $(a_{cn})_{n \geq 0}$ is an arithmetic progression of strict order $h$ satisfies the recursive equation
\begin{equation}\label{r isometry}
 \displaystyle \sum_{i=0}^{h+1} (-1)^{h+1-i} {h+1 \choose i}  a_{c(i+j)} = 0 \; ,
\end{equation}
for all $j\geq 0$. The equation (\ref{r isometry}) has  the characteristic polynomial $p_1(z):=q_c(z)^{h+1} :=(z^c - 1)^{h+1}$. We call
$V_c$ to the subspace of $\mathbb{C}^\mathbb{N}$ formed by all complex sequences which verify (\ref{r isometry}). Then   $V_c$ has dimension $\dim V_c = (h+1)c$.

Analogously, $(a_{dn})_{n \geq 0}$ satisfies the equation
\begin{equation}\label{s isometry}
\displaystyle \sum_{i=0}^{k+1} (-1)^{k+1-i} {k+1 \choose i}  a_{d(i+j)} = 0 \; ,
\end{equation}
for all $j\geq 0$. The characteristic polynomial of (\ref{s isometry}) is $p_2(z) := q_d(z)^{k+1} = (z^d-1)^{k+1}$. Now the set of all the sequences which verify (\ref{s isometry}) is a vectorial subspace  $V_{d}$ of $\mathbb{C}^\mathbb{N}$ and  $\dim V_{d} = d(k+1)$.

Therefore the  sequences which verify both equations (\ref{r isometry}) and (\ref{s isometry}) are the sequences in the subspace
$V_c \cap V_d$, those characteristic polynomial is the greatest common divisor of $q_c(z)^{h+1}$  and $q_d(z)^{k+1}$, which is the polynomial $q_e(z)^{\ell+1} := (z^e-1)^{\ell+1}$,
where $e$ is the greatest common divisor of $c$ and $d$, and $\ell$ is the minimum of $h$ and $k$. Consequently $(a_{en})_{n \geq 0}$ is an arithmetic progression of strict order $\ell$.
\end{proof}

\;

\subsection{Monotony of positive arithmetic progressions}

\ \par

\;

We say that the sequence $a= (a_n)_{n \geq 0}$ of real numbers is {\it positive} if $a_n >0$ for any $n \geq 0$. This subsection is dedicated to positive arithmetic progressions.

\begin{proposicion}\label{monotony} Let $a$ be an arithmetic progression of order $h$ of positive real numbers. Then  the sequence $a$  is eventually increasing; that is, there is a positive integer $n_0$ such that, for any integer $n \geq n_0$, we have that $a_n \leq a_{n+1}$. Moroever, if  $a$ is not constant, then
$$
\lim_{n \rightarrow \infty} a_n = \infty \; .
$$
\end{proposicion}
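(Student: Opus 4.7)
The plan is to exploit part (5) of Theorem \ref{a p general term}, which tells us that $a_n = p_a(n)$ for a polynomial $p_a$ of degree at most $h$ with real coefficients. Write
$$
p_a(n) = \ga_h n^h + \ga_{h-1} n^{h-1} + \cdots + \ga_1 n + \ga_0,
$$
and let $k \leq h$ be the exact degree of $p_a$ (so $\ga_k \neq 0$ if $k \geq 1$, and $p_a$ is the constant $\ga_0$ if $k = 0$).

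First I would dispatch the constant case: if $k=0$ then $a_n = a_0$ for all $n$, whence $a_n \leq a_{n+1}$ trivially. So I assume $k \geq 1$. The key observation is that positivity of the sequence forces the leading coefficient to be positive: since $a_n = \ga_k n^k + O(n^{k-1})$ as $n \to \infty$, if $\ga_k < 0$ the values $a_n$ would be negative for large $n$, contradicting $a_n > 0$ for all $n \geq 0$. Hence $\ga_k > 0$, and because a polynomial with positive leading coefficient and positive degree tends to $+\infty$, we conclude $\lim_{n \to \infty} a_n = \infty$, which is the second assertion.

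For the monotonicity, I would look at the difference sequence $D a_n = a_{n+1} - a_n = p_a(n+1) - p_a(n)$. A direct expansion shows this is again a polynomial in $n$, of degree exactly $k-1$, whose leading coefficient is $k\ga_k > 0$. Therefore $Da_n > 0$ for all $n$ sufficiently large, which says precisely that $a_n < a_{n+1}$ eventually, proving the first part of the statement. (If one prefers to avoid the single degenerate sub-case $k=1$ with $\ga_1 > 0$, note that $Da_n = \ga_1 > 0$ everywhere, so monotonicity holds from $n_0 = 0$.)

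The argument is essentially routine once we invoke Theorem \ref{a p general term}; no single step is a serious obstacle. The only point that must be handled carefully is the implication ``all $a_n > 0$ $\Longrightarrow$ $\ga_k > 0$'', which relies on the asymptotic behavior of a polynomial and not merely on the values at a few initial $n$. This also explains why the hypothesis ``positive'' cannot be weakened to ``nonnegative at $n=0$'' or similar: the full infinite-tail positivity is what pins down the sign of the leading coefficient.
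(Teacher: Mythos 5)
Your proposal is correct and takes essentially the same route as the paper: represent $a_n$ as a polynomial via Theorem \ref{a p general term}(5), observe that positivity of the sequence forces the leading coefficient to be positive, and deduce both the divergence and the eventual monotonicity. The paper states these last two steps without justification, whereas you supply the details (the asymptotic argument for $\ga_k>0$ and the computation that $Da_n$ is a polynomial of degree $k-1$ with leading coefficient $k\ga_k>0$), so your write-up is a fleshed-out version of the same argument.
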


\begin{proof} If $a$ is constant the result is clear. Assume that $a$ is not constant, so is an arithmetic progression of strict order $h \geq 1$. The sequence $(a_n)_{n \geq 0}$ satisfies
(\ref{term gral 3}) with positive leader coefficient $\g_{h}$, so $a_n \longrightarrow \infty$ as $n \longrightarrow \infty$ and it is eventually increasing.
\end{proof}

\begin{example} {\rm The sequence $a=(a_n)_{n \geq 0}$ of natural numbers, given by $a_n = n^2 - 2n + 2$ ($n \geq 0$) is a positive arithmetic progression of strict order $2$. The first terms of $a$ are $2,1,2,5,10...$, hence $a$ is not monotone.
}
\end{example}

\begin{example} {\rm The sequence of natural numbers $a=(a_n)_{n \geq 0} = (n^3 - 5n^2 +8n +1)_{n \geq 0}$ is a positive arithmetic progression of strict order $3$, is increasing, but is not strictly increasing because $a_1=a_2=5$: $a = (1,5,5,7,17... )$.
}
\end{example}

\;

\subsection{Powers of positive sequences}

\ \par

\;

Given a positive  sequence $a= (a_n)_{n \geq 0}$ and a real number $q>0$, we define the sequence $a^q := (a_n^q)_{n \geq 0}$.

\begin{proposicion}\label{prop r+q} Let $a= (a_n)_{n \geq 0}$ be a positive sequence, $q,r >0$ and $k,h \geq 0$. If $a^q$ is an arithmetic progression of strict order $k$ and if $a^r$ is an arithmetic progression of strict order $h$, then $a^{r+q}$  is an arithmetic progression of strict order $k+h$.
\end{proposicion}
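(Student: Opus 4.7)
The plan is to reduce the statement to a fact about polynomial degrees over $\mathbb{R}$ by invoking the characterization of strict order given in Theorem \ref{a p strict}, part (5). Since $a^q$ is an arithmetic progression of strict order $k$, there exists a polynomial $p$ of degree exactly $k$ with real coefficients such that $a_n^q = p(n)$ for every $n \geq 0$, with nonzero leading coefficient $\alpha_k = \frac{1}{k!} D^k (a^q)_0 \neq 0$. Analogously, since $a^r$ is of strict order $h$, there exists a polynomial $P$ of degree exactly $h$ with $a_n^r = P(n)$ and nonzero leading coefficient $\beta_h = \frac{1}{h!} D^h (a^r)_0 \neq 0$.

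Next, I would simply multiply: since $a_n^{r+q} = a_n^q \cdot a_n^r = p(n) P(n)$ for every $n \geq 0$, the sequence $a^{r+q}$ is given by a polynomial in $n$, namely $p \cdot P$. Its degree is at most $k+h$, and because we work over $\mathbb{R}$ (a field, hence an integral domain), the leading coefficient of $p \cdot P$ equals $\alpha_k \beta_h \neq 0$. Therefore $p \cdot P$ has exact degree $k+h$.

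Applying Theorem \ref{a p strict}, part (5), in the reverse direction, this polynomial representation of $a^{r+q}$ with exact degree $k+h$ forces $a^{r+q}$ to be an arithmetic progression of strict order $k+h$.

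There is essentially no obstacle: the only point that requires attention is that the product of two real polynomials of exact degrees $k$ and $h$ has exact degree $k+h$, which relies on the positivity (hence non-vanishing in $\mathbb{R}$) guaranteed by the hypothesis that $a$ is a positive sequence together with the nonvanishing of the leading coefficients provided by strict order. One could alternatively derive the upper bound $k+h$ from Theorem \ref{product ap} applied to the double sequence $a_{i,j} := a_i^q \cdot a_j^r$, whose rows and columns are arithmetic progressions of orders $h$ and $k$ respectively, so that its diagonal $(a_i^{r+q})_{i \geq 0}$ is an arithmetic progression of order $k+h$; but this route would still require a separate leading-coefficient computation to promote ``order'' to ``strict order,'' so the direct polynomial argument is the cleanest.
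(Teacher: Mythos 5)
Your proposal is correct and is essentially identical to the paper's proof: both write $a_n^q$ and $a_n^r$ as polynomials of exact degrees $k$ and $h$ via the strict-order characterization, multiply them, and observe that the leading coefficient $\alpha_k\beta_h$ of the product is nonzero, so $a^{r+q}$ has strict order $k+h$. The alternative route through Theorem \ref{product ap} that you mention is not used by the paper, and your chosen direct argument matches theirs.
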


\begin{proof} The polynomials associated to $a^q$ and $a^r$ are, respectively,
$$
a_n^q = \alpha_k n^k + \alpha_{k-1} n^{k-1} + \cdots + \alpha_0 \quad  ,  \quad a_n^r = \beta_h n^h + \beta_{h-1} n^{h-1} + \cdots + \beta_0 \quad (\alpha_k \neq 0 , \beta_h \neq 0) \; .
$$
Then
$$
a_n^{r+q} = \alpha_k \beta_h n^{k+h} +  \cdots + \alpha_0 \beta_0 \quad (\alpha_k  \beta_h \neq 0) \; .
$$
Therefore $a^{r+q}$  is an arithmetic progression of strict order $k+h$.
\end{proof}

\begin{proposicion}\label{prop rk=hq} \cite[Proposition 4.1]{hms} Let $a= (a_n)_{n \geq 0}$ be a positive sequence, $q,r >0$ and $k,h \geq 0$. If $a^q$ is an arithmetic progression of strict order $k$ and if $a^r$ is an arithmetic progression of strict order $h$, then
\begin{equation}\label{rk=hq}
rk = hq \; .
\end{equation}
\end{proposicion}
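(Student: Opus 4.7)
The plan is to exploit the fundamental identity $a_n^{qr} = (a_n^q)^r = (a_n^r)^q$, which holds for every $n \geq 0$ because $a_n > 0$, and then compare the asymptotic growth of the two sides. By Theorem \ref{a p strict} part (5), the hypotheses give polynomial representations
\[
a_n^q = \alpha_k n^k + \alpha_{k-1} n^{k-1} + \cdots + \alpha_0, \qquad a_n^r = \beta_h n^h + \beta_{h-1} n^{h-1} + \cdots + \beta_0,
\]
with $\alpha_k \neq 0$ and $\beta_h \neq 0$.

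First I would handle the degenerate cases. If $k=0$, then $a^q$ is a constant sequence, and since $t \mapsto t^{1/q}$ is injective on positive reals, $a$ itself is constant; hence $a^r$ is constant and $h=0$, so $rk=0=hq$. By symmetry the same happens if $h=0$. So I may assume $k, h \geq 1$. Since $a_n^q > 0$ for every $n$ and $k \geq 1$, the leading coefficient $\alpha_k$ must be positive (otherwise $a_n^q \to -\infty$), and similarly $\beta_h > 0$; this also follows from Proposition \ref{monotony}, which forces positive leading coefficient.

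The main step is then the asymptotic comparison. Applying the identity $(a_n^q)^r = (a_n^r)^q$ to the polynomial expressions yields
\[
\bigl(\alpha_k n^k + \alpha_{k-1} n^{k-1} + \cdots + \alpha_0\bigr)^{r} = \bigl(\beta_h n^h + \beta_{h-1} n^{h-1} + \cdots + \beta_0\bigr)^{q}
\]
for every $n \geq 0$. Factoring out the leading power gives, as $n \to \infty$,
\[
\alpha_k^{r} \, n^{kr} \bigl(1 + O(1/n)\bigr) = \beta_h^{q} \, n^{hq} \bigl(1 + O(1/n)\bigr).
\]
Taking logarithms, dividing by $\log n$, and letting $n \to \infty$ produces $kr = hq$ (and as a by-product $\alpha_k^r = \beta_h^q$).

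I do not expect any serious obstacle. The only subtle point is justifying that $\alpha_k$ and $\beta_h$ are positive so that the fractional powers on both sides are real and well-defined, and that the degenerate orders $k=0$ or $h=0$ be treated separately; both are handled in a single line using positivity of $a$ and injectivity of $t \mapsto t^{1/q}$.
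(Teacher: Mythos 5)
Your proposal is correct and follows essentially the same route as the paper: both use the polynomial representations from Theorem \ref{a p strict}, dispose of the case $k=0 \Leftrightarrow h=0$ via constancy of $a$, and then deduce $rk=hq$ by computing the growth rate of $a_n^{qr}=(a_n^q)^r=(a_n^r)^q$ in two ways (the paper compares the finite positive limits of $a_n^{qr}/n^{rk}$ and $a_n^{rq}/n^{hq}$, which is the same asymptotic comparison you carry out with logarithms). No gaps.
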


\begin{proof} Consider  the polynomials associated to the sequences $a^q$ and $a^r$, respectively: for all integers $n \geq 0$,
$$
a_n^q = \alpha_k n^k + \alpha_{k-1} n^{k-1} + \cdots + \alpha_1 n + \alpha_{0} \quad (\alpha_k \neq 0) \; ,
$$
$$
a_n^r = \beta_h n^h + \beta_{h-1} n^{h-1} + \cdots + \beta_1 n + \beta_{0} \quad (\beta_h \neq 0) \; .
$$
Note that $k=0 \Longleftrightarrow h=0$, hence (\ref{rk=hq}) holds. Assume $k,h \geq 1$. In this case there exist
$$
\lim_{n \rightarrow \infty} \frac{a_n^q}{n^k} = \alpha_k > 0 \quad \mbox{ and }  \quad \lim_{n \rightarrow \infty} \frac{a_n^r}{n^h} = \beta_h > 0 \; .
$$
Thus we have the equality
$$
\lim_{n \rightarrow \infty} \frac{a_n^{qr}}{n^{rk}} = \lim_{n \rightarrow \infty} \frac{a_n^{rq}}{n^{hq}} \; ,
$$
so $kr=hq$.
\end{proof}

Taking $k,h \geq 1$ in the statement of Proposition \ref{prop rk=hq}, hence $a$ is non-constant, it is possible to improve the result.

\begin{proposicion}\label{prop c d} \cite[Lemma 4.2]{hms} Let $a= (a_n)_{n \geq 0}$ be a non-constant positive sequence, $q,r >0$  and $k,h \geq 1$. If $a^q$ is an arithmetic progression of strict order $k$ and if $a^r$ is an arithmetic progression of strict order $h$, then $a^t$ is an arithmetic progression of strict order $d$, where $d = gcd \{k,h\}$ and
    \begin{equation}\label{gcd}
t = \frac{qd}{k} = \frac{rd}{h} \; .
\end{equation}
\end{proposicion}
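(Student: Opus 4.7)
The plan is to combine Proposition \ref{prop rk=hq} with a polynomial factorization argument. By Proposition \ref{prop rk=hq} we have $rk=hq$, so $q/k = r/h$, and this common ratio must equal $t/d$ (since $t=qd/k=rd/h$). Writing $k=dk'$ and $h=dh'$ with $\gcd(k',h')=1$, we get $q=tk'$ and $r=th'$. Our goal is to exhibit a polynomial $g$ of degree exactly $d$ with $g(n)=a_n^t$ for every $n\geq 0$, so that $a^t$ is an arithmetic progression of strict order $d$ by part (5) of Theorem \ref{a p strict}.

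Let $p,\tilde p\in\mathbb{R}[x]$ be the polynomials associated to $a^q$ and $a^r$ respectively (they exist by Theorem \ref{a p strict}(5)); both have positive leading coefficients because $a_n^q,a_n^r>0$ for all $n$. The identity
$$a_n^{qh'} = a_n^{rk'}$$
(both sides equal $a_n^{tk'h'd}$) yields $p(n)^{h'}=\tilde p(n)^{k'}$ for every $n\geq 0$, and since two polynomials agreeing at infinitely many points are equal, $p(x)^{h'}=\tilde p(x)^{k'}$ in $\mathbb{R}[x]$.

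The central step is to extract a polynomial $g\in\mathbb{R}[x]$ with $p=g^{k'}$. Factor $p(x)=c\prod_i(x-\alpha_i)^{m_i}$ over $\mathbb{C}$ with $c>0$. Unique factorization in $\mathbb{C}[x]$, applied to $p^{h'}=\tilde p^{k'}$, forces $k'\mid m_ih'$ for each $i$; combined with $\gcd(k',h')=1$ this gives $k'\mid m_i$ for every $i$. Writing $m_i=k'\mu_i$, setting $g(x):=c^{1/k'}\prod_i(x-\alpha_i)^{\mu_i}$ (which lies in $\mathbb{R}[x]$, since complex conjugate roots of $p$ occur with equal multiplicity), one checks $g^{k'}=p$ and $g^{h'}=\tilde p$, with $\deg g = (\sum_i m_i)/k' = k/k' = d$.

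Finally I need $a_n^t=g(n)$, i.e., that $g(n)>0$ at every integer $n\geq 0$ (so that $g(n)$ really is the positive $k'$-th root of $a_n^q$). Since $\gcd(k',h')=1$, at least one of $k'$, $h'$ is odd: if $k'$ is odd, $g(n)$ has the sign of $g(n)^{k'}=p(n)=a_n^q>0$; if instead $h'$ is odd, $g(n)$ has the sign of $g(n)^{h'}=\tilde p(n)=a_n^r>0$. Either way $g(n)>0$, so $a_n^t=g(n)$ is a polynomial of degree exactly $d$, proving the claim. The main obstacle I anticipate is the factorization step — in particular, justifying the descent from $p^{h'}=\tilde p^{k'}$ to $p=g^{k'}$ and verifying that the resulting $g$ has real coefficients and the correct sign on nonnegative integers; the existence of $\tilde p$ is what rules out the pathological sign-change scenarios that could otherwise arise.
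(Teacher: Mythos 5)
Your proof is correct and follows essentially the same route as the paper's: derive $rk=hq$ from Proposition \ref{prop rk=hq}, turn the identity $a_n^{qh'}=a_n^{rk'}$ into the polynomial equation $p^{h'}=\tilde p^{\,k'}$, and use unique factorization of root multiplicities together with $\gcd(k',h')=1$ to extract a degree-$d$ polynomial $g$ with $g(n)=a_n^t$. You are in fact more careful than the paper about the leading coefficient, the realness of $g$, and the positivity of $g(n)$ on the integers; the only slip is the harmless typo $a_n^{tk'h'd}$, which should read $a_n^{tk'h'}$.
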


\begin{proof} Let $P := p_{a^q}$ and $Q := p_{a^r}$ the polynomials associated to the sequences $a^q$ and $a^r$, respectively. For any integer $n \geq 0$,
$$
P (n) = a_n^q = \alpha_k n^k + \alpha_{k-1} n^{k-1} + \cdots + \alpha_1 n + \alpha_{0} \quad (\alpha_k \neq 0) \; ,
$$
$$
Q (n) = a_n^r = \beta_h n^h + \beta_{h-1} n^{h-1} + \cdots + \beta_1 n + \beta_{0} \quad (\beta_h \neq 0) \; .
$$
From Proposition \ref{prop rk=hq} we obtain $rk=hq$. Hence
$$
P(n)^h = a_n^{qh} = a_n^{kr} = Q(n)^k \; .
$$
Therefore $P^h = Q^k$, so $P$ and $Q$ have the same zeroes. If $z_i$ ($1 \leq i \leq \ell$) is a common zero with multiplicity $u_i$ in $P$ and multiplicity  $v_i$ in $Q$, then $u_ih=v_ik$. Moreover, if $d=gcd \{ k,h \}$, then we obtain that $k/d$ is divisor of $u_i$. Thus, for certain integers $w_i$ ($1 \leq i \leq \ell$), for all $n \geq 0$,
$$
P(n) = (n-z_1)^{u_1} ... (n-z_\ell)^{u_\ell} = \left( (n-z_1)^{w_1} ... (n-z_\ell)^{w_\ell}  \right)^{k/d} = R(n)^{k/d} \; ,
$$
where $R$ is a polynomial of degree $d$. Consequently
$$
R(n) = P(n)^{d/k} = a_n^{qd/k} = a_n^t  \; ,
$$
being $t = qd/k$. Hence $a^t$ is an arithmetic progression of strict order $d$. As $rk=hq$, we can write $t = rd/h$.
\end{proof}

\begin{proposicion}\label{prop 3 options} \cite[Proposition 4.3]{hms} Let $a= (a_n)_{n \geq 0}$ be a positive sequence. Then $a$ satisfies exactly one of the following assertions:
\begin{enumerate}
\item $a^q$ is not an arithmetic progression of order $h$, for all $q>0$  and $h \geq 0$.
\item $a$ is constant; hence $a^q$ is an arithmetic progression of order $h$, for all $q>0$ and $h \geq 0$.
\item There are unique integer $\ell \geq 1$ and real $s > 0$ such that, for every $k = 1,2,3...$, the sequence $a^{ks}$ is an arithmetic progression of strict order $k\ell$. Moreover, if $a^q$ is an arithmetic progression of strict order $h$, then $q=ks$ and $h=k\ell$, for some $k=1,2,3...$
\end{enumerate}
\end{proposicion}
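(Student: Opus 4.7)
The plan is to first verify that the three cases are mutually exclusive, then show that the failure of both (1) and (2) forces (3). Exclusivity is immediate: in case (2), $a^q$ is an AP of strict order $0$ for every $q$, while in case (3), $a^s$ is an AP of strict order $\ell \geq 1$; and (1) directly contradicts the existence hypothesis present in both (2) and (3).

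Assume now that $a$ is non-constant and some power $a^{q_0}$ is an AP of strict order $h_0$. Because $x \mapsto x^{q_0}$ is injective on $(0,\infty)$, an AP of strict order $0$ would force $a$ constant, so $h_0 \geq 1$. Introduce
\[
\Sigma := \{(q,h) : q > 0,\ h \geq 1,\ a^q \text{ is an AP of strict order } h\},
\]
which is non-empty. By Proposition \ref{prop rk=hq}, any two pairs $(q,h), (r,k) \in \Sigma$ satisfy $q/h = r/k$; call this common value $\rho > 0$. Set $L := \{h : (q,h) \in \Sigma \text{ for some } q\}$ and $\ell := \min L$. Proposition \ref{prop c d} shows that $L$ is closed under pairwise gcd: for $h, k \in L$ the proposition produces $t > 0$ with $(t, \gcd(h,k)) \in \Sigma$. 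Applying this with $k = \ell$ gives $\gcd(\ell, h) \in L$, and minimality of $\ell$ forces $\gcd(\ell, h) = \ell$, so $\ell \mid h$ for every $h \in L$. Setting $s := \rho \ell$, we have $(s, \ell) \in \Sigma$.

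Now I would invoke Proposition \ref{prop r+q} by induction on $k$ to obtain that $a^{ks}$ is an AP of strict order $k\ell$ for every $k \geq 1$: the inductive step combines $a^{(k-1)s}$ of strict order $(k-1)\ell$ with $a^s$ of strict order $\ell$ to produce $a^{ks}$ of strict order $k\ell$. For the converse clause of (3), if $a^q$ is an AP of strict order $h$, then $h \geq 1$ and $(q,h) \in \Sigma$; divisibility yields $h = k\ell$ and the common ratio yields $q = \rho h = ks$. Uniqueness of $\ell$ is built into $\ell = \min L$; uniqueness of $s$ follows because any $s'$ with $a^{s'}$ an AP of strict order $\ell$ must satisfy $s'/\ell = \rho$, hence $s' = s$.

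The main obstacle is the structural step that pins down $\ell$: showing that the set $L$ of strict orders which arise is closed under gcd, so that its minimum element divides every other element. Once Proposition \ref{prop c d} supplies that closure, the rest is essentially bookkeeping on top of Propositions \ref{prop rk=hq} and \ref{prop r+q}, together with the injectivity of real powers on positive numbers.
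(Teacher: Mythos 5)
Your proposal is correct and follows essentially the same route as the paper: take $\ell$ minimal among strict orders that occur, use Proposition \ref{prop rk=hq} to pin down the common ratio $q/h$ (hence $s$), and use Proposition \ref{prop c d} together with minimality of $\ell$ to force $\gcd(\ell,h)=\ell$ and hence the divisibility $\ell \mid h$ in the converse clause. Your only cosmetic deviation is obtaining the strict order of $a^{ks}$ by inducting with Proposition \ref{prop r+q} rather than raising the polynomial $p_{a^s}$ to the $k$-th power directly, which is the same computation packaged as a lemma.
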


\begin{proof} Suppose that (1) and (2) fail, hence $a$ is not constant. Let $\ell \geq 1$ the least integer such that $a^s$ is an arithmetic progression of strict order $\ell$ for some $s>0$. From Proposition \ref{prop rk=hq} we obtain that $s$ is unique. Let $P := p_{a^s}$ be the polynomial of degree $\ell$ associated to the sequence $a^s$. Then, for every $k \geq 1$, the polynomial $P^k$ has degree $k\ell$ and
$$
P (n)^k = a_n^{ks}  \; ,
$$
for $n \geq 0$. Thus $a^{ks}$ is an arithmetic progression of strict order  $k\ell$. Conversely, if $a^q$ is an arithmetic progression of strict order $h \geq 1$, then $a^t$ is an arithmetic progression of strict order $d$, where $d = gcd \{ \ell,h \}$ and $t = \frac{qd}{h} = \frac{sd}{\ell}$. From the minimality of $\ell$ we have that $d=\ell$ and then there is $k = \frac{q}{s} = \frac{h}{\ell} = 1,2,3...$ such that $h=k\ell$ and $q=ks$.
\end{proof}

The Proposition  \ref{prop 3 options} leads to following notion.

\begin{definition}\label{a p proper}   Let $s,q>0$ be real numbers and $\ell,h > 0$ be integers. Given a non-constant positive sequence $a$,  we say that $a$ is an {\rm arithmetic progression of proper order} $(s,\ell)$ whenever $a^s$ is an  arithmetic progression of strict order $\ell$ and if $a^q$ is an arithmetic progression of strict order $h$, then $s \leq q$ and $\ell \leq h$.
\end{definition}

Notice that, if some power $a^q$ of a positive sequence $a$ is an arithmetic progression, then there exist a real $s>0$  and an integer $\ell \geq 1$ such that $a$ is an arithmetic progression of proper order $(s,\ell)$. Moreover $a^{ks}$ is of strict order $k \ell$ and of order $h \geq k \ell$, for all integer $k \geq 1$ and any integer $h$.

\begin{center}
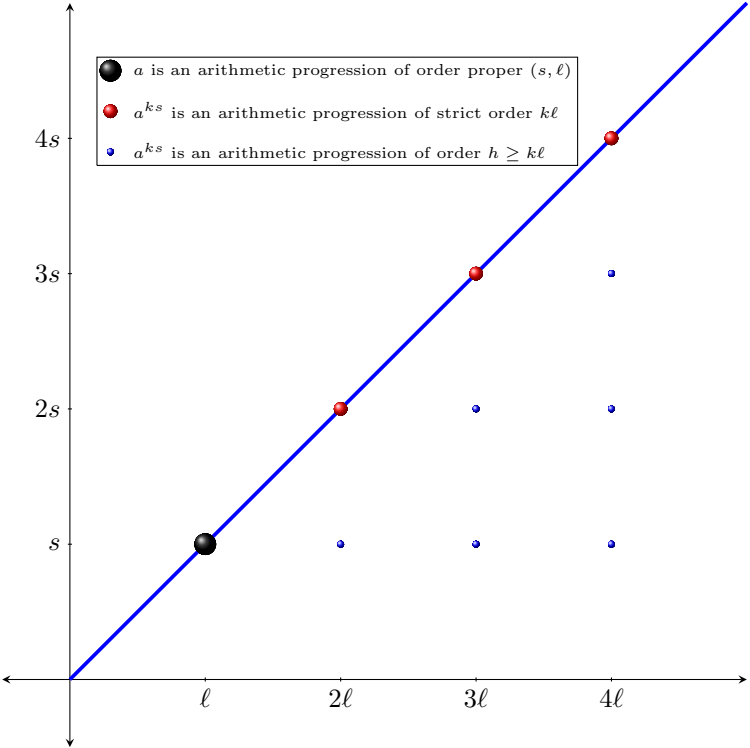
\begin{figure}[ht]
\begin{tikzpicture}[scale=0.18,>=stealth]
 \draw [<->] (0,-5)--(0,50);
 \draw [<->] (-5,0)--(50,0);
 \draw [blue,double=blue] plot [domain=0:50] (\x,\x);
\node[below] at (10,0) {$\ell $};
\draw (10,-0.15)--(10,0.15);
\node[below] at (20,0) {$2\ell $};
\draw (20,-0.15)--(20,0.15);
\node[below] at (30,0) {$3\ell $};
\draw (30,-0.15)--(30,0.15);
\node[below] at (40,0) {$4\ell $};
\draw (40,-0.15)--(40,0.15);
\node[left] at (0,10) {$s $};
\draw (-0.15,10)--(0.15,10);
\node[left] at (0,20) {$2s $};
\draw (-0.15,20)--(0.15,20);
\node[left] at (0,30) {$3s $};
\draw (-0.15,30)--(0.15,30);
\node[left] at (0,40) {$4s $};
\draw (-0.15,40)--(0.15,40);
\shade[shading=ball,ball color=red] (20,20) circle (.5);
\shade[shading=ball,ball color=red] (30,30) circle (.5);
\shade[shading=ball,ball color=red] (40,40) circle (.5);
\shade[shading=ball,ball color=blue] (20,10) circle (.25);
\shade[shading=ball,ball color=blue] (30,10) circle (.25);
\shade[shading=ball,ball color=blue] (40,10) circle (.25);
\shade[shading=ball,ball color=blue] (30,20) circle (.25);
\shade[shading=ball,ball color=blue] (40,20) circle (.25);
\shade[shading=ball,ball color=blue] (40,30) circle (.25);
\shade[shading=diamond,ball color=black] (10,10) circle (.8);
\shade[shading=diamond,ball color=black] (3,45) circle (.8);
\node[right] at (4,45) {\tiny $a$ is an arithmetic progression of order proper $(s,\ell)$};
\shade[shading=ball,ball color=red] (3,42) circle (.5);
\node[right] at (4,42) {\tiny  $a^{ks}$ is an arithmetic progression of strict order  $k\ell$};
\shade[shading=ball,ball color=blue] (3,39) circle (.25);
\node[right] at (4,39) {\tiny $a^{ks}$ is an arithmetic progression of order  $h\geq k\ell$};
\draw (2,38)  -- (37.5,38) -- (37.5,46) -- (2,46) -- (2,38);
%\addlegendentry{Case 1}
%\addlegendentry{Case 1}
%\draw[color=black] (20,20) circle (.5);
%\node[\tstar{0.1}{1}{20}{0}] at (10,10);
 \end{tikzpicture}
 \caption{Graphical  interpretation of the order of a non-constant arithmetic progression}
 \end{figure}
\end{center}

\begin{remark}\label{pi} {\rm For a positive sequence $a$, in \cite{hms} the authors consider the sets
$$
\pi(a) :=  \{ (h,q) : h \geq 0 \;  q >  0 \; a^q \mbox{ is an arithmetic progression of order } h \} \; ,
$$
$$
\widehat{\pi} (a) :=  \{ (h,q) : h \geq 0 \; q >  0 \; a^q \mbox{ is an arithmetic progression of strict order } h \} \; .
$$
Proposition \ref{prop 3 options} can be formulated in the following form: the positive sequence $a$ verifies exactly one of the following:
\begin{enumerate}
\item $\pi(a) = \widehat{\pi} (a) = \emptyset$.
\item $\widehat{\pi} (a) = \{ 0 \} \times ]0,\infty[$ and $\pi(a) = \{ 0, 1, 2, 3... \} \times ]0,\infty[$
\item there are unique integer $\ell \geq 1$ and real $s > 0$ such that
$$
\widehat{\pi} (a) = \{ (k\ell,ks): k=1,2,3... \} \mbox{ and } \pi(a) = \{(h,ks) : k=1,2,3... ; h \geq k\ell\} \; .
$$
\end{enumerate}
}
\end{remark}

\;

\;

\section{APPLICATIONS TO $(m,q)$-ISOMETRIES}

\;

Now we apply the results of the previous sections to $(m,q)$-isometries. Consider three settings: metric spaces, Banach spaces and Hilbert spaces.

In this section $q,r,s$ denote real numbers and $m,n,h,k,\ell$ are integers.

\;

\subsection{$(m,q)$-isometries on metric spaces}

\ \par

\;

Throughout this section, $E$ denotes a metric space, $d$ its distance and $T: E \longrightarrow E$ a map.

\begin{definicion} A map $T : E \longrightarrow E$  is called an {\rm $(m,q)$-isometry}  if, for all $x,y \in E$,
\begin{equation}\label{isometria ms}
\sum_{k=0}^m (-1)^{m-k} {m\choose k}  d(T^k x, T^{k}y)^q = 0 \; .
\end{equation}
\end{definicion}

It is clear that $(1,q)$-isometries are isometries.

\begin{proposition}\label{en em} Let $T : E \longrightarrow E$ be a map. Fixed $m \geq 1$ and $q>0$, the following assertions are equivalent:
\begin{enumerate}
\item $T$ is an $(m,q)$-isometry.
\item For all $x,y \in E$, the sequence $(d(T^n x, T^{n}y)^q)_{n \geq 0}$ is an arithmetic progression of order $m-1$.
\end{enumerate}
\end{proposition}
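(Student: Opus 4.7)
The plan is to unpack both conditions directly and show they are literally the same statement once one substitutes the right pair of points. Fix $x, y \in E$ and set $a_n := d(T^n x, T^n y)^q$ for $n \geq 0$. Then condition (1) applied to any pair $(x',y')$ says precisely
$$
\sum_{k=0}^m (-1)^{m-k} \binom{m}{k} d(T^k x', T^k y')^q = 0,
$$
while, by the characterization of arithmetic progressions of order $m-1$ (the defining recurrence $D^m a = 0$ from Section~3, equivalently equation (\ref{diff op}) with $h = m$), condition (2) says
$$
\sum_{k=0}^m (-1)^{m-k} \binom{m}{k} a_{n+k} = 0 \quad \text{for all } n \geq 0.
$$

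For the implication (1)$\Rightarrow$(2), I would fix $x,y \in E$ and $n \geq 0$, and apply the $(m,q)$-isometry hypothesis to the pair $(T^n x, T^n y)$. Since $T^k(T^n x) = T^{n+k} x$ and similarly for $y$, this yields exactly $\sum_{k=0}^m (-1)^{m-k}\binom{m}{k} a_{n+k} = 0$, which is $D^m a_n = 0$ for every $n$; hence $a = (a_n)_{n \geq 0}$ is an arithmetic progression of order $m-1$.

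For the reverse implication (2)$\Rightarrow$(1), I would take an arbitrary pair $x,y \in E$, form the sequence $a_n = d(T^n x, T^n y)^q$, and simply evaluate the arithmetic-progression identity at $n = 0$. This recovers the defining equality (\ref{isometria ms}) for $(m,q)$-isometries at the pair $(x,y)$, and since $(x,y)$ was arbitrary, $T$ is an $(m,q)$-isometry.

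There is essentially no obstacle: the proposition is a reformulation, and the only substantive ingredient is the stability of the sequence under the shift $n \mapsto n+1$ induced by composing with $T$. I would make sure to state explicitly the correspondence ``$(m,q)$-isometry condition $\Longleftrightarrow$ order $m-1$'' (noting the index shift between ``order $h$'' and ``$m$ in the definition'', namely $h+1 = m$) so that no off-by-one confusion remains for the reader.
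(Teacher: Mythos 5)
Your proposal is correct and follows essentially the same route as the paper: the forward direction applies the defining identity to the pair $(T^n x, T^n y)$ to obtain $D^m a_n = 0$ for all $n$, and the converse is the case $n=0$. The paper's proof is the same argument (with $\ell$ in place of your $n$), merely stated more tersely.
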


\begin{proof} We have that $T$ is an $(m,q)$-isometry if and only if Equation (\ref{isometria ms}) holds for all $x,y \in E$. Fix $x,y \in E$. For every integer $\ell \geq 0$,
$$
\sum_{k=0}^m (-1)^{m-k} {m\choose k}  d(T^k T^\ell x, T^{k} T^\ell y)^q = 0 \; ,
$$
so
\begin{equation}\label{*}
\sum_{k=0}^m (-1)^{m-k} {m\choose k}  d(T^{k+\ell} x, T^{k + \ell} y)^q = 0 \; .
\end{equation}
Therefore $(d(T^n x , T^n y)^q)_{n \geq 0 }$ is an arithmetic progression of order $m-1$.
\end{proof}

From Propositions \ref{monotony} and \ref{en em} we obtain that if $T$ is an $(m,q)$-isometry, then the sequence $(d(T^n x,T^n y)^q)_{n \geq 0}$ is eventually increasing.

The $(m,q)$-isometries are $(m+\ell,q)$-isometries, for every $\ell \geq 0$, since the arithmetic progressions of order $h$ are also arithmetic progressions of order $h+\ell$, for all $\ell \geq 0$. For this it is natural the following definition:

\begin{definicion} A map $T : E \longrightarrow E$  is called a {\rm strict $(m,q)$-isometry}  if it  is an $(m,q)$-isometry with $m=1$ or if it is an $(m,q)$-isometry, but is not an $(m-1,q)$-isometry, for $m>1$.
\end{definicion}

Now we give an analogous to Proposition \ref{en em} for strict $(m,q)$-isometries. We omit the proof since is clear.

\begin{proposition}\label{en em s} Let $T : E \longrightarrow E$ be a map. Fixed $m \geq 1$ and $q>0$, the following assertions are equivalent:
\begin{enumerate}
\item $T$ is a strict $(m,q)$-isometry.
\item For all $x,y \in E$, the sequence $(d(T^n x, T^{n}y)^q)_{n \geq 0}$ is an arithmetic progression of order $m-1$ and it is an arithmetic progression of strict order $m-1$ for some $x,y \in E$.
\end{enumerate}
\end{proposition}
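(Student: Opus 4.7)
My plan is to use Proposition \ref{en em} as the backbone of the proof, treating the strictness clause as an orthogonal decoration that is handled by a direct unfolding of the definition.

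First I would dispose of the case $m=1$ quickly: here a strict $(1,q)$-isometry is just a $(1,q)$-isometry, and by Proposition \ref{en em} this is equivalent to each sequence $(d(T^n x, T^n y)^q)_{n\geq 0}$ being an arithmetic progression of order $0$. Since order $0$ coincides with strict order $0$ by definition, condition (2) requires both the universal clause (each sequence is of order $0$, i.e. constant) and the existential clause (some sequence is of strict order $0$, which is automatic once it is constant and $E$ is nonempty). So the case $m=1$ is immediate.

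For $m>1$, I would argue both implications by contrapositive on the strictness part, keeping Proposition \ref{en em} to convert between the $(m,q)$-isometry condition and the order condition on the sequences. For $(1) \Longrightarrow (2)$: assume $T$ is a strict $(m,q)$-isometry. Proposition \ref{en em} gives the universal clause immediately. Since $T$ is not an $(m-1,q)$-isometry, Proposition \ref{en em} (applied with $m$ replaced by $m-1$) yields some $x,y\in E$ for which $(d(T^n x, T^n y)^q)_{n\geq 0}$ fails to be an arithmetic progression of order $m-2$. Combined with being of order $m-1$, this is precisely the definition of strict order $m-1$, producing the existential clause.

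For $(2) \Longrightarrow (1)$: the universal clause plus Proposition \ref{en em} yields that $T$ is an $(m,q)$-isometry. The existential clause provides $x,y\in E$ such that the associated sequence is of strict order $m-1$, hence is not an arithmetic progression of order $m-2$; applying Proposition \ref{en em} in the other direction rules out that $T$ is an $(m-1,q)$-isometry, which is exactly what the definition of strict $(m,q)$-isometry requires.

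The only step that needs any care is confirming that the negation of ``$T$ is an $(m-1,q)$-isometry'' matches the existence of a pair $x,y$ producing a sequence not of order $m-2$; this is not really an obstacle, just a direct application of Proposition \ref{en em}. Nothing here requires the combinatorial machinery of Sections 2 and 3 beyond what is already packaged in Proposition \ref{en em}, which is why the authors describe the proof as clear.
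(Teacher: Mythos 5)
Your proof is correct, and it is exactly the routine unfolding of the definitions via Proposition \ref{en em} that the authors have in mind when they write ``we omit the proof since is clear''; the separate treatment of $m=1$ and the contrapositive handling of the strictness clause are precisely the points that make the statement hold. Nothing further is needed.
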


By Proposition \ref{prop c d}, if $T$ is an $(m,q)$-isometry and an $(n,r)$-isometry, then
$$
(m-1)r = (n-1) q \; .
$$

Applying Proposition \ref{prop 3 options} we obtain the following result.

\begin{proposition}\label{***} Let $T : E \longrightarrow E$ be a map. Then $T$ satisfies exactly one of the following assertions:
\begin{enumerate}
\item $T$ is not an $(m,q)$-isometry for all $m \geq 1$ and $q>0$.
\item $T$ is an isometry.
\item There are unique $m \geq 2$ and $q >0$ such that $T$ is a strict $((m-1)k+1,qk)$-isometry, for every $k=1,2,3...$. Moreover, if $T$ is an strict $(n,r)$-isometry, then $n = (m-1)k+1$ and $r=qk$ for some $k=1,2,3...$
\end{enumerate}
\end{proposition}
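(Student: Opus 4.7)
The plan is to invoke Proposition~\ref{prop 3 options} on the distance sequences $a_{x,y}:=(d(T^nx,T^ny))_{n\geq 0}$ for each pair $(x,y)\in E\times E$ and patch the resulting per-pair data uniformly. By Proposition~\ref{en em}, $T$ is an $(m,q)$-isometry iff $a_{x,y}^q$ is an arithmetic progression of order $m-1$ for every $x,y$; note that if such an $(m,q)$ exists and $d(x,y)>0$, then $a_{x,y}$ is in fact a positive sequence, since a polynomial eventually vanishing must vanish identically.

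Mutual exclusivity is immediate: (1) trivially excludes (2) and (3); and (2) excludes (3) because an isometry produces only constant sequences $a_{x,y}$, which cannot be arithmetic progressions of strict order $\geq 1$ after any positive power, while (3) requires $T$ to be a strict $(m,q)$-isometry with $m\geq 2$.

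For the existence part, assume (1) and (2) fail. Fix $q_0>0$ with $T$ a strict $(m_0,q_0)$-isometry for the minimal such $m_0$; failure of (2) forces $m_0\geq 2$ and the existence of at least one non-constant $a_{x,y}$. For every non-constant pair, Proposition~\ref{prop 3 options}(3) yields a unique proper order $(s_{x,y},\ell_{x,y})$ with $s_{x,y}>0$ and $\ell_{x,y}\geq 1$; the fact that $a_{x,y}^{q_0}$ is an arithmetic progression forces $k_{x,y}:=q_0/s_{x,y}\in\mathbb{Z}_{\geq 1}$, with $a_{x,y}^{q_0}$ of strict order $M_{x,y}:=k_{x,y}\ell_{x,y}$; strictness of $(m_0,q_0)$ gives $\max_{x,y}M_{x,y}=m_0-1$, attained at some $(x^*,y^*)$. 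Letting $d:=\gcd\{k_{x,y}\}\in\mathbb{Z}_{\geq 1}$, we see $d\mid k_{x,y}$ for all pairs, and in particular $d\mid M_{x^*,y^*}=m_0-1$; set
\[
q:=q_0/d,\qquad m:=(m_0-1)/d+1\geq 2.
\]
Then for every $k\geq 1$ and every non-constant pair, $a_{x,y}^{qk}$ corresponds to parameter $kk_{x,y}/d\in\mathbb{Z}_{\geq 1}$ in the proper-order expansion, so it is an arithmetic progression of strict order $kM_{x,y}/d\leq k(m-1)$; hence $T$ is an $((m-1)k+1,qk)$-isometry, with strictness witnessed by $(x^*,y^*)$, where the order equals $k(m-1)$ exactly.

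For the ``moreover'' clause, let $T$ be a strict $(n,r)$-isometry. Then $s_{x,y}\mid r$ for every non-constant pair; writing $r=q_0u/v$ in lowest terms forces $v\mid k_{x,y}$ for all such pairs, hence $v\mid d$, and thus $r=qk$ for some positive integer $k$. The strict order of $a_{x,y}^r$ is $r\ell_{x,y}/s_{x,y}=kM_{x,y}/d$, whose maximum over pairs equals $k(m-1)$, giving $n-1=k(m-1)$. Uniqueness of $(m,q)$ then follows by applying this clause in both directions: if $(m',q')$ also satisfies (3), then $q'=qk_1$ and $q=q'k_2$ force $k_1=k_2=1$, so $(m',q')=(m,q)$. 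The main obstacle is precisely this uniform extraction of the base exponent $q$; the crux is that the set of exponents $r>0$ for which $a_{x,y}^r$ is an arithmetic progression for every non-constant pair equals $q\mathbb{Z}_{\geq 1}$, because every $s_{x,y}=q_0/k_{x,y}$ divides $q_0$ and the least common multiple of this family equals $q_0/\gcd\{k_{x,y}\}=q$.
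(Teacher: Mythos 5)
Your proposal is correct, and it follows the same basic route as the paper: reduce via Proposition~\ref{en em} to the trichotomy of Proposition~\ref{prop 3 options} applied to the distance sequences $a_{x,y}$. The difference is one of completeness rather than of method, and it is worth recording. The paper's own proof only verifies that a strict $(m,q)$-isometry is a strict $((m-1)k+1,qk)$-isometry for each $k\geq 1$; it does not address where the base pair $(m,q)$ comes from, why it is unique, or why every strict $(n,r)$-isometry relation must be of the form $n=(m-1)k+1$, $r=qk$. The genuine difficulty there is that Proposition~\ref{prop 3 options} is a statement about a single positive sequence, so each non-constant pair $(x,y)$ only yields its own proper order $(s_{x,y},\ell_{x,y})$, and these must be reconciled into one pair $(m,q)$ valid for the whole map. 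You supply exactly this missing uniformization: the observation that each $k_{x,y}=q_0/s_{x,y}$ is a positive integer, the passage to $d=\gcd\{k_{x,y}\}$, the identification of the admissible exponents as $q\,\mathbb{Z}_{\geq 1}$ with $q=q_0/d$, and the bookkeeping of strict orders via the witnessing pair $(x^*,y^*)$. Two small points you handle correctly that deserve emphasis: the verification that $a_{x,y}$ is genuinely positive when $d(x,y)>0$ (a polynomial vanishing at one index vanishes from that index on, hence identically), which is needed before the positivity-based propositions can be invoked at all; and the restriction of the $\gcd$ to non-constant pairs, since constant pairs impose no constraint. In short, your argument is a correct and essentially self-contained proof of the full statement, whereas the paper proves only one of its assertions and leaves the rest to the reader.
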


\begin{proof} Note that if $T$ is an strict $(m,q)$-isometry, then $(d(T^k x, T^k y)^q)_{k \geq 0}$ is an arithmetic progression of order $m-1$, for all $(x,y) \in E \times E$, and $(d(T^k u, T^k v)^q)_{k \geq 0}$ is an arithmetic progression of strict order $m-1$, for some  $(u,v) \in E \times E$. Hence, for all $k \geq 1$,  $(d(T^k x, T^k y)^q)_{k \geq 0}$ is an arithmetic progression of order $k(m-1)$, for all $(x,y) \in E \times E$, and $(d(T^k u, T^k v)^q)_{k \geq 0}$ is an arithmetic progression of strict order $k(m-1)$, for some  $(u,v) \in E \times E$. Consequently $T$ is a strict $(k(m-1)+1,kq)$-isometry.
\end{proof}

The above proposition and Definition \ref{a p proper}   motivate the following definition.

\begin{definicion} An $(m,q)$-isometry $T : E \longrightarrow E$  is said to be a {\rm proper $(m,q)$-isometry}  if $m \leq n$ and $q \leq r$ whenever $T$ is a $(n,r)$-isometry.
\end{definicion}

Note that any proper $(m,q)$-isometry is a strict $(m,q)$-isometry. By \cite[Corollary 4.6]{hms}, also it is a strict $(k(m-1)+1,kq)$-isometry, for all positive integer $k$.

\;

\subsection{Distance associated to an $(m,q)$-isometry  on a metric space}

\ \par

\;

The $(m,q)$-isometries become isometries for an adequate distance. The following results are analogous to \cite{bayart}.

\begin{proposicion} \cite[Proposition 5.1]{bmmu} Let $T$ be an $(m,q)$-isometry. For $x,y \in E$ define
$$
\rho_T(x,y) := \left( \sum_{k=0}^{m-1} (-1)^{m-1-k} {m-1 \choose k}  d(T^k x, T^{k}y)^q \right)^{1/q} \; .
$$
Then $\rho_T$ is a semi-distance and moreover,
\begin{equation}\label{rho}
\rho_T (x,y)^q = (m-1)! \lim_{n \rightarrow \infty} \frac{d(T^nx , T^ny)^q}{n^{m-1}} \; .
\end{equation}
\end{proposicion}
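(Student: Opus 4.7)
The strategy is to identify $\rho_T(x,y)^q$ with the top-order coefficient (times $(m-1)!$) of the polynomial representing $a_n := d(T^nx,T^ny)^q$; once that identification is in hand, both the limit formula and the semi-distance axioms follow quickly.

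First I would fix $x,y \in E$ and set $a_n := d(T^nx,T^ny)^q$. By Proposition \ref{en em}, the sequence $(a_n)_{n\ge 0}$ is an arithmetic progression of order $m-1$, so by Theorem \ref{a p general term}(5) there exist scalars $\gamma_0,\dots,\gamma_{m-1}$ with
$$a_n = \gamma_{m-1}n^{m-1} + \gamma_{m-2}n^{m-2} + \cdots + \gamma_1 n + \gamma_0, \qquad \gamma_{m-1} = \tfrac{1}{(m-1)!}\,D^{m-1}a_0.$$
Using (\ref{diff op}) with $h=m-1$ and $n=0$, the defining sum in $\rho_T$ is precisely $D^{m-1}a_0$, so
$$\rho_T(x,y)^q \;=\; D^{m-1}a_0 \;=\; (m-1)!\,\gamma_{m-1}.$$
Since $a_n$ is a polynomial in $n$ of degree at most $m-1$, it is clear that $a_n/n^{m-1}\to \gamma_{m-1}$ as $n\to\infty$, and combining this with the previous equality gives (\ref{rho}). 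As an immediate byproduct, $a_n\ge 0$ for all $n$ forces $\gamma_{m-1}\ge 0$, hence $\rho_T(x,y)^q\ge 0$ and $\rho_T(x,y)$ is a well-defined non-negative real number.

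It remains to verify the semi-distance axioms. The property $\rho_T(x,x)=0$ is immediate since $a_n\equiv 0$, and symmetry $\rho_T(x,y)=\rho_T(y,x)$ follows from the symmetry of $d$. The main obstacle is the triangle inequality, and I would prove it not from the algebraic definition but from the limit formula (\ref{rho}), which holds uniformly in $x,y$. Taking $q$-th roots, the identity (\ref{rho}) may be rewritten as
$$\rho_T(x,y) \;=\; ((m-1)!)^{1/q}\,\lim_{n\to\infty}\frac{d(T^nx,T^ny)}{n^{(m-1)/q}}.$$
For arbitrary $x,y,z\in E$, the ordinary triangle inequality gives $d(T^nx,T^nz)\le d(T^nx,T^ny)+d(T^ny,T^nz)$; dividing by $n^{(m-1)/q}$, passing to the limit (all three limits exist by the preceding paragraph), and multiplying by $((m-1)!)^{1/q}$ yields $\rho_T(x,z)\le \rho_T(x,y)+\rho_T(y,z)$. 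This device elegantly sidesteps the need to separate the cases $q\ge 1$ and $0<q<1$, which would otherwise be the delicate point because the map $t\mapsto t^q$ is not generally subadditive.
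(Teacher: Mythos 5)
Your proposal is correct and follows essentially the same route as the paper's own proof: both identify $\rho_T(x,y)^q$ with $(m-1)!$ times the leading coefficient of the polynomial representing $d(T^nx,T^ny)^q$ via $D^{m-1}a_0$, and both obtain the triangle inequality by taking $q$-th roots of the limit formula and passing the ordinary triangle inequality for $d$ to the limit. No substantive differences.
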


\begin{proof} As $(d(T^nx , T^ny)_{n \geq 0})$ is an arithmetic progression of order $m-1$, by Theorem \ref{a p strict}  we can write
\begin{eqnarray}
d(T^nx,T^ny)^q &=& \ga_{m-1} n^{m-1} + \ga_{m-2} n^{m-2} + \cdots + \ga_0 \nonumber \\
&=& \sum_{k=0}^{m-1} {n \choose k}  D^k ( d(T^0 x, T^{0} y)^q) \; ,  \nonumber
\end{eqnarray}
where
$$
\ga_{m-1} = \frac{1}{(m-1)!}  D^{m-1}  ( d(T^0 x, T^{0} y)^q) = \frac{1}{(m-1)!} \left( \sum_{k=0}^{m-1} (-1)^{m-1-k} {m-1 \choose k}  d(T^k x, T^{k}y)^q \right) \; .
$$
Therefore
\begin{eqnarray}
\rho_T(x,y)^q &=& (m-1)! \; \ga_{m-1} \nonumber \\
&=& (m-1)! \lim_{n \rightarrow \infty} \frac{d(T^nx , T^ny)^q}{n^{m-1}} \; . \nonumber
\end{eqnarray}

We will show that $\rho_T$ is a semi-metric. By (\ref{rho}) it is clear that $\rho_T \geq 0$, $\rho_T(x,x)=0$ and $\rho_T (x,y) = \rho_T (y,x)$ for all $x,y\in E$. It remains to show the triangular inequality. Let $x,y,z \in E$. Then
\begin{eqnarray}
\rho_T (x,y) &=& [(m-1)!]^{1/q} \lim_{n \rightarrow \infty} \frac{d(T^nx , T^ny)}{n^{\frac{m-1}{q}}} \nonumber \\
&\leq& [(m-1)!]^{1/q} \lim_{n \rightarrow \infty} \frac{d(T^nx , T^nz)}{n^{\frac{m-1}{q}}}  + [(m-1)!]^{1/q} \lim_{n \rightarrow \infty} \frac{d(T^nz , T^ny)}{n^{\frac{m-1}{q}}} \nonumber \\
&=& \rho_T(x,z) + \rho_T(z,y) \; . \nonumber
\end{eqnarray}
So the proof is finished.
\end{proof}

By (\ref{rho}), if  $T$  is  an $(m,q)$-isometry, then $\rho_T(x,y)= \rho_T(Tx,Ty)$; that is, $T: (E,\rho_T) \longrightarrow (E,\rho_T)$ is an isometry.

\;

\subsection{Powers and products of $(m,q)$-isometries on metric spaces }

\ \par

\;

Now we apply the results of subsections 2.5 and 3.2 to $(m,q)$-isometries.

\begin{teorema} \cite[Theorems 3.15 and 3.16]{bmmu}, \cite[Theorems 3.1 and 3.6]{bdm}. Let  $T : E \longrightarrow E$ be a map.
\begin{enumerate}
\item If $T$   is a strict $(m,q)$-isometry, then any power $T^k$  is also a strict $(m,q)$-isometry.

\item If $T^c$ is a strict $(m,q)$-isometry and $T^d$ is a strict $(\ell,q)$-isometry,  then $T^e$ is an $(h,q)$-isometry, where $e$ is the greatest common divisor of $c$ and $d$, and $h$ is
the minimum of $m$ and $\ell$.

\end{enumerate}
\end{teorema}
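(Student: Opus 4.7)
The plan is to translate both assertions into statements about the sequences $a_n := d(T^n x, T^n y)^q$ via Proposition \ref{en em} and Proposition \ref{en em s}, and then apply the results from Section 4 about subsequences of arithmetic progressions.

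For part (1), I would fix $x,y \in E$ and set $a_n := d(T^n x, T^n y)^q$. By Proposition \ref{en em s}, the hypothesis that $T$ is a strict $(m,q)$-isometry means that $a$ is an arithmetic progression of order $m-1$ for every pair $x,y$, and of strict order $m-1$ for at least one pair $x_0, y_0$. The sequence associated to the power $T^k$ at the same pair $x,y$ is precisely $(d((T^k)^n x, (T^k)^n y)^q)_{n\geq 0} = (a_{kn})_{n \geq 0}$, i.e.\ the $k$-step subsequence of $a$. By Corollary \ref{cor}, if $a$ is an arithmetic progression of strict order $h$, then $(a_{kn})_{n \geq 0}$ is an arithmetic progression of the same strict order $h$. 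Applying this with $h \leq m-1$ at every pair, and with $h = m-1$ at the distinguished pair $x_0, y_0$, one concludes via Proposition \ref{en em s} that $T^k$ is a strict $(m,q)$-isometry.

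For part (2), I would again fix $x,y \in E$ and let $a_n := d(T^n x, T^n y)^q$. The hypothesis that $T^c$ is an $(m,q)$-isometry translates, by Proposition \ref{en em}, into the statement that $(a_{cn})_{n\geq 0}$ is an arithmetic progression of order $m-1$, hence of strict order $h_1 \leq m-1$. Likewise, $(a_{dn})_{n\geq 0}$ is an arithmetic progression of strict order $k_1 \leq \ell -1$. Now apply Theorem \ref{sub a p}: if $e = \gcd(c,d)$, then $(a_{en})_{n\geq 0}$ is an arithmetic progression of strict order $\min\{h_1, k_1\} \leq \min\{m-1,\ell-1\} = h-1$, where $h = \min\{m,\ell\}$. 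Since this holds for every pair $x,y \in E$, Proposition \ref{en em} gives that $T^e$ is an $(h,q)$-isometry.

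The main subtlety, which I would address explicitly in the write-up, is that Theorem \ref{sub a p} is phrased in terms of \emph{strict} orders, while the final conclusion in (2) is merely about the order $h$ (not strict). The bookkeeping is straightforward: at each pair $(x,y)$ the strict orders of the $c$-step and $d$-step subsequences may vary and be smaller than $m-1$ and $\ell-1$, but the minimum is always bounded above by $h-1$, which is all that is needed. No delicate estimate is required; the proof is essentially a pointwise transcription of the abstract subsequence results from Section 4 into the language of $(m,q)$-isometries via the characterization in Proposition \ref{en em}.
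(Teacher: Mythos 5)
Your proof is correct and follows essentially the same route as the paper: both parts reduce to Corollary \ref{cor} and Theorem \ref{sub a p} via the characterization of $(m,q)$-isometries as arithmetic progressions of the distance sequences. The only difference is cosmetic — the paper packages the whole family into a single sequence $(T^n \times_q T^n)_{n \geq 0}$ in the group $\mathbb{R}^{E \times E}$, whereas you argue pointwise at each pair $(x,y)$ with explicit bookkeeping of the possibly varying strict orders, which is if anything slightly more careful since Theorem \ref{sub a p} is stated for numerical sequences.
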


\begin{proof} It is useful the following notation: we define the map $T \times_q T: E \times E \longrightarrow \mathbb{R}$ by $(T \times_q T)(x,y) := d(Tx,Ty)^q$.

(1) Assume that $T$   is a strict $(m,q)$-isometry. Note that this is equivalent to that the sequence $(T^n \times_q T^n)_{n \geq 0}$ is an arithmetic progression of strict order $m-1$ in $\mathbb{R}^{E \times E}$. Given an integer $k \geq 1$, $(T^{kn} \times_q T^{kn})_{n \geq 0}$ is also an arithmetic progression of strict order $m-1$ by Corollary \ref{cor}. Hence $T^k$ is a strict $(m,q)$-isometry.

(2) As $T^c$ is a strict $(m,q)$-isometry we have that $(T^{ck} \times_q T^{ck})_{k \geq 0}$ is an arithmetic progression of strict order $m-1$. Analogously, from $T^d$ a strict $(\ell,q)$-isometry we obtain that $(T^{dk} \times_q T^{dk})_{k \geq 0}$ is an arithmetic progression of strict order $\ell-1$. Applying Theorem \ref{sub a p} it results that $(T^{ek} \times_q T^{ek})_{k \geq 0}$ is an arithmetic progression of strict order $h-1$, so $T^e$ is an $(h,q)$-isometry, being $e = gcd (m,\ell)$ and $h$ the mimimun of $h$ and $k$.
\end{proof}

\begin{corollary}\label{coro} (\cite[Corollary 3.17]{bmmu}, \cite[Corollary 3.7]{bdm}).Let $E$ be a metric space and $T : E \longrightarrow E$ be a map. For positive integers $h,n,m$  and real number $q \geq 0$,
\begin{enumerate}
\item If $T$ is an $(m,q)$-isometry and $T^h$ is an isometry, then $T$ is an isometry.
\item If $T^h$ and $T^{h+1}$ are $(m,q)$-isometries, then $T$ is an $(m,q)$-isometry.
\item If $T^h$ is an $(m,q)$-isometry  and $T^{h+1}$ is an  $(n,q)$-isometry with $m<n$, then $T$ is an $(m,q)$-isometry.
\end{enumerate}
\end{corollary}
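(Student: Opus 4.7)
The plan is to reduce all three parts to part (2) of the preceding theorem, which asserts that if $T^c$ is a strict $(m,q)$-isometry and $T^d$ is a strict $(\ell,q)$-isometry, then $T^{\gcd(c,d)}$ is a $(\min(m,\ell),q)$-isometry. The key observation is that in each of the three situations, the two exponents appearing in the hypothesis are coprime: namely $1$ and $h$ in part (1), and $h$ and $h+1$ in parts (2) and (3). Hence the greatest common divisor is always $1$, and the theorem will yield a conclusion about $T^1 = T$ itself.

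Before invoking the theorem I would dispose of the strict/non-strict bookkeeping, since the corollary is stated for plain $(m,q)$-isometries while the theorem is stated in the strict form. Every $(r,q)$-isometry is a strict $(r',q)$-isometry for a unique $r' \leq r$, and conversely a strict $(r',q)$-isometry with $r' \leq r$ is automatically an $(r,q)$-isometry. This lets me read part (2) of the preceding theorem in the weaker but more convenient form: if $T^c$ is an $(m,q)$-isometry and $T^d$ is an $(\ell,q)$-isometry, then $T^{\gcd(c,d)}$ is a $(\min(m,\ell),q)$-isometry. Alternatively, at the level of distance sequences, for each fixed $x,y \in E$ the scalar sequence $a_k := d(T^k x, T^k y)^q$ has subsequences $(a_{ck})_{k \geq 0}$ and $(a_{dk})_{k \geq 0}$ of strict orders at most $m-1$ and $\ell-1$, so Theorem \ref{sub a p} forces $(a_{\gcd(c,d)\cdot k})_{k \geq 0}$ to be an arithmetic progression of strict order at most $\min(m,\ell)-1$, which is precisely what is needed.

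With this reformulation each part is a one-line specialization. For (1) I would take $(c,d,m,\ell) = (1,h,m,1)$: since $T = T^1$ is an $(m,q)$-isometry and $T^h$ is an isometry, hence a $(1,q)$-isometry, I conclude that $T$ is a $(\min(m,1),q) = (1,q)$-isometry, i.e.\ an isometry. For (2) I would take $(c,d,m,\ell) = (h,h+1,m,m)$: both $T^h$ and $T^{h+1}$ are $(m,q)$-isometries and $\gcd(h,h+1) = 1$, so $T$ is a $(\min(m,m),q) = (m,q)$-isometry. For (3) I would take $(c,d,m,\ell) = (h,h+1,m,n)$ with $m < n$: then $\min(m,n) = m$, and $T$ is an $(m,q)$-isometry.

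I do not anticipate any serious obstacle; once the strict/non-strict translation is in place, each part follows immediately with no further combinatorial or analytic work. The only substantive content is the number-theoretic remark that $\gcd(1,h) = 1$ and $\gcd(h,h+1) = 1$ for every positive integer $h$, which is precisely what allows information about consecutive powers of $T$ to be promoted to information about $T$ itself.
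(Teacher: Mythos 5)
Your proposal is correct and takes exactly the route the paper intends: the corollary is stated immediately after the powers theorem with no separate proof, so the whole content is the specialization of its part (2) to the coprime exponent pairs $(1,h)$ and $(h,h+1)$, which is what you do. Your explicit handling of the strict/non-strict translation (every $(r,q)$-isometry is a strict $(r',q)$-isometry for some $r'\leq r$, and conversely) is a worthwhile addition, since the theorem is phrased in the strict form while the corollary is not.
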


\begin{teorema}\label{mainr} (\cite[Theorem 3.14]{bmmu}, \cite[Theorem 3.3]{bdm}).
Let $E$ be a metric space,   $S,T: E \longrightarrow E$ maps such that commute, $q$ positive real number and $n,m$ positive integers. If $T$ is an $(n,q)$-isometry and $S$ is
an $(m,q)$-isometry, then $ST$ is an $(m+n-1,q)$-isometry.
\end{teorema}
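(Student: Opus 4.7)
The plan is to leverage Proposition \ref{en em} to reformulate the statement in the language of arithmetic progressions, and then apply Theorem \ref{product ap} about the diagonal of a double sequence whose rows and columns are arithmetic progressions.

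First I would fix arbitrary $x,y \in E$ and introduce the double sequence
$$
a_{i,j} := d(S^i T^j x, S^i T^j y)^q \quad (i,j \geq 0).
$$
Because $S$ and $T$ commute, $(ST)^k = S^k T^k$, so the diagonal $(a_{i,i})_{i \geq 0}$ coincides with $(d((ST)^i x, (ST)^i y)^q)_{i \geq 0}$. Thus by Proposition \ref{en em}, showing that $ST$ is an $(m+n-1,q)$-isometry reduces to showing that this diagonal sequence is an arithmetic progression of order $m+n-2$.

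The next step is to verify the hypotheses of Theorem \ref{product ap}. For fixed $i$, write $u = S^i x$ and $v = S^i y$; then $a_{i,j} = d(T^j u, T^j v)^q$, which by Proposition \ref{en em} applied to the $(n,q)$-isometry $T$ is an arithmetic progression of order $n-1$ in $j$. Symmetrically, for fixed $j$, setting $u = T^j x$ and $v = T^j y$ gives $a_{i,j} = d(S^i u, S^i v)^q$, an arithmetic progression of order $m-1$ in $i$ by the hypothesis on $S$. This is exactly where commutativity of $S$ and $T$ is used: without it one could not freely extract a pure $T$-orbit (or $S$-orbit) from $a_{i,j}$.

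With rows and columns of $(a_{i,j})$ being arithmetic progressions of orders $n-1$ and $m-1$ respectively, Theorem \ref{product ap} yields that $(a_{i,i})_{i \geq 0}$ is an arithmetic progression of order $(n-1)+(m-1) = n+m-2$. Since $x,y \in E$ were arbitrary, a final appeal to Proposition \ref{en em} concludes that $ST$ is an $(m+n-1,q)$-isometry. The only nontrivial point in the argument is the passage through the double-sequence machinery; once the commutation $ST = TS$ is invoked to guarantee both row and column arithmetic-progression properties, Theorem \ref{product ap} delivers the result immediately.
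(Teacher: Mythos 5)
Your proposal is correct and follows essentially the same route as the paper: fix $x,y$, form the double sequence $a_{i,j}=d(S^iT^jx,S^iT^jy)^q$, check via Proposition \ref{en em} that rows and columns are arithmetic progressions of orders $n-1$ and $m-1$, and apply Theorem \ref{product ap} to the diagonal. The only cosmetic difference is that commutativity is really needed only to rewrite $a_{i,j}$ as a pure $T$-orbit for fixed $i$ (and to identify $(ST)^k$ with $S^kT^k$), not for the fixed-$j$ direction, but this does not affect the argument.
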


\begin{proof}
Fix $x,y \in E$. Consider the sequence $(a_{i,j})_{i,j \geq 0}$ defined by $a_{i,j} :=  d(S^iT^j x , S^iT^j y)^q$. As $S$ is an $(m,q)$-isometry, the sequence $(a_{i,j})_{i \geq 0}$ is an arithmetic progression of order $m-1$, for all $j \geq 0$. Taking into account that $TS=ST$ and that $T$ is $(n,q)$-isometry, the sequence $(a_{i,j})_{j \geq 0}$ is an arithmetic progression of order $n-1$, for all $i \geq 0$. Applying Theorem \ref{product ap} we have that $(a_{i,i})_{i \geq 0}$ is an arithmetic progression of order $m+n-2$. Therefore $ST$ is an $(m+n-1,q)$-isometry. \end{proof}

\;

\subsection{$(m,q)$-isometries on Banach spaces}

\ \par

\;

Throughout this subsection, $X$ denotes a Banach space and $\| \cdot \|$ its norm, and $T: X \longrightarrow X$ a linear map.

The notion of $(m,q)$-isometry can be adapted to the setting of Banach space in the following way:

\begin{proposition} A linear map $T : X \longrightarrow X$  is an $(m,q)$-isometry  if and only if, for all $x \in X$,
\begin{equation}\label{isometria bs}
\sum_{k=0}^m (-1)^{m-k} {m\choose k}  \| T^k x \|^q = 0 \; .
\end{equation}
\end{proposition}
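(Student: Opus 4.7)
The plan is to exploit linearity of $T$ together with the metric-space definition of $(m,q)$-isometry, which for a Banach space reads, for all $x,y \in X$:
$$
\sum_{k=0}^m (-1)^{m-k} \binom{m}{k} \|T^k x - T^k y\|^q = 0.
$$
Since $T$ is linear, $T^k x - T^k y = T^k(x-y)$, so the metric condition can be rewritten entirely in terms of the single vector $z = x - y$. The substitution $z = x-y$ establishes a bijection between the conditions parametrized by pairs $(x,y) \in X \times X$ and those parametrized by $z \in X$, which is exactly what the equivalence asserts.

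For the forward direction, I would assume $T$ is an $(m,q)$-isometry in the metric-space sense (applied with $d(u,v) = \|u-v\|$) and specialize to $y=0$. Because $T$ is linear, $T^k 0 = 0$, and the metric-space identity reduces immediately to (\ref{isometria bs}) for an arbitrary $x \in X$.

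For the backward direction, I would assume (\ref{isometria bs}) holds for every $x \in X$ and then, for arbitrary $x,y \in X$, apply it to the vector $x-y$. Linearity gives $T^k(x-y) = T^k x - T^k y$, and so
$$
\sum_{k=0}^m (-1)^{m-k} \binom{m}{k} \|T^k x - T^k y\|^q = \sum_{k=0}^m (-1)^{m-k} \binom{m}{k} \|T^k(x-y)\|^q = 0,
$$
which is the metric-space definition of $(m,q)$-isometry under the norm-induced distance.

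There is no real obstacle here; the only ingredients are linearity of each $T^k$ (in particular $T^k 0 = 0$) and the translation invariance of the norm metric $d(u,v) = \|u-v\|$. The proposition is essentially a rephrasing of the definition, and the short argument above makes the equivalence precise.
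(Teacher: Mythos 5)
Your proposal is correct and follows the same route as the paper, whose entire proof is the observation that $d(x,y)=\|x-y\|$ and $\|T^kx-T^ky\|=\|T^k(x-y)\|$; you have simply spelled out both directions (specializing to $y=0$ one way, substituting $z=x-y$ the other) of that same identification.
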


\begin{proof} It is enough note that $d(x,y) = \| x-y \|$ and $\| T^kx - T^ky\| =\| T^k(x-y)\|$.
\end{proof}

\begin{proposition}\label{en eb} Let $T : X \longrightarrow X$ be a linear map. Fixed $m \geq 1$ and $q>0$, the following assertions are equivalent:
\begin{enumerate}
\item $T$ is an $(m,q)$-isometry.
\item For all $x \in X$, the sequence $(\| T^n x \|^q)_{n \geq 0}$ is an arithmetic progression of order $m-1$.
\end{enumerate}
\end{proposition}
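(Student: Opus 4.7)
The plan is to mirror almost verbatim the proof of Proposition \ref{en em}, exploiting the fact that the previous proposition has already reduced the Banach-space definition to the one-variable condition
$$
\sum_{k=0}^m (-1)^{m-k} \binom{m}{k} \|T^k x\|^q = 0 \qquad (x \in X).
$$
Once that equivalent form is in hand, the proof becomes an exercise in applying the difference-operator formula \eqref{diff op} from Section 2.

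First I would prove (1)$\Longrightarrow$(2). Fix $x \in X$ and an integer $\ell \geq 0$. Since $T$ is linear, $T^\ell x \in X$, so applying \eqref{isometria bs} with $T^\ell x$ in place of $x$ yields
$$
\sum_{k=0}^m (-1)^{m-k} \binom{m}{k} \|T^{k+\ell} x\|^q = 0.
$$
By \eqref{diff op}, this is exactly $D^m\bigl(\|T^n x\|^q\bigr)_{n \geq 0}$ evaluated at $n = \ell$. Since $\ell$ was arbitrary, $D^m$ annihilates the sequence $(\|T^n x\|^q)_{n \geq 0}$, which is the definition of being an arithmetic progression of order $m-1$.

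For the converse (2)$\Longrightarrow$(1), fix any $x \in X$. By hypothesis, $D^m(\|T^n x\|^q)_{n \geq 0} = 0$; in particular the value at $n=0$ gives
$$
\sum_{k=0}^m (-1)^{m-k} \binom{m}{k} \|T^k x\|^q = 0,
$$
which is \eqref{isometria bs}. Since $x$ was arbitrary, $T$ is an $(m,q)$-isometry.

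There is no real obstacle; the hardest conceptual point is simply recognizing that the previously-established one-variable reformulation \eqref{isometria bs} already encodes the shifted identity for every $\ell$ (via linearity and the fact that $T^\ell x$ ranges over all of $X$ as $x$ does), which is exactly what the difference-operator condition demands. No combinatorial lemma beyond \eqref{diff op} is needed, and the Banach-space structure enters only through linearity of $T$, which is what allows the passage from $\|T^k(T^\ell x)\|^q$ to $\|T^{k+\ell} x\|^q$.
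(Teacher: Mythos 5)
Your argument is correct and is exactly the route the paper intends: the paper leaves Proposition \ref{en eb} without proof because it is the verbatim Banach-space analogue of Proposition \ref{en em}, whose proof applies the defining identity to $T^\ell x$ to obtain the shifted sums and then invokes the difference-operator formula \eqref{diff op}. Your write-up of both implications matches that argument, so there is nothing to add.
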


Now we give an analogous to Proposition \ref{en em} for strict $(m,q)$-isometries. We omit the proof since is clear.

\begin{proposition}\label{en eb s} Let $T : X \longrightarrow X$ be a linear map. Fixed $m \geq 1$ and $q>0$, the following assertions are equivalent:
\begin{enumerate}
\item $T$ is a strict $(m,q)$-isometry.
\item For all $x \in X$, the sequence $(\| T^n x \|^q)_{n \geq 0}$ is an arithmetic progression of order $m-1$ and it is an arithmetic progression of strict order $m-1$ for some $x \in X$.
\end{enumerate}
\end{proposition}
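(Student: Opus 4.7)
The plan is to derive the equivalence directly from Proposition~\ref{en eb} (the non-strict Banach space version already stated) together with the definitions of ``strict $(m,q)$-isometry'' and ``arithmetic progression of strict order $m-1$''. The whole argument is a formal translation between a property failing on the operator side and the same property failing on the sequence side, which is why the authors omit it; I do not expect any real obstacle.

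First I would dispose of the degenerate case $m=1$. By definition a strict $(1,q)$-isometry is a $(1,q)$-isometry, which via Proposition~\ref{en eb} amounts to $(\|T^n x\|^q)_{n\geq 0}$ being constant (an arithmetic progression of order $0$) for every $x\in X$. Since a constant sequence is by convention of strict order $0$, assertions (1) and (2) coincide in this case.

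For $m\geq 2$ I would proceed as follows. Applying Proposition~\ref{en eb} with the given $m$, the hypothesis that $T$ is an $(m,q)$-isometry is equivalent to $(\|T^n x\|^q)_{n\geq 0}$ being an arithmetic progression of order $m-1$ for every $x\in X$; this already produces the first half of clause~(2). Now apply the same proposition with $m-1$ in place of $m$: the statement ``$T$ is \emph{not} an $(m-1,q)$-isometry'' is equivalent to the existence of some $x\in X$ such that $(\|T^n x\|^q)_{n\geq 0}$ is not an arithmetic progression of order $m-2$. Combined with the preceding fact that this sequence is of order $m-1$, this says exactly that $(\|T^n x\|^q)_{n\geq 0}$ is of strict order $m-1$ for that particular $x$.

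Since by definition a strict $(m,q)$-isometry ($m\geq 2$) is an $(m,q)$-isometry that is not an $(m-1,q)$-isometry, the conjunction of these two equivalences yields exactly assertion~(2). The only potentially delicate point is bookkeeping the existential ``for some $x\in X$'' against the universal ``for every $x\in X$'', but both come straight out of Proposition~\ref{en eb}, so no combinatorial identity or new estimate is required.
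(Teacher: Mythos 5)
Your proposal is correct, and it is precisely the routine unpacking that the paper has in mind when it writes ``We omit the proof since is clear'': the equivalence follows from Proposition~\ref{en eb} applied at levels $m$ and $m-1$, matching the definition of strict $(m,q)$-isometry against that of strict order $m-1$, with the $m=1$ case handled separately. The quantifier bookkeeping you flag is handled correctly, so nothing is missing.
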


\;

\subsection{$m$-isometries on Hilbert spaces}

\ \par

\;

Throughout this subsection, $H$ denotes a Hilbert space and $\langle \cdot \rangle$ its inner product, $T: H \longrightarrow H$ a (linear bounded) operator and  $T^*$ its adjoint.

As Hilbert spaces are considered Banach spaces, we can apply the results given in the previous section. However, the case $q=2$ can be expressed in a special way. By this, we give the following definition:

\begin{definition} An operator $T : H \longrightarrow H$  is an {\rm $m$-isometry}  if it is an $(m,2)$-isometry.
\end{definition}

\begin{proposition}\label{pa m-isom} Let $T \in L(H)$. The following statements are equivalent:
\begin{enumerate}

\item $T$ is an  $m$-isometry; that is, for every $x \in H$, the sequence $(\| T^{k} x \|^2)_{k \geq 0}$ is an arithmetic progression of order $m-1$ in $\mathbb{R}$; that is for all $\ell \geq 0$,
$$
\sum_{k=0}^m (-1)^{m-k} {m\choose k}  \| T^{k+\ell}  x \|^2 = 0 \; .
$$
Equivalently, for every $x \in H$,
\begin{equation}\label{ap hs 2}
\sum_{k=0}^m (-1)^{m-k} {m\choose k}  \| T^{k} x \|^2 = 0 \; .
\end{equation}

\item The following operator equality
\begin{equation}\label{ap hs 3}
\sum_{k=0}^m (-1)^{m-k} {m\choose k}  T^{*k} T^k  = 0 \;
\end{equation}
holds. Equivalently, the sequence $(T^{*k}T^k)_{k \geq 0}$ is an arithmetic progression of order $m-1$ in $L(H)$; that is, for all $\ell \geq 0$,
$$
\sum_{k=0}^m (-1)^{m-k} {m\choose k}  T^{*k+\ell} T^{k+\ell}  = 0 \; .
$$

\item For every $x \in H$, the sequence $(T^{*k}T^k x)_{k \geq 0}$ is an arithmetic progression of order $m-1$ in $H$; that is, for all $\ell \geq 0$,
$$
\sum_{k=0}^m (-1)^{m-k} {m\choose k}  T^{*k+ \ell} T^{k + \ell}  x = 0 \; .
$$
Equivalently, for every $x \in H$,
$$
\sum_{k=0}^m (-1)^{m-k} {m\choose k}  T^{*k} T^k  x = 0 \; .
$$

\item For every $x,y \in H$, the sequence $(\langle T^{*k}T^k x , y \rangle)_{k \geq 0}$ is an arithmetic progression of order $m-1$ in $\mathbb{C}$; that is, for all $\ell \geq 0$,
$$
\sum_{k=0}^m (-1)^{m-k} {m\choose k}  \langle T^{*k+ \ell} T^{k+\ell}  x,y \rangle = 0 \; .
$$
Equivalently, for every $x,y \in H$,
$$
\sum_{k=0}^m (-1)^{m-k} {m\choose k}  \langle T^{*k} T^k  x,y \rangle = 0 \; .
$$

\end{enumerate}
\end{proposition}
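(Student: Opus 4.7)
The plan is to reduce everything to a single operator identity
$$A_m := \sum_{k=0}^m (-1)^{m-k}\binom{m}{k} T^{*k}T^k = 0,$$
using the scalar identity $\|T^k x\|^2 = \langle T^{*k}T^k x, x\rangle$ to connect the norm formulation in (1) to the operator formulation in (2)--(4), together with the standard facts that for any $A \in L(H)$ one has $A = 0$ iff $Ax = 0$ for every $x \in H$ iff $\langle Ax, y\rangle = 0$ for every $x,y \in H$ iff $\langle Ax, x\rangle = 0$ for every $x \in H$, the last of these relying on polarization in the complex Hilbert space $H$.

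First I would dispatch the internal equivalence within each of the four statements, that is, the claim that the single equation (corresponding to $\ell = 0$) is equivalent to the full arithmetic-progression condition (the same equation shifted by arbitrary $\ell \geq 0$). For (2) this is purely algebraic: starting from $A_m = 0$, multiplication on the left by $T^{*\ell}$ and on the right by $T^\ell$ yields $\sum_{k=0}^m (-1)^{m-k}\binom{m}{k} T^{*(k+\ell)}T^{k+\ell} = 0$. For (1), (3) and (4), the single-equation version is already quantified over all $x$ (or $x, y$), so substituting $T^\ell x$ for $x$ produces the shifted version automatically.

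Next I would run the cycle (2) $\Rightarrow$ (3) $\Rightarrow$ (4) $\Rightarrow$ (1) $\Rightarrow$ (2). The first three implications are essentially bookkeeping: evaluate $A_m$ at an arbitrary $x$, then pair with an arbitrary $y$, then specialize to $y = x$ and read off $\langle A_m x, x\rangle = \sum_{k=0}^m (-1)^{m-k}\binom{m}{k}\|T^k x\|^2$. The closing implication (1) $\Rightarrow$ (2) is the only one with any content: it asserts that if $\langle A_m x, x\rangle = 0$ for every $x \in H$, then the operator $A_m$ itself vanishes.

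The main (and in fact only) obstacle is exactly this closing step, where one must pass from the quadratic form vanishing identically to the operator being zero. This is precisely where the hypothesis that $H$ is a complex Hilbert space enters, through the polarization identity. Alternatively, one can note that $A_m$ is self-adjoint, since each summand $T^{*k}T^k$ is, and for a self-adjoint operator $\langle A_m x, x\rangle \equiv 0$ immediately forces $A_m = 0$. Either route closes the cycle, at which point the chain of equivalences between (1), (2), (3) and (4), together with the internal equivalences already established, completes the proof.
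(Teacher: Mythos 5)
Your proposal is correct and follows essentially the same route as the paper: both reduce everything to the operator identity $\sum_{k=0}^m (-1)^{m-k}\binom{m}{k}T^{*k}T^k=0$ via $\|T^kx\|^2=\langle T^{*k}T^kx,x\rangle$, and both close the only nontrivial implication, from the vanishing of the quadratic form to the vanishing of the operator, by observing that the operator is self-adjoint. The remaining implications are the same routine bookkeeping the paper dismisses as clear.
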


\begin{proof} Note that, for every $x\in H$,
$$
\sum_{k=0}^m (-1)^{m-k} {m\choose k}  \| T^k x \|^2 = \sum_{k=0}^m (-1)^{m-k} {m\choose k}  \langle T^k x , T^k x \rangle = \sum_{k=0}^m (-1)^{m-k} {m\choose k}  \langle  T^{*k} T^k x , x \rangle = 0 \; .
$$
Taking into account that the operator $\sum_{k=0}^m (-1)^{m-k} {m\choose k}  T^{*k} T^k $ is self adjoint, we obtain that the conditions (1) and (2) are equivalent. The other implications are clear.
\end{proof}

\;

\subsection{A perturbation result on Hilbert spaces}

\ \par

\;

Now apply Theorem \ref{ring perturbation}, about perturbation of arithmetic progressions by nilpotents  on rings, to $m$-isometries.

We use the following notation:
$$
\beta_{m-1} (T) := \sum_{i=0}^{m-1} (-1)^{m-1-i} {m-1 \choose i} T^{*i} T^i \; .
$$

\begin{teorema}\label{hs perturbation} \cite{bmmn}. Let $T \in L(H)$ be a strict $m$-isometry. Let $Q \in L(H)$ be an $n$-nilpotent operator such that $TQ=QT$. Then $T+Q$ is an $(2n+m-2)$-isometry. Moreover, is a strict $(2n+m-2)$-isometry if and only if $Q^{*n-1} \beta_{m-1} (T) Q^{n-1} \neq 0$.
\end{teorema}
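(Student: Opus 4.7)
The plan is to reduce the statement to the ring perturbation result, Theorem \ref{ring perturbation}, applied in $R = L(H)$. By Proposition \ref{pa m-isom}, $T$ being a strict $m$-isometry is equivalent to the sequence $(T^{*k}T^k)_{k \geq 0}$ being an arithmetic progression of strict order $h = m-1$ in $L(H)$. Take $x = T$, $y = T^*$, $a = Q$, $b = Q^*$. The commutation hypothesis $TQ=QT$ transfers to $T^*Q^* = Q^*T^*$ by adjunction; and $Q^*$ is also $n$-nilpotent because $(Q^*)^j = (Q^j)^*$ for every $j$, so $(Q^*)^n = 0$ but $(Q^*)^{n-1} \neq 0$. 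Thus all hypotheses of Theorem \ref{ring perturbation} hold, with both nilpotency indices equal to $n$ and with $h = m-1$.

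Applying Theorem \ref{ring perturbation} yields that
$$
\bigl((T^{*}+Q^{*})^k (T+Q)^k\bigr)_{k \geq 0} = \bigl((T+Q)^{*k}(T+Q)^k\bigr)_{k \geq 0}
$$
is an arithmetic progression in $L(H)$ of order $n + n + (m-1) - 2 = 2n + m - 3$. Invoking Proposition \ref{pa m-isom} once more, this is exactly the assertion that $T+Q$ is a $(2n+m-2)$-isometry. For the strictness criterion, note that with both nilpotency indices equal to $n$ the sufficient condition in Theorem \ref{ring perturbation} reduces to $Q^{*\,n-1}\, c_{m-1}\, Q^{n-1} \neq 0$, where $c_{m-1}$ is the leading coefficient of the polynomial $k \mapsto T^{*k}T^k$. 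By part (5) of Theorem \ref{a p strict} we have $c_{m-1} = \frac{1}{(m-1)!}\,\beta_{m-1}(T)$, so the condition is equivalent to $Q^{*\,n-1}\beta_{m-1}(T)Q^{n-1} \neq 0$. This immediately gives the \emph{if} direction of the strictness equivalence.

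The main obstacle is the \emph{only if} direction, since Theorem \ref{ring perturbation} is stated only as a sufficient condition. To close the gap, I would re-examine the explicit polynomial expansion carried out in the proof of Theorem \ref{ring perturbation}: when the two nilpotency indices coincide (both equal $n$ here), each summand of the expansion is of the form $p_{i,j,\ell}(k)\,u_{i,j,\ell}$ with $\deg p_{i,j,\ell} = i+j+\ell$, and the unique triple attaining the maximal degree $(n-1) + (n-1) + (m-1) = 2n+m-3$ is $(i,j,\ell) = (n-1, n-1, m-1)$. Consequently the coefficient of $k^{2n+m-3}$ in the polynomial representation of $(T+Q)^{*k}(T+Q)^k$ is, up to a non-zero scalar, exactly $Q^{*\,n-1}\beta_{m-1}(T)Q^{n-1}$. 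Thus the arithmetic progression has strict order $2n+m-3$ if and only if this operator is non-zero, which by Proposition \ref{pa m-isom} is equivalent to $T+Q$ being a strict $(2n+m-2)$-isometry.
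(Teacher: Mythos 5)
Your proof is correct and follows essentially the same route as the paper: apply Theorem \ref{ring perturbation} in $R=L(H)$ with $x=T$, $y=T^*$, $a=Q$, $b=Q^*$ (both $n$-nilpotent, commutation transferred by adjunction) and identify $c_{m-1}=\frac{1}{(m-1)!}\,\beta_{m-1}(T)$ via Theorem \ref{a p strict}. You are in fact more careful than the paper on the \emph{only if} half of the strictness claim: the paper's proof only cites the ``whenever'' (sufficient) direction of Theorem \ref{ring perturbation}, whereas your observation that $(i,j,\ell)=(n-1,n-1,m-1)$ is the unique triple of maximal degree in the expansion, so that the coefficient of $k^{2n+m-3}$ is a non-zero scalar multiple of $Q^{*\,n-1}\beta_{m-1}(T)Q^{n-1}$, is exactly what is needed to upgrade the sufficient condition to the stated equivalence.
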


\begin{proof} We apply Theorem \ref{ring perturbation} taking $y=T^*$, $x=T$, $b=Q^*$ and $a=Q$, being both $a$ and $b$ $n$-nilpotent. Then $((T+Q)^{*k} (T+Q)^{k})_{k \geq 0}$ is an arithmetic progression of order $n+n+m-3$. Consequently $T+Q$ is a $(2n+m-2)$-isometry. Moreover, $T+Q$ is a strict $(2n+m-2)$-isometry whenever $b^{n-1} c_{m-1} a^{n-1}  \neq 0$; that is
$$
Q^{*n-1} \beta_{m-1} (T)  Q^{n-1} = Q^{* n-1} \left( \sum_{i=0}^{m-1} (-1)^{m-1-i} {m-1 \choose i} T^{*i} T^i \right) Q^{n-1} \neq 0 \; ,
$$
since $c_{m-1} = \displaystyle{\frac{1}{(m-1)!} \beta_{m-1} (T)}$ by Theorem \ref{a p strict}.
Hence the proof is finished.
\end{proof}

\;

\subsection{$n$-invertible operators on Hilbert spaces}

\ \par

\;

Suppose that $S,T : H \longrightarrow H$ are operators on the Hilbert space $H$. We say that $S$ is a {\it left $n$-inverse} of $T$ if
\begin{equation}\label{inverse}
\sum_{k=0}^{n} (-1)^{n-k} {h \choose k}  S^k T^k = 0 \; .
\end{equation}
If equation (\ref{inverse}) holds, it is said that $T$ is a {\it right $n$-inverse} of $S$; equivalently $(S^{k}T^k)_{k \geq 0}$ is an arithmetic progression of order $n-1$ in the algebra $L(H)$. We say that $S$ is an {\it left strict $n$-inverse} of $T$ if the sequence $(S^{k}T^k)_{k \geq 0}$ is an arithmetic progression of strict order $n-1$ in $L(H)$; in this case we also say that $T$ is an {\it right strict $n$-inverse} of $S$. In \cite{duggal+muller} and \cite{sid ahmed inv} are studied the right and left $n$-inverses. They are related with $m$-isometries. Note that $T$ is an $m$-isometry whenever $T^*$ is a left $m$-inverse of $T$; that is, $T$ is an right $m$-inverse of $T^*$. Many results about $m$-isometries can be extended to $m$-inverses. We give a perturbation result similar to Theorem \ref{hs perturbation}.

We denote
$$
\beta_{n-1} (S,T) := \sum_{i=0}^{n-1} (-1)^{m-1-i} {m-1 \choose i} S^{i} T^i \; .
$$
In \cite{bmmn 2} is proved the following result.

\begin{teorema}\label{hs perturbation inv} Let $S \in L(H)$ be a left strict $n$-inverse of $T$. Let $P \in L(H)$ be an $h$-nilpotent and $Q \in L(H)$ be a $k$-nilpotent such that $SP=PS$ and $TQ=QT$. Then $S+P$ is a left $(n+h+k-2)$-inverse of $T+Q$. Moreover, it is a strict left $(n+h+k-2)$-inverse if and only $P^{h-1} S^{h-k} \beta_{n-1} (S,T) Q^{k-1} \neq 0$, if $k \leq h$, or whenever $P^{h-1}  \beta_{n-1} (S,T) T^{k-h} Q^{k-1} \neq 0$, if $h \leq k$.
\end{teorema}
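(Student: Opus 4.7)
The proof would be a direct application of Theorem \ref{ring perturbation} (the perturbation result for arithmetic progressions in rings) to the ring $R = L(H)$, essentially mimicking the strategy already used for Theorem \ref{hs perturbation}, but with $S$ playing the role that $T^{*}$ played there. First I would translate the hypothesis: by the definition of left strict $n$-inverse given at the end of the introduction, $S$ being a left strict $n$-inverse of $T$ is exactly the statement that $(S^k T^k)_{k \geq 0}$ is an arithmetic progression of strict order $n-1$ in $L(H)$. Writing its associated polynomial as $S^k T^k = \sum_{\ell = 0}^{n-1} c_\ell k^\ell$ with $c_{n-1} \neq 0$, part (5) of Theorem \ref{a p strict} then identifies $c_{n-1} = \frac{1}{(n-1)!} D^{n-1}(S^0 T^0) = \frac{1}{(n-1)!} \beta_{n-1}(S,T)$.

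Next I would set up the substitution for Theorem \ref{ring perturbation}: take $y := S$, $x := T$, $b := P$, $a := Q$ inside $R = L(H)$. All hypotheses are met: $(y^k x^k)_{k \geq 0}$ is an arithmetic progression of strict order $n-1$; $a = Q$ is $k$-nilpotent (so $n$ in the ring theorem equals our $k$); $b = P$ is $h$-nilpotent (so $m$ in the ring theorem equals our $h$); and the commutativity hypotheses $ax=xa$, $by=yb$ become $QT = TQ$ and $PS = SP$, which are given. The conclusion of Theorem \ref{ring perturbation} then says that
\[
\bigl((S+P)^k (T+Q)^k\bigr)_{k \geq 0}
\]
is an arithmetic progression of order $k + h + (n-1) - 2 = n + h + k - 3$ in $L(H)$, which, translating back via the definition, is exactly the statement that $S+P$ is a left $(n+h+k-2)$-inverse of $T+Q$.

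For the strictness part, I would read off the two cases of Theorem \ref{ring perturbation} under this substitution. Case 1 of that theorem ($m \leq n$ in the ring theorem, i.e.\ $h \leq k$ here) yields the nonvanishing condition $b^{m-1} y^{n-m} c_{n-1} a^{n-1} \neq 0$, which after the substitution and after replacing $c_{n-1}$ by a nonzero scalar multiple of $\beta_{n-1}(S,T)$ becomes $P^{h-1} S^{k-h} \beta_{n-1}(S,T) Q^{k-1} \neq 0$; Case 2 ($m > n$, i.e.\ $h > k$) yields $b^{m-1} c_{n-1} x^{m-n} a^{n-1} \neq 0$, i.e.\ $P^{h-1} \beta_{n-1}(S,T) T^{h-k} Q^{k-1} \neq 0$. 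These are precisely the two conditions stated.

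The only nontrivial obstacle is clerical: one must keep track of which factor sits to the left and which to the right of $\beta_{n-1}(S,T)$, and be consistent about which of $h, k$ is the nilpotency index of $P$ versus $Q$ when comparing with the $m,n$ of Theorem \ref{ring perturbation}. There is no new combinatorial content beyond what is already established there.
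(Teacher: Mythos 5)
Your approach is exactly the paper's: apply Theorem \ref{ring perturbation} in $R=L(H)$ with $y=S$, $x=T$, $b=P$, $a=Q$, identify $c_{n-1}=\frac{1}{(n-1)!}\beta_{n-1}(S,T)$ via Theorem \ref{a p strict}, and read off that $((S+P)^k(T+Q)^k)_{k\ge 0}$ is an arithmetic progression of order $k+h+(n-1)-2=n+h+k-3$, hence the left $(n+h+k-2)$-inverse conclusion. One point deserves attention, though. Your translation of the strictness conditions is the correct one: with $b=P$ being $h$-nilpotent and $a=Q$ being $k$-nilpotent, Case 1 of Theorem \ref{ring perturbation} (``$m\le n$'' there) is the case $h\le k$ here and gives $P^{h-1}S^{k-h}\beta_{n-1}(S,T)Q^{k-1}\neq 0$, while Case 2 ($h>k$) gives $P^{h-1}\beta_{n-1}(S,T)T^{h-k}Q^{k-1}\neq 0$. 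These are \emph{not} ``precisely the two conditions stated'': the statement (and the paper's own proof, which writes $b^{h-1}y^{h-k}c_{n-1}a^{k-1}$) attaches the $S$-power, with exponent $h-k$, to the case $k\le h$, and the $T$-power, with exponent $k-h$, to the case $h\le k$ --- i.e., the two cases are interchanged relative to what Theorem \ref{ring perturbation} actually yields under the substitution $m\mapsto h$, $n\mapsto k$. So your derivation is sound, and in fact it corrects a transcription slip in the printed statement, but your closing sentence asserting exact agreement with the statement is inaccurate and should be replaced by a remark flagging the discrepancy.
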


\begin{proof} As $S$ is a left strict $n$-inverse of $T$, we have that $(S^{k}T^k)_{k \geq 0}$ is an arithmetic progression of strict order $n-1$. In Theorem \ref{ring perturbation} we take $y=S$, $x=T$, $b=P$ and $a=Q$. Then the sequence  $((y+b)^k (x+a)^k)_{k \geq 0} = ((S+P)^k(T+Q)^k)_{k \geq 0}$ is an arithmetic progression of order $n+h+k-3$. Therefore $S+P$ is a left $(n+h+k-2)$-inverse of $T+Q$. Moreover it is strict whenever
$$
b^{h-1} y^{h-k} c_{n-1} a^{k-1} = P^{h-1} S^{h-k} \beta_{n-1}(S,T) Q^{k-1} \neq 0 \; ,
$$
if $k \leq h$, or whenever
$$
b^{h-1} c_{n-1} x^{k-h} a^{k-1} = P^{h-1}  \beta_{n-1}(S,T) T^{k-h} Q^{k-1} \neq 0 \; ,
$$
if $h \leq k$, since $c_{n-1} = \displaystyle{\frac{1}{(n-1)!} \beta_{n-1} (S,T)}$ by Theorem \ref{a p strict}.
\end{proof}

{\bf Acknowledgements:}
The first author is partially supported by grant of Ministerio  de Ciencia e Innovaci\'{o}n, Spain, proyect no. MTM2011-26538.

%%%%%%%%%%%%%%%%%%%%%%%%%%%%%%%%%%%%
%%%%%%%%%%%%%%%%%%%%%%%%%%%%%%%%%%%%
%%%%%%%%%%%%%%%%%%%%%%%%%%%%%%%%%%%%

%%%%%%%%%%%%%%%%%%%%%%%%%%%%%%%%%%%%

%%%%%%%%%%%%%%%%%%%%%%%%%%%%%%%%%%%%

\end{document}